\theoremstyle{plain}
\newtheorem{thm}{Theorem}[subsection]
\newtheorem{lem}[thm]{Lemma}
\newtheorem{prop}[thm]{Proposition}
\newtheorem{cor}[thm]{Corollary}
\theoremstyle{definition}
\newtheorem{dfn}[thm]{Definition}
\newtheorem{ex}[thm]{Example}
\theoremstyle{remark}
\newtheorem{rmk}{Remark}
\newcommand{\Hom}{\mathop{\mathrm{Hom}}\nolimits}
\newcommand{\End}{\mathop{\mathrm{End}}\nolimits}
\newcommand{\Aut}{\mathop{\mathrm{Aut}}\nolimits}
\newcommand{\Res}{\mathop{\mathrm{Res}}\nolimits}
\newcommand{\Stab}{\mathop{\mathrm{Stab}}\nolimits}
\newcommand{\reg}{\mathop{\mathrm{reg}}\nolimits}
\newcommand{\Ann}{\mathop{\mathrm{Ann}}\nolimits}
\newcommand{\re}[1]{\operatorname{Re}(#1)}
\newcommand{\im}[1]{\operatorname{Im}(#1)}
\newcommand*{\data}[2]{#1\hookrightarrow #2}
\newcommand{\Det}{\mathop{\mathrm{Det}}\nolimits}
\newcommand{\bs}{\backslash}
\newcommand{\dtimes}{d^{\times}\!}
\def\hsymb#1{\mbox{\strut\rlap{\smash{\Large$#1$}}\quad}}
\numberwithin{equation}{section}
\title{Relative Hecke's integral formula for an arbitrary extension of number fields}
\author{Hohto Bekki}
\date{}
\begin{document}
\maketitle
\begin{abstract}
In this article, we present a generalized Hecke's integral formula for an arbitrary extension $E/F$ of number fields. 
As an application, we present relative versions of the residue formula and Kronecker's limit formula for the ``relative" partial zeta function of $E/F$. 
This gives a simultaneous generalization of two different known results given by Hecke himself and Yamamoto. 
\end{abstract}

\section{Introduction}\label{intro}
Let $k$ be a number field (of finite degree), and $\mathcal O_k$ be the ring of integers of $k$. Let $\mathscr A$ be an ideal class of $k$, and let $\mathfrak a \in \mathscr A$. Then the partial zeta function $\zeta_k(\mathscr A, s)$ = $\zeta_k(\mathfrak a,s)$ associated to the ideal class $\mathscr A$ (or to the ideal $\mathfrak a$) is defined as,

\begin{equation}\label{partial zeta}
\zeta_k(\mathscr A,s):= \sum_{\substack{\mathfrak b \in \mathscr A\\ \mathfrak b \subset \mathcal O_k}}\frac{1}{N\mathfrak b^{s}} \quad(\re s >1).
\end{equation}

In the case where $k$ is real quadratic, the classical Hecke's integral formula expresses this partial zeta function as the integral of the real analytic Eisenstein series along the closed geodesic on the modular curve $SL_2(\mathbb Z) \backslash \mathfrak h$ associated to the ideal class $\mathscr A$, where $\mathfrak h :=\{ z\ \in \mathbb C \mid \im z >0\}$ is the Poincar\'e upper half plane. 

To be precise, let $k$ be real quadratic, and fix an embedding $k \hookrightarrow \mathbb R$. Suppose $\mathfrak a \in \mathscr A$ is taken to be of the form $\mathfrak a =\mathbb Z +\mathbb Z \alpha \subset k$, where $\alpha \in k$ is a real quadratic irrational. Let $\bar{\alpha}$ be the conjugate of $\alpha$ over $\mathbb Q$, and let $\varpi$ be the geodesic on $\mathfrak h$ connecting $\alpha$ and $\bar{\alpha}$:
\begin{equation}\label{closed geodesic}
\varpi : \mathbb R_{>0} \rightarrow \mathfrak h;~ t \mapsto \frac{\alpha ti+ \bar{\alpha}t^{-1}}{ti+t^{-1}}.
\end{equation}
Then it is known that $\varpi$ projected to $SL_2(\mathbb Z) \backslash \mathfrak h$ becomes periodic. More precisely, $\varpi$ induces a closed geodesic
\begin{equation}
\overline{\varpi} : \mathbb R_{>0}/ \varepsilon^{2\mathbb Z} \rightarrow SL_2(\mathbb Z)\backslash \mathfrak h,
\end{equation}
where $\varepsilon>1$ is a fundamental unit of $\mathcal O_k$ (cf. \cite{bekki17}, \cite{sarnak82}).

Now, let $E(z,s)$ be the real analytic Eisenstein series defined by
\begin{equation}\label{class eis ser}
E(z,s):= \frac{1}{2}\sum_{\substack{(c,d) \in \mathbb Z^2 \\ (c,d)=1}} \frac{ \im z^s}{|cz+d|^{2s}}, \quad\text{for $z \in \mathfrak h$ and $s \in \mathbb C, \re s>1$}, 
\end{equation}
which is well-defined on $SL_2(\mathbb Z)\backslash \mathfrak h$. Then we have the following Hecke's integral formula.

\begin{thm}[\cite{hecke17}]\label{classical hecke's int formula} We have
\begin{equation}
\int_{ \mathbb R_{>0}/ \varepsilon^{2\mathbb Z}} E(\overline{\varpi}(t),s) \frac{dt}{t} = \frac{1}{2}d_k^{s/2} \frac{\Gamma (s/2)^2}{\Gamma (s)} \frac{\zeta_k(\mathfrak a^{-1}, s)}{\zeta_{\mathbb Q}(2s)},
\end{equation}
where $d_k$ is the discriminant of $k$.
\end{thm}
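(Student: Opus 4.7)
The plan is to apply the standard unfolding technique. First I rewrite the Eisenstein series by absorbing the coprimality condition:
\begin{equation*}
E(z,s) = \frac{1}{2\zeta_{\mathbb Q}(2s)} \sum_{(c,d)\in\mathbb Z^2\setminus\{0\}} \frac{\im{z}^s}{|cz+d|^{2s}}.
\end{equation*}
A direct computation exploiting that $\alpha,\bar\alpha\in\mathbb R$ yields
\begin{equation*}
\im{\varpi(t)} = \frac{\alpha-\bar\alpha}{t^2+t^{-2}}, \qquad |c\varpi(t)+d|^2 = \frac{(c\alpha+d)^2 t^2 + (c\bar\alpha+d)^2 t^{-2}}{t^2+t^{-2}},
\end{equation*}
so the factors $t^2+t^{-2}$ cancel cleanly in $\im{\varpi(t)}^s/|c\varpi(t)+d|^{2s}$. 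Since $\mathfrak a = \mathbb Z + \mathbb Z\alpha$, the map $(c,d) \mapsto \beta := c\alpha + d$ is a bijection from $\mathbb Z^2\setminus\{0\}$ to $\mathfrak a\setminus\{0\}$, and substituting gives
\begin{equation*}
E(\overline{\varpi}(t),s) = \frac{(\alpha-\bar\alpha)^s}{2\zeta_{\mathbb Q}(2s)} \sum_{\beta \in \mathfrak a\setminus\{0\}} \frac{1}{(\beta^2 t^2 + \bar\beta^2 t^{-2})^s}.
\end{equation*}

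The crux of the unfolding is that $t \mapsto \varepsilon t$ in $\beta^2 t^2 + \bar\beta^2 t^{-2}$ has the same effect as $\beta \mapsto \varepsilon\beta$ (since $\bar\varepsilon = \pm \varepsilon^{-1}$, with the sign squared). Thus $\varepsilon^{2\mathbb Z}$ acts compatibly on both sides, and absolute convergence for $\re{s}>1$ permits the unfolding
\begin{equation*}
\int_{\mathbb R_{>0}/\varepsilon^{2\mathbb Z}} E(\overline{\varpi}(t),s) \frac{dt}{t}
= \frac{(\alpha-\bar\alpha)^s}{2\zeta_{\mathbb Q}(2s)} \sum_{\beta \in (\mathfrak a\setminus\{0\})/\varepsilon^{2\mathbb Z}} \int_{0}^{\infty} \frac{dt/t}{(\beta^2 t^2 + \bar\beta^2 t^{-2})^s}.
\end{equation*}
The inner integral is evaluated by the substitution $u = (\beta/\bar\beta)^2 t^4$, reducing to a Beta integral of value $\frac{1}{4|N\beta|^s} B(s/2,s/2) = \frac{\Gamma(s/2)^2}{4\Gamma(s)|N\beta|^s}$.

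To finish, the index $[\mathcal O_k^\times : \varepsilon^{2\mathbb Z}]=4$ (with cosets represented by $\pm 1, \pm\varepsilon$) together with $|N|=1$ for units gives $\sum_{(\mathfrak a\setminus\{0\})/\varepsilon^{2\mathbb Z}} |N\beta|^{-s} = 4 \sum_{(\mathfrak a\setminus\{0\})/\mathcal O_k^\times} |N\beta|^{-s}$, while the standard bijection $\beta\mathcal O_k^\times \mapsto (\beta)\mathfrak a^{-1}$ between the latter quotient and the integral ideals in $\mathscr A^{-1}$ (with norm $|N\beta|/N\mathfrak a$) yields $\sum |N\beta|^{-s} = (N\mathfrak a)^{-s}\zeta_k(\mathfrak a^{-1},s)$. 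Multiplying everything, the two factors of $4$ cancel and the prefactor $(\alpha-\bar\alpha)^s/(N\mathfrak a)^s$ becomes $d_k^{s/2}$ via the discriminant identity $(\alpha-\bar\alpha)^2 = d_k(N\mathfrak a)^2$. The main obstacle I anticipate is not conceptual but bookkeeping: carefully handling the cases $N\varepsilon = \pm 1$ and the sign of $\alpha - \bar\alpha$ so that all factors of $2$, $4$, and square roots combine correctly, and justifying the interchange of sum and integral. The Beta integral and the discriminant identity themselves are routine.
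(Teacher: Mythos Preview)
Your proof is correct and follows essentially the same unfolding strategy as the paper's general argument (Theorem~\ref{crude int formula}), of which this classical statement is the special case $F=\mathbb Q$, $n=2$. The only cosmetic difference is that you absorb the coprimality condition into $\zeta_{\mathbb Q}(2s)$ at the outset (effectively working with $E_L$), whereas the paper keeps $E_{\data{\mathfrak a}{L}}$ throughout and invokes the relation $E_{L,[\mathfrak a]}=\zeta_F(\mathfrak a^{-1},ns)(N\mathfrak a)^{-ns}E_{\data{\mathfrak a}{L}}$ only at the end; your explicit Beta integral is exactly the gamma factor $\Gamma_{k/\mathbb Q}(s)$ computed abstractly in Lemma~\ref{lem gamma factor}.
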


Many authors including Hecke himself have studied generalizations of this formula. Hecke~\cite{hecke17} generalizes the formula to the case of an arbitrary number field, and Hiroe-Oda~\cite{hiroeoda08} extend Hecke's result to $L$-functions twisted by Grossencharacters. Another generalization is obtained by Yamamoto~\cite{yamamoto08}, who generalizes the formula to the case of an arbitrary quadratic extension $E/F$ of number fields. 
In the following we refer to any generalization of Theorem \ref{classical hecke's int formula} as Hecke's integral formula.

In a previous paper \cite{bekki17}, motivated by the analogy between the above periodicity of the geodesic $\varpi$ and the classical Lagrange's theorem in the theory of continued fractions, we  have considered generalization of closed geodesics in the symmetric space for $GL_n$. As a result, we have established some new geodesic multi-dimensional continued fraction algorithms, and have proved generalizations of Lagrange's theorem. In this paper, using the same idea as in \cite{bekki17}, we present Hecke's integral formula for an arbitrary extension $E/F$ of number fields (Theorem \ref{hecke's int formula}). Hecke's result corresponds to the case where $F=\mathbb Q$, and Yamamoto's result corresponds to the case where $[E:F]=2$.
In our argument, we are naturally led to introduce a ``relative" partial zeta function $\zeta_{E/F, \mathscr A}(A, s)$  ($\mathscr A \in Cl_F, A \in Cl_E$) (Definition \ref{relative partial zeta}), which gives a ``decomposition of the partial zeta function $\zeta_E(A,s)$ along $Cl_F$'':
\begin{align}
\zeta_E(A, s)= \sum_{\mathscr A \in Cl_F} \zeta_{E/F, \mathscr A}(A, s).
\end{align}  
As an application of our Hecke's integral formula, we obtain the residue formula and Kronecker's limit formula for this relative partial zeta function $\zeta_{E/F, \mathscr A}(A, s)$, that is, formulas for the residue and the constant term of $\zeta_{E/F, \mathscr A}(A, s)$ at $s=1$ (Theorem \ref{rel partial zeta thm}). 
The author thinks it is interesting that both the special value of $\zeta_E(A,s)$ at $s=1$ and the special value of $\zeta_F(\mathscr A^{-1},s)$ {\bf at} {\boldmath $s=n$} appear simultaneously in the residue formula (\ref{rel zeta res formula}) of the relative partial zeta function $\zeta_{E/F, \mathscr A}(A,s)$. As far as the author is aware, such a phenomenon has not been observed in the previous works. 

There are also many preceding works on Kronecker's limit formula for the zeta functions of number fields. Hecke remarks in \cite{hecke17} that one can deduce Kronecker's limit formula for general number fields from the result of Epstein~\cite[p.~644]{epstein03}. Liu and Masri~\cite{liumasri15} use this formula for totally real fields to obtain an analogue of Kronecker's solution of Pell's equation. Bump and Goldfeld~\cite{bumpgoldfeld84} give a different proof in the case of totally real cubic fields. The case of relative quadratic extensions of number fields is obtained by Yamamoto~\cite{yamamoto08}, and our result generalizes all of these results.

\paragraph{Outline of this paper}
In Section \ref{setting}, we set up generalizations of the Poincar\'e upper half plane $\mathfrak h$, the above geodesic $\varpi$ on $\mathfrak h$, and the real analytic Eisenstein series $E(z,s)$. We use the symmetric space for $\Res_{F/\mathbb Q}GL_n$ as a generalization of $\mathfrak h$ where $F$ is  a number field (cf. Borel~\cite{borel77}), and define a certain totally geodesic submanifold called the Heegner object in the symmetric space which plays a part of the above geodesic $\varpi$. Then we consider the Eisenstein series for $\Res_{F/\mathbb Q}GL_n$ following the general construction of Langlands~\cite{langlands76}. 
In Section \ref{integral formula}, we prove our first main theorem: Theorem \ref{hecke's int formula}. 

Sections \ref{fourier exp} and \ref{res and limit formula} are devoted to prove the residue formula and Kronecker's limit formula for the relative partial zeta function $\zeta_{E/F, \mathscr A}(A, s)$.
In Section \ref{fourier exp}, we compute the Fourier expansion of the Eisenstein series as a preparation.
In Section \ref{res and limit formula}, we first prove the residue formula and Kronecker's limit formula for the Eisenstein series using the Fourier expansion. Then using Theorem \ref{hecke's int formula}, we obtain our second main theorem: Theorem \ref{rel partial zeta thm}.


\paragraph{Some remarks on Eisenstein series}
In this paper, we construct Eisenstein series by generalizing the argument of Goldfeld~\cite{goldfeld06}. This construction fits into the general theory of Eisenstein series established by Langlands~\cite{langlands76}. 
Therefore some of the results in this paper concerning the Eisenstein series (e.g., the convergence, analytic continuation, and some part of Fourier coefficients) might be obtained directly from the general theory. However, as far as the author is aware, our Eisenstein series $E_{L, \mathscr A}(z,s)$ ($\mathscr A \in Cl_F$) (Definition \ref{var eis ser}) has not been studied well, while the sum $E_{L}(z,s) = \sum_{\mathscr A \in Cl_F}E_{L, \mathscr A}(z,s)$ is a generalization of those traditional Eisenstein series studied by Asai~\cite{asai70}, Jorgenson-Lang~\cite{jorgenson-lang99}, Yoshida~\cite{yoshida03}, and Yamamoto~\cite{yamamoto08} in the case where $n=2$. 
In Sections \ref{fourier exp} and \ref{res and limit formula}, we give an explicit Fourier expansion formula, residue formula, and Kronecker's limit formula for our Eisenstein series $E_{L, \mathscr A}(z,s)$, which gives a generalization of the corresponding results proved in \cite{asai70}, \cite{jorgenson-lang99}, \cite{yamamoto08} and \cite{liumasri15}.


\subsection{Convention}\label{sect convention}

Let $\Lambda$ be any index set, and let $\{X_{\lambda}\}_{\lambda \in \Lambda}$ be any family of sets indexed by $\Lambda$. For $x \in \prod_{\lambda \in \Lambda}X_{\lambda}$, we often denote by $x_{\lambda} \in X_{\lambda}$ the $\lambda$-component of $x$ without specifying.

In this paper, a number field is always assumed to be of finite degree over the field $\mathbb Q$ of rational numbers. 
For a number field $k$, we denote by $S_k$ the set of archimedean places of $k$. For $\sigma \in S_k$, we denote by $k_{\sigma}$ the completion of $k$ at $\sigma$, and denote by $n_{\sigma}:=[k_{\sigma}:\mathbb R]$ the local extension degree. We also denote by $\sigma :k \hookrightarrow k_{\sigma}$ the completion map. We denote by $k_{\infty}:= k \otimes_{\mathbb Q}\mathbb R$ the infinite adele of $k$. Then we have $k_{\infty} \simeq \prod_{\sigma \in S_k}k_{\sigma}$. The number of archimedean (resp. real, complex) places is denoted by $r_k$ (resp. $r_1(k), r_2(k)$). 
As usual, we denote by $\mathcal O_k$ the ring of integers, $\mathcal O_k^{\times}$ its group of units, $Cl_k$ the ideal class group, and $h_k$ the class number. 
We denote by $d_k \in \mathbb Z$ the absolute discriminant and by $\mathfrak d_k$ the different ideal. 
For a fractional $\mathcal O_k$-ideal $\mathfrak a \subset k$, we denote by $[\mathfrak a] \in Cl_k$ the ideal class of $\mathfrak a$. 

We equip $\mathbb C$ (resp. $\mathbb C^n$) with the following normalized absolute value $|~~|$ (resp. $||~~||$),
 \begin{align}
&~|~~|: \mathbb C \rightarrow \mathbb R_{\geq 0};~ x+iy \mapsto (x^2+y^2)^{1/2} \quad( x,y \in \mathbb R),\\
&||~~||: \mathbb C^n \rightarrow \mathbb R_{\geq 0};~ (x_1, \dots, x_n) \mapsto (|x_1|^2+\dots +|x_n|^2)^{1/2}.\label{abs val}
\end{align}
For $\sigma \in S_k$, let us choose an embedding $k_{\sigma} \hookrightarrow \mathbb C$. Then the above absolute value induces an absolute value $|~~|$ (resp. $||~||$) on $k_{\sigma}$ (resp. $k_{\sigma}^n$). This is clearly independent of the chosen embedding. 
For $r \geq 1$, we define the pairing $\braket{~,~}: k_{\infty}^{r} \times k_{\infty}^{r} \rightarrow \mathbb R$ by 
\begin{equation}
\braket{(x_1, \dots, x_{r}),(y_1, \dots, y_{r})}:= \sum_{i=1}^{r} Tr_{k/\mathbb Q}(x_iy_i), \quad~(x_i, y_i \in k_{\infty}).
\end{equation}
Here $Tr_{k/\mathbb Q}: k_{\infty} \rightarrow \mathbb R$ is the field trace map naturally extended to $k_{\infty}$.



Suppose now $k'/k$ is an extension of number fields. For $\sigma \in S_k$, we denote by $S_{k', \sigma}$ the set of places of $k'$ above $\sigma$. Then for $\sigma \in S_k$ and $\tau \in S_{k',\sigma}$, we fix an embedding $k_{\sigma} \subset k'_{\tau}$, and denote by $n_{\tau|\sigma}:=[k'_{\tau}: k_{\sigma}]$ the degree of the local filed extension. 

In the following, we basically fix an extension $E/F$ of number fields in our argument. However, when we introduce general notation, we use $k$ or $k'/k$ to represent an arbitrary number field or extension of number fields.





\section{Generalized upper half space over $F$}\label{setting}

In order to generalize closed geodesics on the modular curve $SL_2(\mathbb Z)\backslash \mathfrak h$, first observe that the geodesic $\varpi$ (in Section \ref{intro}) on $\mathfrak h$ connecting conjugate real quadratic irrationals $\alpha$ and $\bar{\alpha}$ is obtained by the left translation of the imaginary axis $I:=i \mathbb R_{>0} \subset \mathfrak h$ by the matrix 
$W = \small \left(
\begin{array}{cc}
 \alpha & \bar{\alpha} \\
 1 & 1
\end{array}
\right) \in GL_2(\mathbb{R})$. See Figure \ref{figure} below.

\if0
\begin{figure}[hbtp]
\begin{center}
\vspace{-2cm}
    \includegraphics[clip,height=14cm,angle=90]{./closedgeodesic.ps} 
    \vspace{-2cm}
    \caption{Closed geodesic $\varpi$}\label{figure}
\end{center}
\end{figure}
\vspace{0cm}
\fi

\begin{figure}[hbtp]
\begin{center}
\vspace{-2cm}
    \includegraphics[clip,height=12cm,angle=90]{./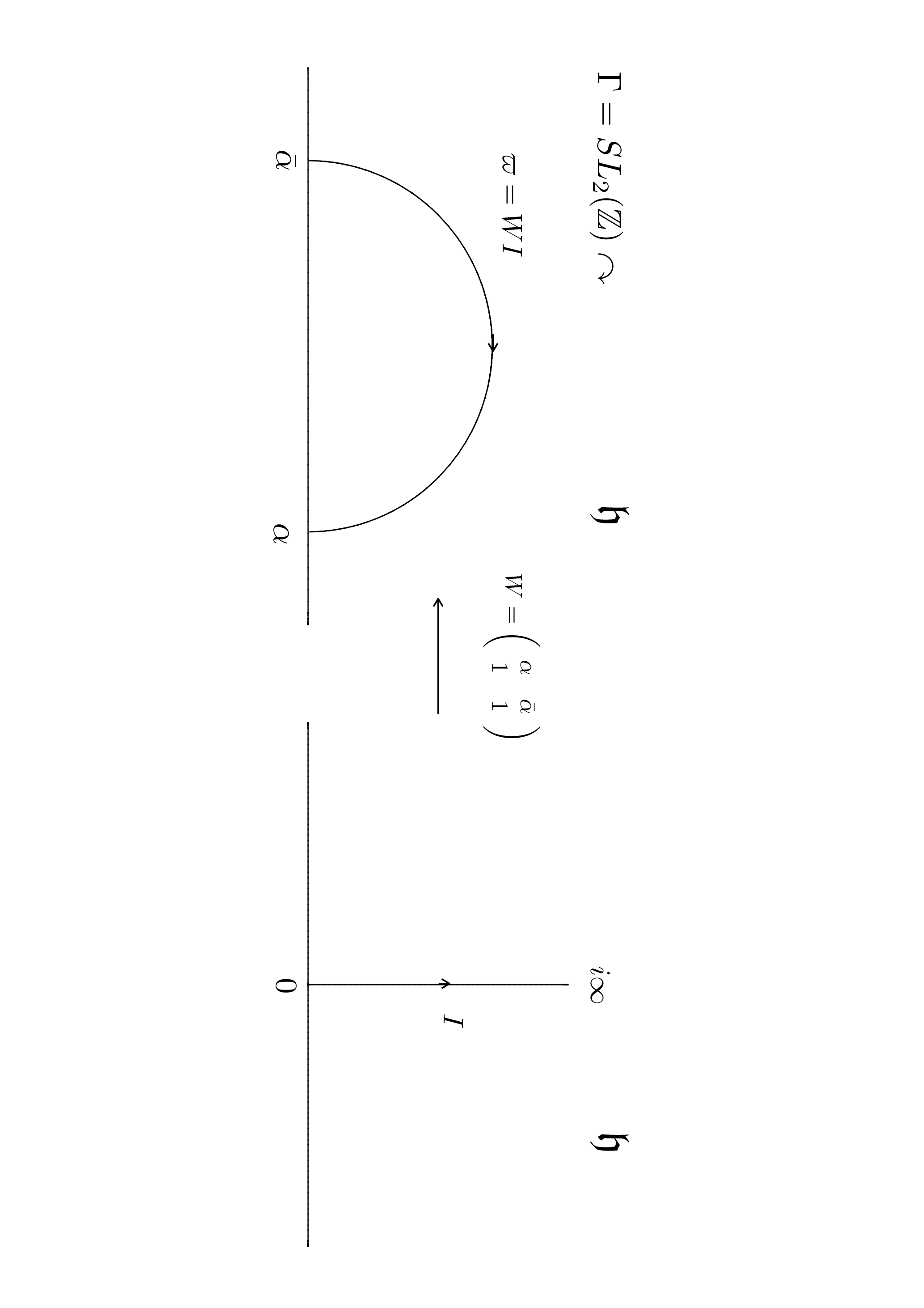} 
    \vspace{-2cm}
    \caption{The geodesic $\varpi$}\label{figure}
\end{center}
\end{figure}
In the following, we generalize each of these objects $\mathfrak h$, $W$, $I$, $\varpi$, and $SL_2(\mathbb Z)$.

Let $F$ be a number field of degree $d$. We fix an embedding $F_{\sigma} \subset \mathbb C$ for each $\sigma \in S_F$. Let us consider an algebraic group $G:= \Res_{F/\mathbb Q}GL_n$, where $\Res_{F/\mathbb Q}$ is the Weil restriction. Then we take a standard maximal compact subgroup $K=\prod_{\sigma \in S_F}K_{\sigma}$ of $G(\mathbb R)\simeq \prod_{\sigma \in S_F} GL_n(F_{\sigma})$ as follows: 
\begin{equation}
K_{\sigma}:=O(n) \subset GL_n(F_{\sigma})=GL_n(\mathbb R), \quad \text{if $\sigma$ is real,}
\end{equation}
\begin{equation}
K_{\sigma}:=U(n) \subset GL_n(F_{\sigma})=GL_n(\mathbb C), \quad \text{if $\sigma$ is complex}.
\end{equation}

\begin{dfn}[Generalized upper half space over $F$, Cf.~\cite{borel77}, \cite{goldfeld06}]
Set 
\begin{equation}
\mathfrak h^n_{\sigma} := GL_n(F_{\sigma})/F_{\sigma}^{\times}K_{\sigma}, \quad \text{for $\sigma \in S_F$.}
\end{equation}
Then we define the generalized upper half space for $G$ to be the symmetric space
\begin{equation}
\mathfrak h^n_F := G(\mathbb R)/F_{\infty}^{\times}K\simeq \prod_{\sigma \in S_F} \mathfrak h^n_{\sigma}.
\end{equation}
In the following, for $g \in G(\mathbb R)=\prod_{\sigma \in S_F}GL_n(F_{\sigma})$, we always denote by $g_{\sigma}$ the $\sigma$-component of $g$, and denote by $[g] \in \mathfrak h^n_F$ the class represented by $g$.
\end{dfn}

\begin{rmk}
In the case where $F=\mathbb Q$, $n=2$, we have an isometry $\mathfrak h^2_{\mathbb Q} \stackrel{\sim}{\rightarrow} \mathfrak h;~ [g] \mapsto g i$, where the action of $g$ on $i \in \mathfrak h$ is the usual linear fractional transformation.
\end{rmk}

\subsection{Heegner objects}\label{heeg obj}
In this section, we define a certain totally geodesic submanifold of $\mathfrak h^n_F$ called the Heegner object. We closely follow the construction in \cite[Section 2]{bekki17}, but have slightly modified the argument in order to deal with the ideal class group of $F$.

Let $F$ be the same as above, and let $E/F$ be a field extension of degree $n$. For $\sigma \in S_F$ and $\tau \in S_{E,\sigma}$, we fix an embedding $E_{\tau} \subset \mathbb C$ so that $F_{\sigma} \subset E_{\tau} \subset \mathbb C$.
Let us fix a basis $w_1, \dots, w_n \in E$ of $E$ over $F$. Set $w:= {}^t\!(w_1 \cdots w_n ) \in E^n$. First, we define two isomorphisms $E_{\infty} \simeq F_{\infty}^n$ of $F_{\infty}$-modules using local and global data on $E_{\infty}$.

\underline{\bf local}: For $\sigma \in S_F$ and $\tau \in S_{E, \sigma}$, we fix an isomorphism $E_{\tau} \simeq F_{\sigma}^{n_{\tau|\sigma}}$ of $F_{\sigma}$-vector spaces as follows: If $n_{\tau|\sigma}=1$, then simply $E_{\tau}=F_{\sigma}$. Otherwise, $\sigma$ is real and $\tau$ is complex, and $n_{\tau|\sigma}=2$. Therefore using $\mathbb C \simeq \mathbb R^2; x+iy \mapsto (y,x)$, we obtain $E_{\tau} = \mathbb C \simeq \mathbb R^2 = F_{\sigma}^2$. This induces
\begin{equation}
E_{\sigma}:= E \otimes_F F_{\sigma} \simeq \prod_{\tau | \sigma}E_{\tau} \simeq \prod_{\tau|\sigma}F_{\sigma}^{n_{\tau|\sigma}} \simeq F_{\sigma}^n,
\end{equation}
and by taking the product over $\sigma \in S_F$, we get
\begin{equation}
\iota: E_{\infty} \stackrel{\sim}{\rightarrow} F_{\infty}^n.
\end{equation}

\underline{\bf global}: Since $w_1,\dots,w_n$ is a basis of $E$ over $F$, we have an isomorphism $w: F^n \stackrel{\sim}{\rightarrow} E; x \mapsto x\cdot w$, of $F$-vector spaces. Here we regard $x$ as a row vector, and $x \cdot w$ is the scalar product. Thus by tensoring $\mathbb R$ over $\mathbb Q$, we obtain
\begin{equation}\label{isom w}
w: F_{\infty}^n \stackrel{\sim}{\rightarrow} E_{\infty}.
\end{equation}
Then we define $W \in G(\mathbb R)$ so that $W=\iota \circ w$ in $\End(F_{\infty}^n)$, that is, 
\begin{equation}
\iota \circ w:  F_{\infty}^n \stackrel{\sim}{\rightarrow}  E_{\infty} \stackrel{\sim}{\rightarrow} F_{\infty}^n;~ x \mapsto xW.
\end{equation}

Next, we generalize $I$. For a number field $k$, we set $T_k:= \prod_{\sigma \in S_k} \mathbb R_{>0}$. Then $T_k$ acts naturally on $k_{\infty} \simeq \prod_{\sigma \in S_k} k_{\sigma}$ by the component-wise multiplication. For an extension $k'/k$ of number fields, the field norm map $N_{k'/k}: k' \rightarrow k$ induces a homomorphism,
\begin{equation}
N_{k'/k}: T_{k'} \rightarrow T_k ;~ (t_{\tau})_{\tau \in S_{k'}} \mapsto \Big(\prod_{\tau|\sigma}t_{\tau}^{n_{\tau|\sigma}}\Big)_{\sigma \in S_k}.
\end{equation}
We denote by $T_{k'/k} := \ker(T_{k'} \stackrel{N_{k'/k}}{\rightarrow}T_k)$ the kernel of this norm homomorphism.

Then the action of $T_{E/F}$ on $F_{\infty}^{n}$ via $E_{\infty} \stackrel{\iota}{\simeq}F_{\infty}^n$, which is clearly as $F_{\infty}$-module, induces a group homomorphism
\begin{equation}
I=I_{E/F}: T_{E/F} \rightarrow G(\mathbb R)=GL_n(F_{\infty}).
\end{equation}

\begin{dfn}[Cf.~\cite{bekki17}]\label{heeg obj dfn}
We define the Heegner object associated to the basis $w$ of $E$ over $F$ by
\begin{equation}
\varpi=\varpi_w : T_{E/F} \rightarrow \mathfrak h_F^n ;~ t \mapsto [WI(t)],
\end{equation}
where $[WI(t)]$ denotes the class of $WI(t)$ in $\mathfrak h_F^n$ as remarked before.
\end{dfn}

\begin{rmk}
If $F=\mathbb Q$ and $E$ is imaginary quadratic, then the image of $\varpi_w$ is just a Heegner point on $\mathfrak h=\mathfrak h_{\mathbb Q}^2$.
\end{rmk}

\paragraph{Arithmetic subgroup and periodicity}
We define an arithmetic subgroup $\Gamma$ of $G(\mathbb R)$ and discuss the periodicity of the Heegner object $\varpi$ with respect to $\Gamma$.
 
Let $L \subset F^n$ be an $\mathcal O_F$-lattice, that is, an $\mathcal O_F$-submodule such that $L\otimes_{\mathcal O_F}F = F^n$. Consider the natural right action of $SL_n(F)$ on the space $F^n$ of row vectors. Define
\begin{equation}\label{gamma}
\Gamma_L:= \Stab_{SL_n(F)}(L)=\{\gamma \in SL_n(F) \mid L\gamma =L \},
\end{equation}
where ``$\Stab$" is the stabilizer subgroup. Then $\Gamma_L$ acts properly discontinuously on $\mathfrak h_F^n$ from the left. 

Now let $\mathfrak A \subset E$ be a fractional $\mathcal O_E$-ideal. We take $L \subset F^n$ so that $L$ corresponds to $\mathfrak A$ under the isomorphism (\ref{isom w}) (i.e., $w : L \stackrel{\sim}{\rightarrow} \mathfrak A$), and set 
\begin{equation}
\Gamma =\Gamma_{\mathfrak A} := \Gamma_L.
\end{equation}

For a number field $k$, we denote by $U_k$ the image of the unit group $\mathcal O_k^{\times}$ under the following ``multiplicative" regulator map:
\begin{equation}\label{abs regulator}
\reg^{\times}_{k}: \mathcal O_{k}^{\times} \rightarrow T_{k} ;~ u \mapsto (|\sigma (u)|)_{\sigma \in S_k}.
\end{equation}
Then, by Dirichlet's unit theorem, $U_k$ is a lattice in $T_{k/\mathbb Q} \subset T_k$, that is, a discrete cocompact subgroup of $T_{k/\mathbb Q}$. 

Now, for an extension $k'/k$ of number fields, let $\mathcal O_{k'/k}^{\times} := \ker (\mathcal O_{k'}^{\times} \stackrel{N_{k'/k}}{\rightarrow} \mathcal O_k^{\times})$ be the relative unit group of $k'/k$. 
We denote by $U_{k'/k}$ the image of $\mathcal O_{k'/k}^{\times}$ under the following relative regulator map $\reg^{\times}_{k'/k}$:
\begin{equation}\label{regulator}
\reg^{\times}_{k'/k}:= \reg^{\times}_{k'}|_{\mathcal O_{k'/k}^{\times}}: \mathcal O_{k'/k}^{\times} \rightarrow T_{k'/k},
\end{equation}
which is just the restriction of $\reg^{\times}_{k'}$.
Then we easily see that $U_{k'/k}$ is a lattice in $T_{k'/k}$.

\if0
Then the regulator map $\reg^{\times}_{k'}$ induces 
\begin{equation}\label{regulator}
\reg^{\times}_{k'/k}:= \reg^{\times}_{k'}|_{\mathcal O_{k'/k}^{\times}}: \mathcal O_{k'/k}^{\times} \rightarrow T_{k'/k}.
\end{equation}
We denote by $U_{k'/k}$ the image of this map. 

Then, the image of $\mathcal O_{k'/k}^{\times}$ under the regulator map $\reg^{\times}_{k'}$ induces
\begin{equation}\label{regulator}
\reg^{\times}_{k'}|_{\mathcal O_{k'/k}^{\times}}: \mathcal O_{k'/k}^{\times} \rightarrow T_{k'/k} ;~ u \mapsto (|\tau (u)|)_{\tau \in S_k}.
\end{equation}
We denote by $U_{k'/k}$ the image of $\mathcal O_{k'/k}^{\times}$ in $T_{k'/k}$ under 
\fi

Let $E/F$ be as before, and let $\pi : \mathfrak h_F^n \rightarrow \Gamma \backslash \mathfrak h_F^n$ be the natural projection.
\begin{prop}\label{periodicity}
The map $T_{E/F} \overset{\varpi}{\rightarrow} \mathfrak h_F^n \overset{\pi}{\rightarrow} \Gamma \backslash \mathfrak h_F^n$ factors through 
\begin{equation}
 \overline{\varpi} :T_{E/F}/U_{E/F} \rightarrow \Gamma \backslash \mathfrak h_F^n.
\end{equation} 
\end{prop}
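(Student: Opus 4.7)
The plan is to show that for every $\epsilon \in U_{E/F}$ and every $t \in T_{E/F}$, the points $\varpi(t)$ and $\varpi(t\epsilon)$ lie in the same $\Gamma$-orbit in $\mathfrak h_F^n$. Lift $\epsilon$ to a relative unit $u \in \mathcal O_{E/F}^\times$ with $\reg^\times_{E/F}(u) = \epsilon$, and let $M_u \in M_n(F)$ be the matrix representing multiplication by $u$ on $E$ in the basis $w_1, \ldots, w_n$ (acting on row vectors). Since $\det M_u = N_{E/F}(u) = 1$ we have $M_u \in SL_n(F)$, and since $u\mathfrak A = \mathfrak A$ the identification $w : L \stackrel{\sim}{\to} \mathfrak A$ gives $L M_u = L$, so $M_u \in \Gamma_L = \Gamma$. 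We will verify that $\gamma := M_u^{-1} \in \Gamma$ satisfies $\gamma \cdot \varpi(t\epsilon) = \varpi(t)$ in $\mathfrak h_F^n$.

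The key computational input is the identity $W^{-1} M_u W = I(\epsilon) \cdot \kappa_u$ in $G(\mathbb R)$, arising from the ``polar'' decomposition of the multiplication map $m_u$ on $E_\infty \simeq \prod_\tau E_\tau$: the radial factor (multiplication by $|\tau(u)|$ on $E_\tau$) is by definition of $I$ transported via $\iota$ to $I(\epsilon)$, while the angular factor $k_u$ (multiplication by $\tau(u)/|\tau(u)|$) gives $\kappa_u := \iota \circ k_u \circ \iota^{-1}$. Two facts then finish the proof. First, $\kappa_u \in K$: when $n_{\tau|\sigma}=1$, the contribution on the $F_\sigma$-factor is multiplication by $\pm 1$ or by a unit complex number, which lies in $K_\sigma$; when $n_{\tau|\sigma}=2$ (so $\tau$ is complex over the real place $\sigma$), the fixed isomorphism $\mathbb C \simeq \mathbb R^2;\, x+iy \mapsto (y,x)$ converts multiplication by $e^{i\theta}$ into a rotation matrix in $O(2) = K_\sigma$. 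Second, $W^{-1} M_u W$ commutes with $I(t)$: in the block decomposition $F_\infty^n \simeq \prod_\tau F_\sigma^{n_{\tau|\sigma}}$ both elements are block diagonal, and $I(t)$ acts on the $\tau$-block as the scalar $t_\tau \cdot \mathrm{Id}_{n_{\tau|\sigma}}$, which commutes with the corresponding block of $W^{-1}M_u W$.

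Combining these ingredients, and using that $I$ is a group homomorphism so that $I(t\epsilon) = I(t)I(\epsilon)$, one computes
\[
(WI(t))^{-1} M_u^{-1} \, WI(t\epsilon) \;=\; I(t)^{-1}(W^{-1} M_u^{-1} W) I(t) \cdot I(\epsilon) \;=\; (I(\epsilon)\kappa_u)^{-1} I(\epsilon) \;=\; \kappa_u^{-1} \;\in\; K.
\]
Hence $M_u^{-1} \cdot W I(t\epsilon) \in W I(t) \cdot K \subset W I(t) \cdot F_\infty^\times K$, which means $\gamma \cdot \varpi(t\epsilon) = \varpi(t)$ in $\mathfrak h_F^n$. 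Projecting to $\Gamma \backslash \mathfrak h_F^n$ gives $\pi(\varpi(t\epsilon)) = \pi(\varpi(t))$, and since $\epsilon \in U_{E/F}$ was arbitrary, the map $\pi \circ \varpi$ factors through $T_{E/F}/U_{E/F}$.

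The main obstacle is verifying that the angular part $\kappa_u$ genuinely lands in the maximal compact subgroup $K$, rather than being merely invertible: this forces one to track carefully how the specific normalization in $\iota$ (in particular the isomorphism $\mathbb C \simeq \mathbb R^2;\, x+iy \mapsto (y,x)$) turns multiplication by a unit complex number into an orthogonal transformation. Once this local check is in place, the global computation is essentially forced by the commutativity of the abelian group $T_E$.
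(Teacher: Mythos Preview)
Your proof is correct and follows essentially the same approach as the paper: both lift $\epsilon$ to $u\in\mathcal O_{E/F}^\times$, take $\gamma$ to be (the inverse of) the regular representation matrix $\varrho_w(u)$, verify it lies in $\Gamma$ via $N_{E/F}(u)=1$ and $u\mathfrak A=\mathfrak A$, and use the polar decomposition $W^{-1}\varrho_w(u)W=I(\epsilon)\kappa_u$ with $\kappa_u\in K$ (this is exactly the paper's Lemma~\ref{lem psi}). The one place where you are more explicit than the paper is in noting that $W^{-1}M_uW$ commutes with $I(t)$ because $I(t)$ is scalar on each $\tau$-block; the paper uses this implicitly when it writes $[WI(\rho t)]=[\gamma WI(t)]$ without comment.
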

In order to prove this proposition, we prepare a lemma. Let 
\begin{equation}
\varrho_w: E^{\times} \rightarrow \Aut_F(E) \simeq GL_n(F) \subset G(\mathbb R)
\end{equation}
be the regular representation of $E$ over $F$ with respect to the basis $w$ of $E$ over $F$, that is, for any row vector $x \in F^n$, we have $x\varrho_w(\alpha)w = \alpha xw$ in $E$. 

\begin{lem}\label{lem psi}
For $u \in \mathcal O_{E/F}^{\times}$, we have $W^{-1}\varrho_w(u)W \equiv I(\reg^{\times}_{E/F}(u)) \mod K$.
\end{lem}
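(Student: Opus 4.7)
The plan is to identify $W^{-1}\varrho_w(u)W$ with the matrix of multiplication by $u$ on $E_\infty$ under the identification $E_\infty \stackrel{\sim}{\rightarrow} F_\infty^n$ via $\iota$, and then decompose this matrix archimedean place-by-place to read off the scalar and orthogonal/unitary pieces.

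For the identification, since $W = \iota \circ w$ as an endomorphism of $F_\infty^n$ and $\varrho_w(u)$ is characterized by $x\varrho_w(u)w = u\cdot(xw)$ for $x \in F_\infty^n$, a direct unwinding of the definitions yields
\[
\iota(v)\cdot W^{-1}\varrho_w(u)W = \iota(uv) \qquad \text{for every } v \in E_\infty,
\]
so $W^{-1}\varrho_w(u)W \in GL_n(F_\infty)$ is precisely the matrix of multiplication by $u$ on $E_\infty$ taken in the $\iota$-basis. Using $E_\sigma = \prod_{\tau\mid\sigma} E_\tau$ and the fact that $\iota$ respects this decomposition, the $\sigma$-component $(W^{-1}\varrho_w(u)W)_\sigma \in GL_n(F_\sigma)$ is block diagonal with blocks indexed by $\tau \mid \sigma$, the $\tau$-block being the matrix in $GL_{n_{\tau|\sigma}}(F_\sigma)$ of multiplication by $\tau(u) \in E_\tau$.

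I would then handle three cases. If $n_{\tau|\sigma}=1$ and $F_\sigma = \mathbb R$, the $1\times 1$ block is $\tau(u) = |\tau(u)|\cdot\sgn(\tau(u))$ with $\sgn(\tau(u)) \in O(1)$. If $F_\sigma = \mathbb C$ (which forces $n_{\tau|\sigma}=1$), the block is $\tau(u) = |\tau(u)|\cdot(\tau(u)/|\tau(u)|)$ with the phase in $U(1)$. And if $n_{\tau|\sigma}=2$, so $\sigma$ is real and $\tau$ complex, writing $\tau(u) = a+bi$ and using the fixed convention $\mathbb C \simeq \mathbb R^2$, $x+iy \mapsto (y,x)$, the $2\times 2$ block becomes $\begin{pmatrix} a & -b \\ b & a \end{pmatrix} = |\tau(u)|\cdot R_\theta$ for a rotation $R_\theta \in SO(2)$. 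In every case the $\tau$-block equals $|\tau(u)|\cdot I_{n_{\tau|\sigma}}$ times an element of $O(n_{\tau|\sigma})$ or $U(n_{\tau|\sigma})$.

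Reassembling these block decompositions over all $\sigma \in S_F$ yields $W^{-1}\varrho_w(u)W = I(\reg^{\times}_{E/F}(u))\cdot k$ for some $k \in K$, which is the required congruence modulo $K$. Implicit here is that $\reg^{\times}_{E/F}(u)$ actually lies in $T_{E/F}$, which follows from $N_{E/F}(u)=1$ giving $\prod_{\tau\mid\sigma}|\tau(u)|^{n_{\tau|\sigma}} = |\sigma(N_{E/F}(u))| = 1$ for each $\sigma \in S_F$. The main bookkeeping hurdle is keeping the two isomorphisms $w$ and $\iota$ separate and respecting the basis convention in the real-complex case; beyond that, the verifications are elementary linear algebra.
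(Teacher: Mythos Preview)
Your proof is correct and follows essentially the same approach as the paper: identify $W^{-1}\varrho_w(u)W$ with multiplication by $u$ on $E_\infty$ via $\iota$, then decompose each local block as a scaling by $|\tau(u)|$ times an element of the appropriate orthogonal or unitary group. The paper's proof is more terse, simply saying that multiplication by $\tau(u)$ on $E_\tau\subset\mathbb C$ factors as scaling by $|\tau(u)|$ and a rotation, whereas you spell out the three local cases and the bookkeeping explicitly.
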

\begin{proof}
Set $g := W^{-1}\varrho_w(u)W$ and $\rho := \reg^{\times}_{E/F}(u)$. 
By the definition of $W$ and $\varrho_w$, the left hand side $g=W^{-1}\varrho_w(u)W$ represents the multiplication by $u$ on $E_{\infty}$ via $E_{\infty} \overset{\iota}{\simeq} F_{\infty}^n$. 
Now, for $\tau \in S_E$, the multiplication by $\tau (u)$ on $E_{\tau} \subset \mathbb C$ decomposes into the scaling by $|\tau (u)|$ and a rotation. Therefore $g$ decomposes as $g=I(\rho) R$ for some $R \in K$. 
\end{proof}

\begin{proof}[Proof of Proposition \ref{periodicity}]
 Let $\rho \in U_{E/F}$. We have to show that there exists $\gamma \in \Gamma$ such that $\varpi(\rho t)=\gamma \varpi(t)$ holds for all $t \in T_{E/F}$. Take any $u \in \mathcal O_{E/F}^{\times}$ such that $\reg^{\times}_{E/F}(u)=\rho$. Let $\gamma := \varrho_w(u)$. Since $N_{E/F}(u)=1$ and the multiplication by $u$ preserves the ideal $\mathfrak A$, we have $\gamma \in \Gamma$.
Then, by Lemma \ref{lem psi}, we obtain $\varpi(\rho t)=[WI(\rho t)] = [\gamma WI(t)] =\gamma \varpi(t)$ for all $t\in T_{E/F}$. 
\end{proof}

\subsection{Langlands Eisenstein series for $\Res_{F/\mathbb Q}GL_n$}\label{eis series}
In this section, we set up basic definitions of Eisenstein series on $\mathfrak h_F^n$. 
We apply the general construction of the so-called Langlands Eisenstein series to our case $G=\Res_{F/\mathbb Q}GL_n$. We basically follow the argument in Goldfeld~\cite{goldfeld06}. However, since \cite{goldfeld06} deals only with the case  where $F= \mathbb Q$, we need some additional consideration.

In order to define the Langalnds Eisenstein series, we have to choose a parabolic subgroup of $\Gamma_L$.
Let us denote by $\data{\mathfrak a}{L}$ a data consisting of
\begin{enumerate}[--]
\item $L \subset F^n$, an $\mathcal O_F$-latiice (not necessarily defined from the fractional ideal $\mathfrak A \subset E$),
\item $\mathfrak a \subset F$, a fractional $\mathcal O_F$-ideal such that $1 \in \mathfrak a$ (we call such $\mathfrak a$ {\it an anti-integral ideal}), 
\item $\data{\mathfrak a}{L}$, a split injective $\mathcal O_F$-homomorphism, that is, the exact sequence $0 \rightarrow \mathfrak a \hookrightarrow L \rightarrow L/\mathfrak ae \rightarrow 0$ splits, where we denote by $e \in L$ the image of $1 \in  \mathfrak a$. 
\end{enumerate}
In the following, we refer to such a data $\data{\mathfrak a}{L}$ {\it a parabolic data}. 
Set $\Gamma := \Gamma_L$ (cf. (\ref{gamma})). Then we define the parabolic subgroup $P_{\data{\mathfrak a}{L}}$ associated to the data $\data{\mathfrak a}{L}; 1 \mapsto e$ as 
\begin{equation}\label{parabolic}
P= P_{\data{\mathfrak a}{L}} := \Stab_{\Gamma_L}(\data{\mathfrak a}{L})=\{\gamma \in \Gamma_L \mid \mathfrak ae\gamma =\mathfrak ae ~~\text{in $L$}\}.  
\end{equation}

Next, we define a certain left $P$-invariant function on $\mathfrak h_F^n$. 
\begin{dfn}
We define a function 
$
\Det=\Det_{\data{\mathfrak a}{L}}: \mathfrak h_F^n \rightarrow \mathbb R_{\geq 0} $ 
 by
\begin{equation}
\Det([g]) := \prod_{\sigma \in S_F} \frac{|\det g_{\sigma}|^{n_{\sigma}}}{||e g_{\sigma}||^{nn_{\sigma}}}, \quad\text{for $g=(g_{\sigma})_{\sigma} \in G(\mathbb R)$},
\end{equation}
where $||~~||$ is the normalized absolute value defined as (\ref{abs val}). 
Here by an abuse of notation, we denote also by $e$ the image of $e \in L \subset F^n$ under the completion map $\sigma: F^n \hookrightarrow F_{\sigma}^n$, and $eg_{\sigma}$ is an element of $F_{\sigma}^n$.  It is clear that this function is well-defined on $\mathfrak h_F^n$. 
\end{dfn}

\begin{lem}
The function $\Det(z)$ is left $P$-invariant, that is, $\Det(\gamma z) =\Det(z)$ for all $z \in \mathfrak h_F^n$ and $\gamma \in P$.
\end{lem}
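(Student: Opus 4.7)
The proof plan is a direct computation. The key observation is that the definition of $P$ forces any $\gamma \in P$ to act on $e$ by a global unit scalar, and $\gamma$ already has determinant one because it lies in $SL_n(F)$.

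First I would unpack the defining condition of $P$. Since $1 \in \mathfrak a$, we have $e \in \mathfrak a e$, so the condition $\mathfrak a e \gamma = \mathfrak a e$ forces $e\gamma = \lambda e$ for some $\lambda \in \mathfrak a$. Applying the same condition to $\gamma^{-1} \in P$ yields $\lambda^{-1} \in \mathfrak a$, and the identity $\lambda \mathfrak a = \mathfrak a$ (together with the fact that $\mathcal O_F$ is the ring of multipliers of any fractional ideal in a Dedekind domain) then implies $\lambda \in \mathcal O_F^{\times}$. This is the key algebraic ingredient, and it is really the only non-formal step; once this is in hand, the rest is bookkeeping.

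Next I would compute $\Det(\gamma z)$ place by place. For $g=(g_\sigma)_\sigma \in G(\mathbb{R})$ representing $z$, the element $\gamma g$ has $\sigma$-component $\sigma(\gamma) g_\sigma$. Since $\gamma \in SL_n(F)$, one has $\det \sigma(\gamma) = \sigma(\det \gamma) = 1$, so
\begin{equation*}
\det((\gamma g)_\sigma) = \det(g_\sigma).
\end{equation*}
For the denominator, applying $\sigma$ to the identity $e\gamma = \lambda e$ (which holds in $F^n$) gives $e\,\sigma(\gamma) = \sigma(\lambda)\, e$ in $F_\sigma^n$, hence
\begin{equation*}
e(\gamma g)_\sigma = \sigma(\lambda)\, e g_\sigma, \qquad \|e(\gamma g)_\sigma\| = |\sigma(\lambda)|\cdot \|e g_\sigma\|.
\end{equation*}

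Finally, substituting these into the definition of $\Det$, the factors $|\det g_\sigma|^{n_\sigma}$ are unchanged, while the denominators pick up an overall scalar
\begin{equation*}
\prod_{\sigma \in S_F} |\sigma(\lambda)|^{n n_\sigma} = |N_{F/\mathbb Q}(\lambda)|^n.
\end{equation*}
Because $\lambda \in \mathcal O_F^\times$, this equals $1$, so $\Det(\gamma z) = \Det(z)$. The only subtle point in the argument is justifying $\lambda \in \mathcal O_F^\times$; everything after that is formal, and the proof fits cleanly in a few lines.
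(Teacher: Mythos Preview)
Your proof is correct and follows essentially the same approach as the paper's: both observe that $\det\gamma=1$ and that $\gamma$ acts on $e$ by multiplication by a unit $\lambda\in\mathcal O_F^{\times}$, then use the product formula $\prod_\sigma |\sigma(\lambda)|^{n_\sigma}=1$ to conclude. The paper asserts $\lambda\in\mathcal O_F^{\times}$ without justification, whereas you spell out why; otherwise the arguments are identical.
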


\begin{proof}
Since $\gamma \in \Gamma_L$, we have $\det \gamma=1$. Now, since $\gamma \in P$ preserves $\mathfrak ae \subset L$, $\gamma$ acts on $e$ by the multiplication by some $u \in \mathcal O_F^{\times}$. Then,  $\prod_{\sigma}||e\gamma g_{\sigma}||^{n_{\sigma}}=\prod_{\sigma}||ueg_{\sigma}||^{n_{\sigma}}=\prod_{\sigma}|\sigma(u)|^{n_{\sigma}}||eg_{\sigma}||^{n_{\sigma}} =\prod_{\sigma}||eg_{\sigma}||^{n_{\sigma}} $. This shows the lemma.
\end{proof}

\begin{dfn}\label{eis ser dfn}
We define the Eisenstein series associated to the data $\data{\mathfrak a}{L}$ as
\begin{equation}\label{eis ser eqn}
E_{\data{\mathfrak a}{L}}(z,s) := \sum_{\gamma \in P \backslash \Gamma} \Det(\gamma z)^s, \quad\text{for $z\in \mathfrak h_F^n$ and $s\in \mathbb C$, $\re s >1$}.
\end{equation}
\end{dfn}
We prove the absolute convergence of $E_{\data{\mathfrak a}{L}}(z,s)$ in Section \ref{conv eis ser}.

\paragraph{More explicit form}
In order to study $E_{\data{\mathfrak a}{L}}(z,s)$, we rewrite the sum more explicitly.
For an $\mathcal O_F$-lattice $L \subset F^n$ and a fractional $\mathcal O_F$-ideal $\mathfrak a \subset F$, we define
\begin{align}
L_{\mathfrak a}&:= \{ x \in L-\{0\} \mid Fx \cap L = \mathfrak ax \}\\
			&~= \{x \in L \mid \mathfrak a \rightarrow L;~ \alpha \mapsto \alpha x, ~\text{is a split injective $\mathcal O_F$-homomorphism} \}. \label{l_b}
\end{align}

\begin{lem}\label{a-compo lem}
\begin{enumerate}[{\rm (1)}]
\item We have a decomposition $L -\{0\} = \coprod\limits_{\mathfrak a:\text{\rm anti-int.}} L_{\mathfrak a}$, where the union is taken over the anti-integral ideals.
\item For a parabolic data $\data{\mathfrak a}{L}; 1\mapsto e$, we have the following bijection:
\begin{equation}
P_{\data{\mathfrak a}{L}}\backslash \Gamma_L \overset{\sim}{\longrightarrow} \mathcal O_F^{\times} \backslash L_{\mathfrak a};~  \gamma \mapsto e \gamma.
\end{equation}
\end{enumerate}
\end{lem}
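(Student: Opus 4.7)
My approach treats both parts as essentially $\mathcal{O}_F$-module statements, leveraging that $\mathcal{O}_F$ is a Dedekind domain; the key tools are that a submodule of a finitely generated projective $\mathcal{O}_F$-module is a direct summand iff its quotient is torsion-free, and that projective $\mathcal{O}_F$-modules are classified by rank and Steinitz (determinant) class.

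For part (1), given any $x \in L-\{0\}$, I observe that $Fx \cap L$ is an $\mathcal{O}_F$-submodule of the one-dimensional $F$-vector space $Fx$, hence equals $\mathfrak{a}x$ for a uniquely determined fractional ideal $\mathfrak{a}$; the identity $x = 1\cdot x \in Fx \cap L$ forces $1 \in \mathfrak{a}$, so $x$ lies in a unique $L_{\mathfrak{a}}$ with $\mathfrak{a}$ anti-integral, yielding the disjoint union decomposition. The equivalence of the two descriptions of $L_{\mathfrak{a}}$ in (\ref{l_b}) then follows because $\mathfrak{a} \to L,\ \alpha \mapsto \alpha x$ is injective when $x \neq 0$, and its image $\mathfrak{a}x$ is a direct summand of the projective module $L$ iff $L/\mathfrak{a}x$ is torsion-free, which in turn is equivalent to $Fx \cap L = \mathfrak{a}x$ (if $\alpha y \in \mathfrak{a}x$ for some nonzero $\alpha \in \mathcal{O}_F$, then $y \in Fx \cap L$).

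For part (2), well-definedness of $\gamma \mapsto e\gamma$ into $L_{\mathfrak{a}}$ is immediate from $F$-linearity of $\gamma$: $F(e\gamma) \cap L = (Fe \cap L)\gamma = (\mathfrak{a}e)\gamma = \mathfrak{a}(e\gamma)$. For $\mathcal{O}_F^{\times}$-equivariance, any $p \in P$ sends $e = 1 \cdot e \in \mathfrak{a}e$ into $\mathfrak{a}e$, so $ep = ue$ for some $u \in \mathfrak{a}$, and then $u\mathfrak{a}e = \mathfrak{a}ep = \mathfrak{a}e$ forces $u \in \mathcal{O}_F^{\times}$; running the same calculation backwards gives injectivity of the induced map $P\backslash \Gamma_L \to \mathcal{O}_F^{\times}\backslash L_{\mathfrak{a}}$.

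Surjectivity is where the main work lies. Given $x \in L_{\mathfrak{a}}$, part (1) provides direct-sum decompositions $L = \mathfrak{a}e \oplus M = \mathfrak{a}x \oplus M'$; comparing top exterior powers gives $\mathfrak{a}\cdot\det M \cong \mathfrak{a}\cdot\det M'$, whence $M \cong M'$ as $\mathcal{O}_F$-modules by the structure theorem. Combining the obvious isomorphism $\mathfrak{a}e \to \mathfrak{a}x,\ \alpha e \mapsto \alpha x$ with some $\mathcal{O}_F$-isomorphism $M \to M'$ produces an $\mathcal{O}_F$-automorphism $\phi$ of $L$, whose matrix $\gamma_0 \in GL_n(F)$ satisfies $e\gamma_0 = x$ and has $\det \gamma_0 =: v \in \mathcal{O}_F^{\times}$. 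To land in $SL_n$, I rescale only the first summand, redefining $\phi$ on $\mathfrak{a}e$ by $\alpha e \mapsto v^{-1}\alpha x$ (still a valid $\mathcal{O}_F$-isomorphism $\mathfrak{a}e \to \mathfrak{a}x$ since $v^{-1} \in \mathcal{O}_F^{\times}$); the resulting $\gamma \in \Gamma_L$ satisfies $e\gamma = v^{-1}x$, which lies in $\mathcal{O}_F^{\times}\cdot x$. The principal technical point is exactly this determinant correction, which is what makes the passage to $\mathcal{O}_F^{\times}$-orbits on the target side unavoidable in the statement.
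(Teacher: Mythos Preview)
Your proof is correct and follows essentially the same approach as the paper: part (1) is declared clear there, and for part (2) the paper likewise reduces to transitivity, uses the split injections to write $L \cong \mathfrak a \oplus L/\mathfrak a e \cong \mathfrak a \oplus L/\mathfrak a x$, invokes the structure theorem (Proposition~\ref{str thm}) to identify the complements, and then corrects the determinant by rescaling the $\mathfrak a$-summand by a unit. Your write-up is somewhat more explicit about well-definedness, injectivity, and the equivalence in (\ref{l_b}), but the substance is the same.
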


We need the following structure theorem for finitely generated projective modules over Dedekind domains. See \cite[Chapter 7, \S 4, Proposition 24]{bourbakicommalg} for example.
\begin{prop}\label{str thm}
Let $A$ be a Dedekind domain. Then any non-zero finitely generated projective module $M$ over $A$ is isomorphic to $A^{r-1}\oplus \mathfrak a$ for some $r \in \mathbb Z_{\geq 1}$ and a fractional $A$-ideal $\mathfrak a$. Moreover $r$ and the ideal class of $\mathfrak a$ in this presentation are unique. \qed
\end{prop}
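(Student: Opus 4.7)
The plan is to prove existence by induction on the generic rank $r := \dim_K(M\otimes_A K)$ (where $K$ is the fraction field of $A$), and uniqueness by identifying the two invariants with this rank and with the Steinitz class of $M$ in the ideal class group $Cl_A$. For the base case $r=1$: since $M$ is projective and hence torsion-free, the canonical map $M \hookrightarrow M \otimes_A K \cong K$ is injective, and its image is a finitely generated $A$-submodule of $K$, i.e., a fractional $A$-ideal. For the inductive step ($r \geq 2$), pick any nonzero $\phi \in \Hom_A(M,A)$ (such $\phi$ exists because the dual of a nonzero projective module is nonzero, as can be checked after choosing a presentation of $M$ as a summand of a free module). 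The image $\mathfrak b := \phi(M)$ is a nonzero integral ideal, hence invertible and thus projective, so the short exact sequence
\[
0 \to \ker\phi \to M \to \mathfrak b \to 0
\]
splits, giving $M \cong \ker\phi \oplus \mathfrak b$. As a direct summand of $M$, $\ker\phi$ is finitely generated projective of rank $r-1$, so by induction $\ker\phi \cong A^{r-2}\oplus \mathfrak a'$, whence $M \cong A^{r-2}\oplus \mathfrak a' \oplus \mathfrak b$. Applying the Steinitz isomorphism $\mathfrak a' \oplus \mathfrak b \cong A \oplus \mathfrak a'\mathfrak b$ (the main lemma, discussed below) collapses this to $M \cong A^{r-1}\oplus \mathfrak a$ with $\mathfrak a := \mathfrak a'\mathfrak b$.

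For uniqueness, the integer $r$ is recovered intrinsically as $\dim_K(M\otimes_A K)$. For the ideal class, consider the top exterior power $\det M := \wedge^r_A M$, which is a rank-one projective $A$-module. Using the standard decomposition $\wedge^r(P\oplus Q) = \bigoplus_{i+j=r}\wedge^i P \otimes_A \wedge^j Q$ together with $\wedge^i(A^{r-1}) = 0$ for $i > r-1$, $\wedge^{r-1}(A^{r-1}) \cong A$, and $\wedge^1(\mathfrak a) = \mathfrak a$, one computes $\det(A^{r-1}\oplus \mathfrak a) \cong \mathfrak a$. Hence $[\mathfrak a]$ coincides with the Steinitz class $[\det M]$, an intrinsic invariant of $M$, so $[\mathfrak a]$ is determined by $M$ alone.

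The nontrivial input, and the step I would execute most carefully, is the Steinitz isomorphism $\mathfrak a \oplus \mathfrak b \cong A \oplus \mathfrak a\mathfrak b$ for fractional ideals in $A$. The standard argument reduces to the coprime case: by approximation at the finitely many primes dividing $\mathfrak a$ (using that each localization of the Dedekind domain $A$ is a DVR), one can replace $\mathfrak b$ by a scalar multiple $\mathfrak b' = \beta \mathfrak b$ with $\beta \in K^{\times}$ such that $\mathfrak a + \mathfrak b' = A$. Then the sum map $\mathfrak a \oplus \mathfrak b' \to A$, $(a,b)\mapsto a+b$, is surjective with kernel isomorphic to $\mathfrak a \cap \mathfrak b' = \mathfrak a \mathfrak b'$ (since coprimality upgrades intersection to product), and it splits because $A$ is projective. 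This yields $\mathfrak a \oplus \mathfrak b' \cong A \oplus \mathfrak a \mathfrak b'$, and multiplying the $\mathfrak b$-summands by $\beta^{-1}$ on both sides gives the desired $\mathfrak a \oplus \mathfrak b \cong A \oplus \mathfrak a \mathfrak b$.
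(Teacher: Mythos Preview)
Your proof is correct and complete. The paper does not actually prove this proposition: it simply cites Bourbaki \cite[Chapter~7, \S4, Proposition~24]{bourbakicommalg} and marks the statement with \qed, so your self-contained argument via induction on the rank, the Steinitz isomorphism $\mathfrak a \oplus \mathfrak b \cong A \oplus \mathfrak a\mathfrak b$, and the top exterior power for uniqueness goes well beyond what the paper provides.
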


\begin{proof}[Proof of Lemma \ref{a-compo lem}]
(1) is clear. To see (2), it suffices to show that the right (matrix) action of $\Gamma_L$ on $\mathcal O_F^{\times}\backslash L_{\mathfrak a}$ is transitive. 
Take any $x \in L_{\mathfrak a}$. Then by (\ref{l_b}), there exist isomorphisms 
$\varphi_1: \mathfrak a \oplus L/\mathfrak ae \overset {\sim}{\rightarrow} L$ which sends $(1,0)$ to $e$, and  $\varphi_2: \mathfrak a \oplus L/\mathfrak ax \overset {\sim}{\rightarrow} L$ which sends $(1,0)$ to $x$. Then by Proposition \ref{str thm}, we see that there exists an $\mathcal O_F$-isomorphism $\varphi: L/\mathfrak ae \overset{\sim}{\rightarrow}  L/\mathfrak ax$.
Then for any $u \in \mathcal O_F^{\times}$, we obtain an automorphism 
\begin{equation}
L \overset{\varphi_1^{-1}}{\overset {\sim}{\longrightarrow}} \mathfrak a \oplus L/\mathfrak ae \overset{\times u \oplus \varphi}{\overset {\sim}{\longrightarrow}} \mathfrak a \oplus L/\mathfrak ax \overset{\varphi_2}{\overset {\sim}{\longrightarrow}} L
\end{equation}
of $L$ which sends $e$ to $x$. This extends to $\gamma \in GL_n(F)$ such that $\det \gamma \in \mathcal O_F^{\times}$. By replacing $u$ with $u (\det \gamma)^{-1}$, we obtain $\gamma \in \Gamma_L$ such that $e \gamma \equiv x \mod \mathcal O_F^{\times}$. 
\end{proof}

\begin{cor}\label{eis ser lem}
We can rewrite the sum (\ref{eis ser eqn}) as
\begin{equation}\label{eis ser eqn2}
E_{\data{\mathfrak a}{L}}([g],s) = \sum_{x \in \mathcal O_F^{\times} \bs L_{\mathfrak a}} \prod_{\sigma \in S_F}\frac{|\det g_{\sigma}|^{n_{\sigma}s}}{||x g_{\sigma}||^{nn_{\sigma}s}}, \quad \text{for $g\in G(\mathbb R)$}.
\end{equation}
In particular, $E_{\data{\mathfrak a}{L}}(z,s)$ depends only on the lattice $L$ and the anti-integral ideal $\mathfrak a$.
\end{cor}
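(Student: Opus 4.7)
The plan is to transport the sum defining $E_{\data{\mathfrak a}{L}}(z,s)$ across the bijection of Lemma \ref{a-compo lem}(2), and then verify that each summand has the asserted form and is well-defined on $\mathcal O_F^\times \backslash L_{\mathfrak a}$.

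First I would fix a lift $g \in G(\mathbb R)$ of $z = [g]$ and, for any $\gamma \in \Gamma = \Gamma_L$, write out
\[
\Det(\gamma z) = \prod_{\sigma \in S_F} \frac{|\det (\gamma g)_\sigma|^{n_\sigma}}{||e(\gamma g)_\sigma||^{nn_\sigma}}.
\]
Two simplifications apply. Since $\gamma \in SL_n(F)$, we have $\det \gamma_\sigma = 1$ for every $\sigma$, hence $|\det (\gamma g)_\sigma| = |\det g_\sigma|$; and since the matrix action is on the right, $e(\gamma g)_\sigma = (e\gamma)_\sigma g_\sigma$. Writing $x = e\gamma \in L$, this gives $\Det(\gamma z)^s = \prod_\sigma |\det g_\sigma|^{n_\sigma s}/||x g_\sigma||^{nn_\sigma s}$.

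Next, I would apply Lemma \ref{a-compo lem}(2) to replace the index set $P\backslash \Gamma$ with $\mathcal O_F^\times \backslash L_{\mathfrak a}$ via $\gamma \mapsto e\gamma$, which yields the desired expression (\ref{eis ser eqn2}). The one thing that must be checked is that the summand
\[
\prod_{\sigma \in S_F}\frac{|\det g_\sigma|^{n_\sigma s}}{||x g_\sigma||^{nn_\sigma s}}
\]
depends only on the class of $x$ modulo $\mathcal O_F^\times$. If $x$ is replaced by $ux$ with $u \in \mathcal O_F^\times$, then $||(ux) g_\sigma|| = |\sigma(u)|\, ||x g_\sigma||$, so the change in the denominator is $\prod_\sigma |\sigma(u)|^{nn_\sigma s} = |N_{F/\mathbb Q}(u)|^{ns} = 1$.

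The final sentence, asserting dependence only on $L$ and $\mathfrak a$, is then immediate: the right-hand side of (\ref{eis ser eqn2}) involves only the set $L_{\mathfrak a}$ (defined purely from $L$ and $\mathfrak a$), the action of $\mathcal O_F^\times$, and the chosen lift $g$ of $z$. There is no genuine obstacle here; the only subtlety is the unit-normalization step, which is handled by the product formula for units.
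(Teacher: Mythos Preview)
Your proof is correct and follows exactly the approach the paper intends: the corollary is stated without an explicit proof, as it follows directly from transporting the sum across the bijection of Lemma \ref{a-compo lem}(2), which is precisely what you do. Your verification that the summand is constant on $\mathcal O_F^\times$-orbits via the product formula is the natural (and only) missing detail, and you handle it correctly.
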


We define some variants of $E_{\data{\mathfrak a}{L}}(z,s)$. 
\begin{dfn}\label{var eis ser}
Using the expression (\ref{eis ser eqn2}), for $\mathscr A \in Cl_F$, we define
\begin{equation}
E_{L, \mathscr A}(z,s) := \sum_{\substack{\mathfrak a \in \mathscr A\\ \mathfrak a:\text{anti-int.}}}E_{\data{\mathfrak a}{L}}(z,s),
\end{equation}
where the sum is taken over the anti-integral ideals in $\mathscr A$, and define
\begin{equation}
E_{L}(z,s) := \sum_{\mathscr A\in Cl_F}E_{L, \mathscr A}(z,s) = \sum_{x \in \mathcal O_F^{\times} \bs L-\{0\}} \prod_{\sigma \in S_F}\frac{|\det g_{\sigma}|^{n_{\sigma}s}}{||x g_{\sigma}||^{nn_{\sigma}s}}.
\end{equation}
Here the last equality follows from Lemma \ref{a-compo lem} (1) and Corollary \ref{eis ser lem}.
\end{dfn}

\begin{ex}\label{eis ser ex}
In the case where $F=\mathbb Q$, $n=2$, $L=\mathbb Z \oplus \mathbb Z$, and $\mathfrak a=\mathbb Z$, we have $L_{\mathfrak a}=\{(c,d) \in \mathbb Z^2 \mid (c,d)=1 \}$, and $E_{\data{\mathbb Z}{\mathbb Z^2}}(z,s)$ is nothing but the usual real analytic Eisenstein series (\ref{class eis ser}).
\end{ex}

\if0
Now we prove the convergence of $E_L(z,s)$. We first consider the case where $F= \mathbb Q$.
\begin{lem}
Let $F=\mathbb Q$ and $L \subset \mathbb Q^n$ be a $\mathbb Z$-lattice. Then
\begin{equation}
E_{L}([g],s) =\frac{1}{2} \sum_{x \in L-\{0\}} \frac{|\det g|^{s}}{||x g||^{ns}},
\end{equation}
converges absolutely and compactly for $g \in GL_n(\mathbb R)$ and $\re s>1$.
\end{lem}
\begin{proof}
This is well known. See
\end{proof}

Then we discuss the general case. Let the data $\data{\mathfrak a}{L}$ be the same as before. Set
\begin{equation}
\mathcal E_L(z,s):= E_{\Res_{\mathcal O_F/\mathbb Z}L}(z,s)= \sum_{x \in L-\{0\}} \frac{1}{(\sum_{\sigma \in S_F} ||x g_{\sigma}||^{2})^{ns/2}},
\end{equation}
\fi

\if0
Now we prove the convergence of $E_L(z,s)$. Note that we can identify $T_{F/\mathbb Q}$ as a subgroup lying in the center $F_{\infty}^{\times}$ of $G(\mathbb R)=GL_n(F_{\infty})$.
\begin{lem} 
The infinite series
\begin{equation}
\mathcal E([g],s):= \sum_{x \in L-\{0\}} \frac{1}{(\sum_{\sigma \in S_F} ||x g_{\sigma}||^{2})^{ns/2}},
\end{equation}
converges absolutely and compactly for $\re s >1$ and $g=(g_{\sigma})_{\sigma} \in G(\mathbb R)$.
Moreover, for all $g \in G(\mathbb R)$ and $\rho \in U_{F/\mathbb Q}$, we have
$\mathcal E_L([g\rho],s)=\mathcal E_L([g],s)$. 
\end{lem}

\begin{prop}
\begin{equation}
\frac{1}{Reg(\mathcal O_{F/\mathbb Q}^{\times})}\int_{T_{F/\mathbb Q} / U_{F/\mathbb Q}} \mathcal E([g t],s)\,d^{\times}\!t =(gamma factor)E_L(z,s),
\end{equation}
for $\re s >1$ and $g=(g_{\sigma})_{\sigma} \in G(\mathbb R)$. In particular
\end{prop}
\fi

\section{Hecke's integral formula}\label{integral formula}

In this section, we first show the convergence of Eisenstein series defined in Section \ref{eis series}. Then we prove the relative Hecke's integral formula for $E/F$. 

\subsection{Haar measures}\label{haar meas}
First, we fix the normalization of the Haar measures. 
Let $k$ be an arbitrary number field. For $\sigma \in S_k$, we normalize the Haar measure $dx_{\sigma}$ (resp. $\dtimes t_{\sigma}$) on $k_{\sigma}$ (resp. $\mathbb R_{>0}$) as
\begin{align}
dx_{\sigma} &= dx, &\dtimes t_{\sigma}&= dt/t,          & &\text{if $\sigma$ is real}, \\
dx_{\sigma} &= i dxd\bar{x}, &\dtimes t_{\sigma}&= dt^{2}/t^{2},                         & &\text{if $\sigma$ is complex}.
\end{align}
Then we define the Haar measure $dx_k$ (resp. $\dtimes t_k$) on $k_{\infty}$ (resp. $T_k$) to be the product measure $ dx_k:= \prod_{\sigma \in S_k}dx_{\sigma}$ (resp. $\dtimes t_k:= \prod_{\sigma \in S_k}\dtimes t_{\sigma}$).

Now, let $k'/k$ be an arbitrary extension of number fields. We normalize the Haar measure $\dtimes t_{k'/k}$ on $T_{k'/k}$ so that the induced Haar measure on the quotient group $T_k \simeq T_{k'}/T_{k'/k}$ coincides with $\dtimes t_k$, that is, for any integrable function $\phi$ on $T_{k'}$, we have
\begin{equation}
\int_{T_{k'}} \phi ~\dtimes t_{k'} = \int_{T_k} \Big( \int_{T_{k'/k}} \phi ~\dtimes t_{k'/k}  \Big) \dtimes t_k.
\end{equation}
Next, we fix the notion of the relative gamma factor for $k'/k$. 
\begin{lem}\label{lem dtimes}
Let $r$ be a positive integer, and let also $n_1, \dots, n_r$ be positive integers. Set $T:= \{t=(t_i)_i \in \mathbb R_{>0}^r \mid \prod_{i=1}^r t^{n_i} =1 \}$. Take any $i_0 \in \{1, \dots , r\}$, and let
\begin{equation}
p^{i_0}: \mathbb R_{>0}^r \rightarrow \mathbb R_{>0}^{r-1} :~ (t_i)_{1\leq i\leq r} \mapsto (t_i)_{1\leq i\leq r, i \neq i_0}
\end{equation}
be the natural projection, which induces an isomorphism $p^{i_0}: T \overset{\sim}{\rightarrow} \mathbb R_{>0}^{r-1}$. We define the Haar measure $\dtimes t$ on $T$ to be the pull-back measure $\dtimes t :=(p^{i_0})^*\prod_{i\neq i_0} dt^{n_i}/t^{n_i}$. 
\begin{enumerate}[{\rm (1)}]
\item For any integrable function $\phi$ on $\mathbb R_{>0}^r$, the measure $\dtimes t$ satisfies
\begin{equation}
\int_{\mathbb R_{>0}^r} \phi(t) ~\prod_{i=1}^r\frac{dt_i^{n_i}}{t_i^{n_i}} = \int_{u=t_1^{n_1}\cdots t_r^{n_r} \in \mathbb R_{>0}} \Big( \int_{T} \phi(t) \,\dtimes t  \Big) \frac{du}{u}.
\end{equation}
In particular, the Haar measure $\dtimes t$ is independent of the choice of $i_0$.
\item Set $n:=\sum_{i=1}^r n_i$, and let $\Gamma(s)$ be the gamma function. Then
\begin{equation}
n \Gamma(ns) \int_{T} \frac{1}{(t_1+\cdots +t_r)^{ns}} \dtimes t= \prod_{i=1}^r n_i\Gamma(n_i s).
\end{equation}
\end{enumerate}
\end{lem}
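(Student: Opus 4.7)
The plan is that both parts reduce to a single Jacobian computation on $\R_{>0}^r$ performed in logarithmic coordinates. Set $x_j := \log t_j$, so that $dt_j^{n_j}/t_j^{n_j} = n_j\,dx_j$ and the defining equation of $T$ becomes the linear equation $\sum_j n_j x_j = 0$ cutting out a hyperplane $V\subset\R^r$. The projection $p^{i_0}$ becomes the linear projection $V\to\R^{r-1}$ forgetting the $i_0$-th coordinate, so $\dtimes t$ is the pullback of $\prod_{j\neq i_0} n_j\,dx_j$; this is translation-invariant on $V$, hence a Haar measure on $T$. Independence from the choice of $i_0$ will be automatic once part~(1) is proved, since the Haar measure on $T$ is unique up to a positive scalar and that scalar is pinned down by the Fubini identity.

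For part~(1), I would change variables on $\R_{>0}^r$ from $(x_j)_j$ to $(y,(x_j)_{j\neq i_0})$, where $y := \sum_j n_j x_j = \log u$. A direct computation of the Jacobian (divide the $y$-row by $n$, add it to each remaining row to turn those rows into standard basis vectors $e_j$, then expand along the $i_0$-th column) yields
\[
\Bigl|\det\tfrac{\partial(y,(x_j)_{j\neq i_0})}{\partial(x_j)_j}\Bigr| = n_{i_0}.
\]
Combined with the overall factor $\prod_j n_j$ in $\prod_j dt_j^{n_j}/t_j^{n_j}$, this gives exactly $\frac{du}{u}\cdot\dtimes t$ on $\R_{>0}\times T$, after identifying the slice $\{\prod_j t_j^{n_j}=u\}$ with $T$ via $t = u^{1/n}t'$. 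This is the Fubini identity.

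For part~(2), I would plug the test function $\phi(t) := e^{-(t_1+\cdots+t_r)}\prod_j t_j^{n_j s}$ into part~(1). The left-hand side factors via the basic identity $\int_0^\infty e^{-t}t^{a}\,dt/t = \Gamma(a)$ as $\prod_j n_j\Gamma(n_j s)$. For the right-hand side, writing $t = u^{1/n}t'$ with $t'\in T$ gives $\prod_j t_j^{n_j s} = u^s\bigl(\prod_j (t'_j)^{n_j}\bigr)^s = u^s$ (since $t'\in T$) and $\sum_j t_j = u^{1/n}\sum_j t'_j$. After swapping the $u$- and $t'$-integrals the inner $u$-integral becomes $\int_0^\infty u^s e^{-u^{1/n}\sigma}\,du/u$, where $\sigma := \sum_j t'_j$; the substitution $v := u^{1/n}\sigma$ turns this into $n\,\Gamma(ns)\,\sigma^{-ns}$, and equating the two expressions yields the claimed formula.

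The only substantive step is the Jacobian computation in (1); everything else is elementary bookkeeping together with a single one-variable substitution in the $u$-integral in~(2).
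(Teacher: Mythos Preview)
Your argument is correct. The paper itself omits the proof entirely, writing only ``This can be proved by a straightforward computation, and we omit the proof,'' so you have supplied exactly the details the author left out; the logarithmic change of variables, the Jacobian computation, and the gamma-function trick in~(2) are the natural route and almost certainly what was intended.

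One cosmetic remark: in your Jacobian step, the rows other than the $y$-row are already the standard basis vectors $e_j$ ($j\neq i_0$), so no row operations are needed---expanding directly along the $i_0$-th column gives $|\det| = n_{i_0}$ immediately. The rest is clean.
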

\begin{proof}
This can be proved by a straightforward computation, and we omit the proof.
\end{proof}
Let $k'/k$ be an arbitrary extension of number fields of degree $n$. We define the relative gamma factor $\Gamma_{k'/k}(s)$ $(s \in \mathbb C, \re s >1)$ as
\begin{align}
\Gamma_{k'/k}(s):= \int_{T_{k'/k}}\prod_{\sigma \in S_k}\frac{1}{(\sum_{\tau \in S_{k', \sigma}} t_{\tau}^2)^{nn_{\sigma}s/2}}\dtimes t_{k'/k}. \label{gamma factor}
\end{align}
\begin{lem}\label{lem gamma factor}
We have
\begin{equation}
\Gamma_{k'/k}(s)= \frac{\prod_{\tau \in S_{k'}}\frac{n_{\tau}}{2}\Gamma(\frac{n_{\tau}s}{2})}{\prod_{\sigma \in S_k}\frac{nn_{\sigma}}{2}\Gamma(\frac{nn_{\sigma}s}{2})}, \quad\text{for $\re s>1$}.
\end{equation}
\end{lem}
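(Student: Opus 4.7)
The plan is to factor the integral locally over $\sigma\in S_k$ and then reduce each local piece to a direct application of Lemma~\ref{lem dtimes}~(2). I would begin by observing that the norm homomorphism $N_{k'/k}:T_{k'}\to T_k$ decomposes as a product of local norms, so
\[
T_{k'/k}=\prod_{\sigma\in S_k}T_\sigma,\qquad T_\sigma:=\Bigl\{(t_\tau)_{\tau|\sigma}\in\mathbb{R}_{>0}^{S_{k',\sigma}}\ \Big|\ \prod_{\tau|\sigma}t_\tau^{n_{\tau|\sigma}}=1\Bigr\}.
\]
Since the integrand in (\ref{gamma factor}) also splits as a product over $\sigma$, this gives $\Gamma_{k'/k}(s)=\prod_{\sigma\in S_k}I_\sigma(s)$ for suitable local factors $I_\sigma(s)$.

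Next I would identify the restriction of $\dtimes t_{k'/k}$ to each $T_\sigma$. Writing $\dtimes t_{k'}=\prod_\tau n_\tau\,d\log t_\tau$ and $\dtimes t_k=\prod_\sigma n_\sigma\,d\log u_\sigma$, using the identity $n_\tau=n_{\tau|\sigma}n_\sigma$, and taking $u_\sigma=\prod_{\tau|\sigma}t_\tau^{n_{\tau|\sigma}}$ as a new coordinate together with $(t_\tau)_{\tau\neq\tau_0}$ for some fixed $\tau_0|\sigma$, a direct Jacobian computation shows that the local component of $\dtimes t_{k'/k}$ equals $n_\sigma^{r_\sigma-1}$ times the Haar measure $\dtimes t_{\mathrm{Lem}}$ on $T_\sigma$ furnished by Lemma~\ref{lem dtimes} with parameters $n_i=n_{\tau|\sigma}$; here $r_\sigma:=\#S_{k',\sigma}$.

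To evaluate $I_\sigma(s)$ I would then substitute $x_\tau=t_\tau^2$. This preserves the equation cutting out $T_\sigma$, turns the quadratic form $\sum_{\tau|\sigma}t_\tau^2$ in the denominator into the linear form $\sum_{\tau|\sigma}x_\tau$ required by Lemma~\ref{lem dtimes}~(2), and contributes a Jacobian factor $2^{-(r_\sigma-1)}$ (since $d\log x_\tau=2\,d\log t_\tau$). Applying Lemma~\ref{lem dtimes}~(2) with parameters $n_i=n_{\tau|\sigma}$ and $s$ replaced by $n_\sigma s/2$, noting that $n_{\tau|\sigma}\cdot n_\sigma s/2=n_\tau s/2$ and $n\cdot n_\sigma s/2=nn_\sigma s/2$, then yields
\[
I_\sigma(s)=\Bigl(\tfrac{n_\sigma}{2}\Bigr)^{r_\sigma-1}\frac{\prod_{\tau|\sigma}n_{\tau|\sigma}\,\Gamma(n_\tau s/2)}{n\,\Gamma(nn_\sigma s/2)}.
\]
Taking the product over $\sigma\in S_k$ and regrouping via $n_\tau/2=n_{\tau|\sigma}(n_\sigma/2)$ and $nn_\sigma/2=n\cdot(n_\sigma/2)$ recovers the claimed formula.

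The main obstacle I anticipate is the bookkeeping of normalization constants: making sure that the factor $n_\sigma^{r_\sigma-1}$ coming from the identification of $\dtimes t_{k'/k}|_{T_\sigma}$ with $\dtimes t_\mathrm{Lem}$, together with the factor $2^{-(r_\sigma-1)}$ from the squaring substitution, combine precisely into $(n_\sigma/2)^{r_\sigma-1}$, which is exactly what is needed to convert $\prod_{\tau|\sigma}n_{\tau|\sigma}$ from the output of Lemma~\ref{lem dtimes}~(2) into $\prod_{\tau|\sigma}(n_\tau/2)$ in the desired numerator.
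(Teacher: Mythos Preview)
Your proposal is correct and takes essentially the same approach as the paper: both reduce the computation to Lemma~\ref{lem dtimes}, and your argument is simply a careful unpacking of the one-line proof the paper gives. The local factorization, the identification of the restricted measure as $n_\sigma^{r_\sigma-1}$ times the Lemma~\ref{lem dtimes} measure, the squaring substitution, and the final regrouping of constants are all correct and constitute exactly the bookkeeping the paper leaves implicit.
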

\begin{proof}
This follows from Lemma \ref{lem dtimes}.
\end{proof}

Let again $k$ (resp. $k'/k$) be a number field (resp. an extension of number fields). We define the regulator $R_{k}$ (resp. relative regulator $R_{k'/k}$) to be the volume of $T_{k/\mathbb Q}/U_{k}$ (resp. $T_{k'/k}/U_{k'/k}$) with respect to the Haar measure $\dtimes t_{k/\mathbb Q}$ (resp. $\dtimes t_{k'/k}$).
We define $\mu_k$ (resp. $\mu_{k'/k}$) to be the subgroup of torsion elements in $\mathcal O_{k}^{\times}$ (resp. $\mathcal O_{k'/k}^{\times}$), and set $w_{k}:=\# \mu_{k}$ (resp. $w_{k'/k}:=\# \mu_{k'/k}$). 
\begin{lem}\label{lem regulator}
Let $k'/k$ be an extension of number fileds. Then we have 
\begin{equation}
R_{k'/k} = \frac{[U_{k'}: U_{k}U_{k'/k}]}{[k':k]^{r_k-1}} \frac{R_{k'}}{R_{k}}.
\end{equation}
\end{lem}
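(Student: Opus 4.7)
The plan is to exhibit a finite-index sublattice $\Lambda \subset U_{k'}$ whose covolume in $T_{k'/\mathbb Q}$ with respect to $\dtimes t_{k'/\mathbb Q}$ can be computed in two different ways: directly via $[U_{k'}:\Lambda]\cdot R_{k'}$, and via Fubini along the short exact sequence $1 \to T_{k'/k} \to T_{k'/\mathbb Q} \xrightarrow{N_{k'/k}} T_{k/\mathbb Q} \to 1$.

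First I would introduce the diagonal embedding $\iota\colon T_k \hookrightarrow T_{k'}$ that sends $(t_\sigma)_{\sigma \in S_k}$ to the tuple whose $\tau$-component (for $\tau \in S_{k',\sigma}$) equals $t_\sigma$. A short computation using $\sum_{\tau\mid\sigma}n_\tau = n_\sigma [k':k]$ shows $\iota(T_{k/\mathbb Q}) \subset T_{k'/\mathbb Q}$, and that $N_{k'/k}\circ\iota$ equals the $[k':k]$-th power map on $T_{k/\mathbb Q}$. For $u \in \mathcal O_k^\times \subset \mathcal O_{k'}^\times$ one checks $\reg^{\times}_{k'}(u) = \iota(\reg^{\times}_k(u))$, so via $\iota$ the subgroup $U_k$ is naturally identified with the image of $\mathcal O_k^\times$ inside $U_{k'}$, and the product $\Lambda := U_k\, U_{k'/k}$ becomes a well-defined subgroup of $U_{k'}$.

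Next I would verify that $U_k \cap U_{k'/k} = \{1\}$: any element $u$ of the intersection satisfies $u^{[k':k]} = N_{k'/k}(\iota(u)) = 1$ in the torsion-free lattice $U_k \subset T_{k/\mathbb Q}$, forcing $u=1$. Hence $\Lambda \cong U_k \oplus U_{k'/k}$ has rank $(r_k-1)+(r_{k'}-r_k) = r_{k'}-1$, the same as $U_{k'}$, so $\Lambda$ is a finite-index sublattice of $U_{k'}$. Moreover $\Lambda \cap T_{k'/k} = U_{k'/k}$ and $N_{k'/k}(\Lambda) = U_k^{[k':k]}$, which has index $[k':k]^{r_k-1}$ in $U_k$.

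The measure normalizations of Section \ref{haar meas} combine to give the Fubini decomposition $\dtimes t_{k'/\mathbb Q} = \dtimes t_{k'/k}\cdot \dtimes t_{k/\mathbb Q}$ along the short exact sequence above (compare $\dtimes t_{k'} = \dtimes t_{k'/k}\, \dtimes t_k$ and $\dtimes t_k = \dtimes t_{k/\mathbb Q}\, \dtimes t_\mathbb Q$ with $\dtimes t_{k'} = \dtimes t_{k'/\mathbb Q}\, \dtimes t_\mathbb Q$). Applying Fubini to $T_{k'/\mathbb Q}/\Lambda$ then yields
\begin{equation*}
\mathrm{vol}(T_{k'/\mathbb Q}/\Lambda) \;=\; R_{k'/k}\cdot [k':k]^{r_k-1}\, R_k,
\end{equation*}
while the inclusion $\Lambda \subset U_{k'}$ gives $\mathrm{vol}(T_{k'/\mathbb Q}/\Lambda) = [U_{k'}:\Lambda]\cdot R_{k'}$. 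Equating the two expressions and solving for $R_{k'/k}$ yields the claimed formula. The only real obstacle is the bookkeeping for the Fubini decomposition of measures along these nested exact sequences; once that is in hand, the remainder is index arithmetic.
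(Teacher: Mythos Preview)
Your argument is correct and follows essentially the same route as the paper: the paper's proof is a one-line invocation of the exact sequence $1 \to U_{k'/k} \to U_k U_{k'/k} \xrightarrow{N_{k'/k}} N_{k'/k}(U_k) \to 1$ together with a reference to Costa--Friedman, and what you have written is precisely a careful unpacking of that sequence, including the identification of $U_k$ inside $U_{k'}$ via the diagonal embedding $\iota$, the computation $N_{k'/k}(\iota(U_k)) = U_k^{[k':k]}$, and the resulting Fubini comparison of covolumes. No substantive difference.
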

\begin{proof}
This follows easily from the exact sequence $1 \rightarrow U_{k'/k} \rightarrow U_{k}U_{k'/k} \rightarrow N_{k'/k}U_k \rightarrow 1$. See also \cite[Theorem 1]{costafriedman91}, although their definition of the relative regulator is slightly different from ours.
\end{proof}

\subsection{The convergence of Eisenstein series}\label{conv eis ser}
Now we prove the convergence of Eisenstein series defined in Section \ref{eis series}. 
Let $F$ be the same as in Section \ref{setting}. Note that we can identify $T_{F/\mathbb Q}$ as a subgroup of $G(\mathbb R)=GL_n(F_{\infty})$ lying in the center $F_{\infty}^{\times} \subset GL_n(F_{\infty})$.
\begin{lem} 
The infinite series
\begin{equation}
\mathcal E([g],s):= \sum_{x \in L-\{0\}} \frac{1}{(\sum_{\sigma \in S_F} ||x g_{\sigma}||^{2})^{dns/2}},
\end{equation}
converges absolutely and compactly for $\re s >1$ and $g=(g_{\sigma})_{\sigma} \in G(\mathbb R)$. 
Moreover, for any $g \in G(\mathbb R)$ and $\rho \in U_{F}$, we have
$\mathcal E_L([g\rho],s)=\mathcal E_L([g],s)$. 
\end{lem}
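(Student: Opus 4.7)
The plan is to recognize the series $\mathcal E(g,s)$ as an Epstein-type zeta series attached to the lattice $L\subset F^n$ and a positive definite quadratic form that depends on $g$, and then read off convergence and the unit invariance from standard features of such series.

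First I would set up the geometric picture. Identify $F_\infty^n$ with $\mathbb R^{dn}$ via a $\mathbb Q$-basis of $F$ (so that $L$ becomes a full-rank $\mathbb Z$-lattice of rank $dn$), and for each $g\in G(\mathbb R)$ define the real quadratic form
\[
Q_g(x)\ :=\ \sum_{\sigma\in S_F}\lVert x g_\sigma\rVert^2,\qquad x\in F_\infty^n.
\]
Because the map $x\mapsto (xg_\sigma)_\sigma$ is an $\mathbb R$-linear automorphism of $F_\infty^n$ and each $\lVert\,\cdot\,\rVert^2$ is positive definite, $Q_g$ is a positive definite quadratic form on $\mathbb R^{dn}$. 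The right $K$-action leaves each $\lVert xg_\sigma\rVert$ unchanged (as $K_\sigma$ is orthogonal/unitary), so $Q_g$, and hence $\mathcal E(g,s)$, descends to $G(\mathbb R)/K$.

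Next I would prove absolute and compact convergence for $\mathrm{Re}(s)>1$. The series is
\[
\mathcal E(g,s)\ =\ \sum_{x\in L-\{0\}}Q_g(x)^{-dns/2},
\]
an Epstein zeta series. For any compact set $C\subset G(\mathbb R)$, a standard compactness argument on the space of positive definite quadratic forms gives constants $c_1,c_2>0$ and a fixed reference form $Q_0$ on $\mathbb R^{dn}$ with $c_1 Q_0\le Q_g\le c_2 Q_0$ for all $g\in C$. Each term is then dominated by $c_1^{-d n\,\mathrm{Re}(s)/2}\,Q_0(x)^{-d n\,\mathrm{Re}(s)/2}$. Classical comparison of $\sum_x Q_0(x)^{-\alpha/2}$ with $\int_{\mathbb R^{dn}\setminus B}\lVert y\rVert^{-\alpha}\,dy$ shows that this majorant converges whenever $dn\,\mathrm{Re}(s)>dn$, i.e.\ $\mathrm{Re}(s)>1$, and uniformly so on $C\times\{s:\mathrm{Re}(s)\ge 1+\varepsilon\}$ for any $\varepsilon>0$. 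This yields absolute and compact convergence.

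Finally I would establish the $U_F$-invariance by a change of variables in the lattice sum. Pick any $\rho\in U_F$ and write $\rho=\mathrm{reg}_F^\times(u)=(|\sigma(u)|)_\sigma$ for some $u\in\mathcal O_F^\times$. Viewing $\rho$ as a scalar in the center $F_\infty^\times\subset G(\mathbb R)$, one has $(g\rho)_\sigma=\rho_\sigma g_\sigma$, hence $Q_{g\rho}(x)=\sum_\sigma \rho_\sigma^{2}\lVert xg_\sigma\rVert^{2}$. Since $u$ is a unit, multiplication by $u^{-1}$ is a bijection of the $\mathcal O_F$-module $L-\{0\}$, and under the embedding $F\hookrightarrow F_\sigma$ it acts on $F_\sigma^n$ as multiplication by the scalar $\sigma(u^{-1})$, which has absolute value $\rho_\sigma^{-1}$. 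Substituting $x=u^{-1}y$ gives
\[
Q_{g\rho}(u^{-1}y)\ =\ \sum_\sigma \rho_\sigma^{2}\,\rho_\sigma^{-2}\,\lVert yg_\sigma\rVert^{2}\ =\ Q_g(y),
\]
and reindexing the sum yields $\mathcal E(g\rho,s)=\mathcal E(g,s)$, as claimed.

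The only mildly technical step is the compact convergence bound, but it reduces to a standard Epstein-type majorization once the positive definiteness of $Q_g$ and its uniform comparison on compact sets is observed; neither the arithmetic structure of $F$ nor the particular lattice $L$ plays a role beyond providing a $\mathbb Z$-lattice of full rank in the ambient Euclidean space.
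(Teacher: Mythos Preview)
Your proof is correct and follows essentially the same approach as the paper, which simply identifies $\mathcal E$ as an Epstein zeta function and cites Siegel for the convergence. You supply more detail than the paper does, including an explicit verification of the $U_F$-invariance via the substitution $x\mapsto u^{-1}x$, which the paper leaves entirely to the reader.
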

\begin{proof}
This is the well-known convergence of the Epstein zeta function. See \cite[p.47]{siegel80} for example.
\end{proof}

\begin{prop}\label{convergence e_l}
We have
\begin{equation}
\int_{T_{F/\mathbb Q} / U_{F}} \mathcal E([g t],s)\dtimes t_{F/\mathbb Q} =\frac{w_{F}\Gamma_{F/\mathbb Q}(ns)}{\prod_{\sigma \in S_F}|\det g_{\sigma}|^{n_{\sigma}s}}E_L(z,s),
\end{equation}
for $\re s >1$ and $g=(g_{\sigma})_{\sigma} \in G(\mathbb R)$. In particular, $E_{L}(z,s)$ converges absolutely and compactly for $\re s >1$ and $g=(g_{\sigma})_{\sigma} \in G(\mathbb R)$.
\end{prop}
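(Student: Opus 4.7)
The plan is to unfold the integral over the compact quotient $T_{F/\mathbb Q}/U_F$ into an integral over the full group $T_{F/\mathbb Q}$ by exploiting the free scalar action of $\mathcal O_F^\times$ on $L - \{0\}$, and then to recognize each of the resulting orbit-integrals as an evaluation of the relative gamma factor $\Gamma_{F/\mathbb Q}(ns)$ defined in (\ref{gamma factor}).

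For the unfolding, I would view $t \in T_{F/\mathbb Q}$ as a central scalar in $G(\mathbb R)$ to obtain
\begin{equation*}
\mathcal E([gt], s) = \sum_{x \in L - \{0\}} \frac{1}{\bigl(\sum_\sigma t_\sigma^2 \|x g_\sigma\|^2\bigr)^{dns/2}}.
\end{equation*}
Since $F^n$ is torsion-free, $\mathcal O_F^\times$ acts freely on $L - \{0\}$ by scalar multiplication, and $u \mapsto \reg^\times_F(u)$ identifies $\mathcal O_F^\times / \mu_F$ with $U_F$. Combined with the scaling identity $\|u x g_\sigma\| = |\sigma(u)| \cdot \|x g_\sigma\|$, the standard Eisenstein-unfolding argument converts the $U_F$-invariant sum over $\mathcal O_F^\times$-orbits into a $T_{F/\mathbb Q}$-integral with an extra factor $w_F = |\mu_F|$, giving
\begin{equation*}
\int_{T_{F/\mathbb Q}/U_F} \mathcal E([gt], s)\dtimes t = w_F \sum_{[x] \in \mathcal O_F^\times \bs (L-\{0\})} \int_{T_{F/\mathbb Q}}\frac{\dtimes t}{\bigl(\sum_\sigma t_\sigma^2 \|x g_\sigma\|^2\bigr)^{dns/2}}.
\end{equation*}

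For each $[x]$, I would then multiplicatively translate by $(\|x g_\sigma\|/\lambda_x^{1/d})_\sigma \in T_{F/\mathbb Q}$, where $\lambda_x := \prod_\sigma \|x g_\sigma\|^{n_\sigma}$; this is a measure-preserving substitution that reduces the integrand to the one defining $\Gamma_{F/\mathbb Q}(ns)$ (with $k=\mathbb Q$, $k'=F$) and pulls out an overall factor $\lambda_x^{-ns}$. Comparing the resulting double sum to the formula
\begin{equation*}
E_L([g], s) = \sum_{[x]} \prod_\sigma \frac{|\det g_\sigma|^{n_\sigma s}}{\|x g_\sigma\|^{n n_\sigma s}}
\end{equation*}
from Definition \ref{var eis ser} yields the claimed identity, and the absolute convergence of $E_L$ for $\re s > 1$ follows a posteriori: the integrand is nonnegative, $\mathcal E$ converges absolutely and locally uniformly, and $T_{F/\mathbb Q}/U_F$ is compact by Dirichlet's unit theorem, so Fubini legitimates all interchanges. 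The main delicacy is the change-of-variables step, because $\dtimes t_{F/\mathbb Q}$ is defined only via the projection isomorphism of Lemma \ref{lem dtimes} rather than as a product measure on $\mathbb R_{>0}^{r_F}$; one must verify both that the translating element actually lies in $T_{F/\mathbb Q}$---which is exactly why the normalization by $\lambda_x^{1/d}$ is required---and that this multiplicative translation preserves the pulled-back Haar measure.
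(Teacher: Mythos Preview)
Your proposal is correct and follows essentially the same route as the paper: unfold the $\mathcal O_F^\times$-action to pass from $T_{F/\mathbb Q}/U_F$ to $T_{F/\mathbb Q}$ (picking up the factor $w_F$), then translate by $\rho = \bigl(\|xg_\sigma\|/\prod_{\sigma'}\|xg_{\sigma'}\|^{n_{\sigma'}/d}\bigr)_\sigma \in T_{F/\mathbb Q}$ to extract $\Gamma_{F/\mathbb Q}(ns)$ and the factor $\prod_\sigma\|xg_\sigma\|^{-nn_\sigma s}$. Your remark about the normalization by $\lambda_x^{1/d}$ being needed to land in $T_{F/\mathbb Q}$ is exactly the point the paper is making when it writes out $\rho$ explicitly and checks it lies in $T_{F/\mathbb Q}$.
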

\begin{proof}We see this by the classical argument as follows. 
\begin{align}
LHS &=\int_{T_{F/\mathbb Q} / U_{F}} \sum_{x \in L-\{0\}} \frac{1}{(\sum_{\sigma \in S_F} ||x g_{\sigma}||^{2}t_{\sigma}^2)^{dns/2}}\dtimes t_{F/\mathbb Q}\\
&= w_{F} \int_{T_{F/\mathbb Q}} \sum_{x \in \mathcal O_{F}^{\times}\bs L-\{0\}} \frac{1}{(\sum_{\sigma \in S_F} ||x g_{\sigma}||^{2}t_{\sigma}^2)^{dns/2}}\dtimes t_{F/\mathbb Q}\\
&=w_{F} \sum_{x \in \mathcal O_{F}^{\times}\bs L-\{0\}} \int_{T_{F/\mathbb Q}} \frac{1}{(\sum_{\sigma \in S_F} ||x g_{\sigma}||^{2}t_{\sigma}^2)^{dns/2}}\dtimes t_{F/\mathbb Q}. \label{conv eqn1}
\end{align}
Now, since $\rho :=\Bigg(\frac{||xg_{\sigma}||}{\prod_{\sigma' \in S_F}||xg_{\sigma'}||^{n_{\sigma'}/d}}\Bigg)_{\sigma} \in T_{F/\mathbb Q}$, the Haar measure $\dtimes t_{F/\mathbb Q}$ is invariant under the change of variables $t \mapsto \rho^{-1} t$.
Then we have
\begin{align}
LHS 
=w_{F} \sum_{x \in \mathcal O_{F}^{\times}\bs L-\{0\}} \frac{1}{\prod_{\sigma' \in S_F}||xg_{\sigma'}||^{nn_{\sigma'}s}}\int_{T_{F/\mathbb Q}} \frac{1}{(\sum_{\sigma \in S_F} t_{\sigma}^2)^{dns/2}}\dtimes t_{F/\mathbb Q}.
\end{align}
The proposition now follows from (\ref{gamma factor}).
\end{proof}

\begin{prop}\label{prop eis ser}
Let $\data{\mathfrak a}{L}$ be a parabolic data, and let $\mathscr A\in Cl_F$.
\begin{enumerate}[{\rm (1)}]
\item  Both $E_{\data{\mathfrak a}{L}}(z,s)$ (Definition \ref{eis ser dfn}, Corollary \ref{eis ser lem}) and $E_{L,\mathscr A}(z,s)$ (Definition \ref{var eis ser}) converge absolutely and compactly for $\re s>1$ and $z \in \mathfrak h_F^n$. 
\item The Eisenstein series $E_{\data{\mathfrak a}{L}}(z,s)$ is an automorphic function on $\mathfrak h_F^n$ with respect to $\Gamma_L$, that is, we have
  \begin{equation}
    E_{\data{\mathfrak a}{L}}(\gamma z,s)=E_{\data{\mathfrak a}{L}}(z,s), \quad\text{for all $\gamma \in \Gamma_L$}.
  \end{equation}
\item We have
  \begin{equation}\label{eis ser eqn3}
    E_{L,[\mathfrak a]}(z,s)=\frac{\zeta_F(\mathfrak a^{-1},ns)}{(N\mathfrak a)^{ns}}E_{\data{\mathfrak a}{L}}(z,s).
  \end{equation}
\end{enumerate}
\end{prop}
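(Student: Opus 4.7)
For (1), I would deduce absolute and compact convergence directly from that of $E_L(z,s)$ established in Proposition~\ref{convergence e_l}. By Corollary~\ref{eis ser lem} and Definition~\ref{var eis ser}, both $E_{\data{\mathfrak a}{L}}(z,s)$ and $E_{L,\mathscr A}(z,s)$ are given by the same summand as $E_L(z,s)$, but summed over $\mathcal O_F^{\times} \bs L_{\mathfrak a}$ and $\mathcal O_F^{\times} \bs \bigsqcup_{\mathfrak a \in \mathscr A,\ \text{anti-int.}} L_{\mathfrak a}$ inside $\mathcal O_F^{\times} \bs (L-\{0\})$ (using the disjoint decomposition of Lemma~\ref{a-compo lem}(1)). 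Since each summand is positive for real $s$ and dominated in modulus by its real-part value, absolute and compact convergence on $\mathrm{Re}\,s > 1$ is inherited by these sub-sums. For (2), I would use the original presentation $E_{\data{\mathfrak a}{L}}(z,s) = \sum_{\gamma \in P \bs \Gamma_L} \Det(\gamma z)^s$: for $\gamma_0 \in \Gamma_L$ the map $\gamma \mapsto \gamma \gamma_0$ is a bijection of $P \bs \Gamma_L$, so substituting $z \mapsto \gamma_0 z$ and relabelling yields $E_{\data{\mathfrak a}{L}}(\gamma_0 z, s) = E_{\data{\mathfrak a}{L}}(z, s)$, the standard invariance argument.

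The heart of the proof is (3), for which I would construct a bijection
\begin{equation*}
\Phi: \mathcal O_F^{\times} \bs (\mathfrak a - \{0\}) \times \mathcal O_F^{\times} \bs L_{\mathfrak a} \stackrel{\sim}{\longrightarrow} \mathcal O_F^{\times} \bs L_{[\mathfrak a]}, \qquad ([\alpha], [x]) \mapsto [\alpha x],
\end{equation*}
where $L_{[\mathfrak a]} := \bigsqcup_{\mathfrak a' \in [\mathfrak a],\ \text{anti-int.}} L_{\mathfrak a'}$. For surjectivity, given $y \in L_{\mathfrak a'}$ with $\mathfrak a' \in [\mathfrak a]$ anti-integral, I pick $\alpha \in F^{\times}$ with $\alpha \mathfrak a' = \mathfrak a$; then $\alpha \in \mathfrak a$ (because $1 \in \mathfrak a'$), and $x := y/\alpha$ lies in $L$ and satisfies $Fx \cap L = \alpha^{-1}(Fy \cap L) = \alpha^{-1} \mathfrak a' y = \mathfrak a x$, so $x \in L_{\mathfrak a}$. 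For injectivity, $\alpha x = \alpha' x'$ with $\alpha, \alpha' \in \mathfrak a - \{0\}$ and $x, x' \in L_{\mathfrak a}$ forces $Fx = Fx'$ and $\mathfrak a x = \mathfrak a x'$, which in turn gives $\alpha/\alpha' \in \mathcal O_F^{\times}$ and hence equality of classes on both factors.

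Given $\Phi$, the identities $||\alpha x g_{\sigma}|| = |\sigma(\alpha)| \cdot ||x g_{\sigma}||$ and $\prod_{\sigma} |\sigma(\alpha)|^{n n_{\sigma} s} = |N_{F/\mathbb Q}(\alpha)|^{ns}$ factor the sum for $E_{L,[\mathfrak a]}$ as
\begin{equation*}
E_{L,[\mathfrak a]}([g],s) = \Bigl(\sum_{[\alpha] \in \mathcal O_F^{\times} \bs (\mathfrak a - \{0\})} |N_{F/\mathbb Q}(\alpha)|^{-ns}\Bigr) E_{\data{\mathfrak a}{L}}([g], s),
\end{equation*}
and the standard bijection $[\alpha] \mapsto (\alpha) \mathfrak a^{-1}$ with integral ideals in $[\mathfrak a^{-1}]$, under which $N((\alpha) \mathfrak a^{-1}) = |N_{F/\mathbb Q}(\alpha)|/N\mathfrak a$, rewrites the bracketed factor as $\zeta_F(\mathfrak a^{-1}, ns)/N\mathfrak a^{ns}$, yielding (\ref{eis ser eqn3}). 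The main obstacle I anticipate is verifying $\Phi$ carefully: one must reconcile the two independent $\mathcal O_F^{\times}$-quotients on the domain with the single quotient on the codomain, and check that the ambiguity in choosing $\alpha$ with $\alpha \mathfrak a' = \mathfrak a$ matches exactly the diagonal identification $(\alpha, x) \sim (u \alpha, u^{-1} x)$ of preimages of a common $[y] \in \mathcal O_F^{\times} \bs L_{[\mathfrak a]}$.
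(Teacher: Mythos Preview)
Your proposal is correct and follows essentially the same approach as the paper. For (1) and (2) the arguments are identical. For (3), the paper compresses your bijection $\Phi$ into the single identity $L_{\alpha\mathfrak a}=\frac{1}{\alpha}L_{\mathfrak a}$ (equivalently $E_{\data{\alpha\mathfrak a}{L}}(z,s)=|N_{F/\mathbb Q}(\alpha)|^{ns}E_{\data{\mathfrak a}{L}}(z,s)$) and then sums over anti-integral $\mathfrak a'\in[\mathfrak a]$, which are exactly the $\alpha^{-1}\mathfrak a$ for $\alpha\in\mathcal O_F^{\times}\bs(\mathfrak a-\{0\})$; unpacking that sum is precisely your $\Phi$, so the concern you flag about the two unit quotients dissolves once one sees that the parametrization of anti-integral ideals in $[\mathfrak a]$ already absorbs one of them.
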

\begin{proof}
(1) follows directly from Proposition \ref{convergence e_l}, and (2) follows directly from the definition of $E_{\data{\mathfrak a}{L}}(z,s)$ (Definition \ref{eis ser dfn}).
To prove (3), it suffices to see
  \begin{equation}\label{eis ser eqn4}
    E_{\data{\alpha \mathfrak a}{L}}(z,s)=|N_{F/\mathbb Q}(\alpha)|^{ns}E_{\data{\mathfrak a}{L}}(z,s), \quad\text{for $\alpha \in F^{\times}$}.
  \end{equation}
This follows from the identity $L_{\alpha \mathfrak a}=\frac{1}{\alpha}L_{\mathfrak a}$ and Corollary \ref{eis ser lem}.
\end{proof}

\subsection{Relative Hecke's integral formula for $E/F$}\label{rel int formula}
Now we prove our first main theorem. Let the notations be the same as in Section \ref{heeg obj}. That is, $E/F$ is an extension of number fields of degree $n$, $\varpi$ is the Heegner object associated to a basis $w$ of $E$ over $F$, $L \subset F^n$ is the $\mathcal O_F$-lattice corresponding to a fractional $\mathcal O_E$-ideal $\mathfrak A$ with respect to the basis $w$, and $\Gamma=\Gamma_{L}$ is the arithmetic subgroup of $SL_n(F)$ associated to $L$. Let $\mathfrak a$ be an anti-integral $\mathcal O_F$-ideal and let $\data{\mathfrak a}{L}$ be a parabolic data. We consider the Eisenstein series $E_{\data{\mathfrak a}{L}}(z,s)$ associated to this data.
In this section, we assume $\re s>1$.

\begin{thm}\label{crude int formula}
Put $\Delta_w := N_{F/\mathbb Q}(d_w)$, where $d_w:= (\det W)^2 \in F$ is the ``discriminant" of the basis $w$. 
Then we have 
  \begin{align}
    &\int_{T_{E/F}/U_{E/F}}E_{\data{\mathfrak a}{L}}(\overline{\varpi}(t),s)\dtimes t_{E/F} \nonumber \\
    &=|\Delta_w|^{\frac{s}{2}}R_{E/F}\frac{w_E R_E^{-1} \Gamma_{E/\mathbb Q}(s)}{w_F R_F^{-1}\Gamma_{F/\mathbb Q}(ns)}\sum_{x\in \mathcal O_E^{\times} \bs \mathfrak A_{\mathfrak a}}\frac{1}{|N_{E/\mathbb Q}(x)|^s},
  \end{align}
for $\re s >1$, where $\mathfrak A_{\mathfrak a}$ is the image of $L_{\mathfrak a}$ under the isomorphism $w$ (\ref{isom w}), or equivalently, the subset of $\mathfrak A$ defined in the same way as (\ref{l_b}).
\end{thm}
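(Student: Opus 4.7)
The plan is to substitute $[g]=\varpi(t)=[WI(t)]$ into the explicit formula for $E_{\data{\mathfrak a}{L}}$ in Corollary \ref{eis ser lem}, then unfold the resulting integral using the regulator symmetry. Since $t\in T_{E/F}$ has $N_{E/F}(t)=1$, one gets $|\det I(t)_\sigma|=1$, and $\prod_\sigma|\det W_\sigma|^{n_\sigma s}=|N_{F/\mathbb Q}(\det W)|^s=|\Delta_w|^{s/2}$. Identifying $x\in L_{\mathfrak a}$ with $y=xw\in\mathfrak A_{\mathfrak a}$ via (\ref{isom w}) and unpacking the normalized absolute values at each archimedean place yields
\begin{equation*}
E_{\data{\mathfrak a}{L}}(\varpi(t),s)=|\Delta_w|^{s/2}\sum_{y\in\mathcal O_F^\times\backslash\mathfrak A_{\mathfrak a}}\prod_{\sigma\in S_F}\frac{1}{\bigl(\sum_{\tau\mid\sigma}t_\tau^2|\tau(y)|^2\bigr)^{nn_\sigma s/2}},
\end{equation*}
so that the theorem reduces to an explicit integration of this sum against $\dtimes t_{E/F}$.

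After interchanging sum and integral (justified by absolute convergence in parallel with Proposition \ref{convergence e_l}), I unfold in two stages via the symmetry $f(uy,t)=f(y,\reg^\times_E(u)\,t)$ for $u\in\mathcal O_E^\times$, where $f$ is the inner integrand. For $u\in\mathcal O_{E/F}^\times$, $\reg^\times_E(u)$ lies in $U_{E/F}$, which lets me absorb $U_{E/F}$-translations in the fundamental domain by enlarging $\mathcal O_F^\times$-orbits to $\mathcal O_F^\times\mathcal O_{E/F}^\times$-orbits, at the price of a factor $w_{E/F}/|\mu_F\cap\mu_{E/F}|$ (the kernel of the surjection $\mathcal O_{E/F}^\times/(\mu_F\cap\mu_{E/F})\twoheadrightarrow U_{E/F}$). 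For each fixed $y$, the integral over $T_{E/F}$ is evaluated by rescaling $t_\tau\mapsto t_\tau/|\tau(y)|$ (which translates $T_{E/F}$ within $T_E$) and then returning to $T_{E/F}$ via the diagonal element $\delta_\tau=|\sigma(N_{E/F}(y))|^{1/n}$ of the section $T_F\hookrightarrow T_E$; translation invariance of the Haar measure together with the definition (\ref{gamma factor}) of $\Gamma_{E/F}$ give
\begin{equation*}
\int_{T_{E/F}}\prod_\sigma\frac{1}{\bigl(\sum_{\tau\mid\sigma}t_\tau^2|\tau(y)|^2\bigr)^{nn_\sigma s/2}}\dtimes t_{E/F}=\frac{\Gamma_{E/F}(s)}{|N_{E/\mathbb Q}(y)|^s}.
\end{equation*}
Since $|N_{E/\mathbb Q}(y)|$ is $\mathcal O_E^\times$-invariant, a final reduction from $\mathcal O_F^\times\mathcal O_{E/F}^\times$-orbits to $\mathcal O_E^\times$-orbits contributes a factor $[\mathcal O_E^\times:\mathcal O_F^\times\mathcal O_{E/F}^\times]$.

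The last step is to reconcile the constants. Lemma \ref{lem gamma factor} gives $\Gamma_{E/F}(s)=n^{1-r_F}\Gamma_{E/\mathbb Q}(s)/\Gamma_{F/\mathbb Q}(ns)$; Lemma \ref{lem regulator} gives $[U_E:U_FU_{E/F}]=n^{r_F-1}R_{E/F}R_F/R_E$; and elementary group theory produces $[\mathcal O_E^\times:\mathcal O_F^\times\mathcal O_{E/F}^\times]=[U_E:U_FU_{E/F}]\cdot[\mu_E:\mu_F\mu_{E/F}]$, together with $[\mu_E:\mu_F\mu_{E/F}]=w_E|\mu_F\cap\mu_{E/F}|/(w_Fw_{E/F})$, the latter derived from the identity $\mu_E\cap\mathcal O_F^\times\mathcal O_{E/F}^\times=\mu_F\mu_{E/F}$ (which comes from analyzing $N_{E/F}$ of a torsion product $uv$). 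Substituting these relations, the torsion factors and powers of $n$ telescope to leave exactly $R_{E/F}\,w_ER_E^{-1}/(w_FR_F^{-1})\cdot\Gamma_{E/\mathbb Q}(s)/\Gamma_{F/\mathbb Q}(ns)$. The main obstacle will be the two-step unfolding and the torsion-index bookkeeping: care is required when passing between $\mathcal O_F^\times$, $\mathcal O_{E/F}^\times$, $\mathcal O_E^\times$ and their regulator images to correctly handle $\mu_F\cap\mu_{E/F}=\mu_F[n]$ (roots of unity in $F$ killed by the $n$th power) and to ensure the final constant collapses cleanly.
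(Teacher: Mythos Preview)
Your proof is correct and follows the same unfolding strategy as the paper: substitute into Corollary~\ref{eis ser lem}, unfold the integral over $T_{E/F}/U_{E/F}$ using the $\mathcal O_{E/F}^\times$-symmetry (Lemma~\ref{lem psi}), rescale to extract $\Gamma_{E/F}(s)/|N_{E/\mathbb Q}(y)|^s$, and assemble constants via Lemmas~\ref{lem gamma factor} and~\ref{lem regulator}. The only difference is the torsion bookkeeping: you track $\mu_F\cap\mu_{E/F}$ and $[\mu_E:\mu_F\mu_{E/F}]$ explicitly and let them cancel at the end, whereas the paper sidesteps this by fixing compatible maximal torsion-free subgroups $\tilde U_F,\tilde U_{E/F}\subset\tilde U_E$ at the outset, so that all indices are computed among free abelian groups and the only torsion contributions are a clean $1/w_F$ and $w_E$.
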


\begin{proof}
 The proof is similar to that of Proposition \ref{convergence e_l}. Let $W=(W_{\sigma})_{\sigma} \in G(\mathbb R)$, and $I(t)=(I(t)_{\sigma})_{\sigma} \in G(\mathbb R)$ ($t \in T_{E/F}$) be as in Section \ref{heeg obj} so that $\varpi(t)=[WI(t)]$. Let us fix a maximal  torsion free subgroup $\tilde{U}_E$ (resp. $\tilde{U}_F,~ \tilde{U}_{E/F}$) of $\mathcal O_E^{\times}$ (resp. $\mathcal O_F^{\times},~  \mathcal O_{E/F}^{\times}$) so that $\tilde{U}_{F}, \tilde{U}_{E/F} \subset \tilde{U}_{E}$. Note that we have $\reg^{\times}_{E/F}: \tilde{U}_{E/F} \overset{\sim}{\rightarrow} U_{E/F}$, etc.
Using Corollary \ref{eis ser lem}, we obtain
 \begin{align}
   LHS 
   &= \frac{1}{w_F} \int_{T_{E/F}/U_{E/F}} \sum_{x \in \tilde U_F \bs L_{\mathfrak a}} \prod_{\sigma}\frac{|\det W_{\sigma}|^{n_{\sigma}s}}{||xW_{\sigma}I(t)_{\sigma}||^{nn_{\sigma}s}} \dtimes t_{E/F}\\
   &= \frac{1}{w_F} \int_{T_{E/F}/U_{E/F}} \sum_{u\in \tilde U_{E/F}}\sum_{x \in \tilde U_F \tilde U_{E/F} \bs L_{\mathfrak a}} \prod_{\sigma}\frac{|\det W_{\sigma}|^{n_{\sigma}s}}{||x\varrho_w(u)W_{\sigma}I(t)_{\sigma}||^{nn_{\sigma}s}} \dtimes t_{E/F}\\
   &= \frac{1}{w_F} \int_{T_{E/F}/U_{E/F}} \sum_{u\in U_{E/F}}\sum_{x \in \tilde U_F \tilde U_{E/F} \bs L_{\mathfrak a}} \prod_{\sigma}\frac{|\det W_{\sigma}|^{n_{\sigma}s}}{||xW_{\sigma}I(ut)_{\sigma}||^{nn_{\sigma}s}} \dtimes t_{E/F} \label{use lem psi}\\
   &= \frac{1}{w_F} \int_{T_{E/F}}\sum_{x \in \tilde U_F \tilde U_{E/F} \bs L_{\mathfrak a}} \prod_{\sigma}\frac{|\det W_{\sigma}|^{n_{\sigma}s}}{||xW_{\sigma}I(t)_{\sigma}||^{nn_{\sigma}s}} \dtimes t_{E/F} \label{eq 5}
 \end{align}
 Here, the action of $\tilde U_{E/F}$ on $L_{\mathfrak a}$ is defined by the regular representation $\varrho_w$, and the equality (\ref{use lem psi}) follows from Lemma \ref{lem psi}. By putting $z=xw$, we obtain
  \begin{align}
LHS &= \frac{|\Delta_w|^{\frac{s}{2}}}{w_F} \sum_{z \in \tilde U_F \tilde U_{E/F} \bs \mathfrak A_{\mathfrak a}}\int_{T_{E/F}} \prod_{\sigma}\frac{1}{(\sum_{\tau \in S_{E,\sigma}}|\tau (z)|^2t_{\tau}^2)^{nn_{\sigma}s/2}} \dtimes t_{E/F}\label{eq 6}
 \end{align}
 Now, since $\rho :=\Big(|\tau(z)|\prod_{\tau' \in S_{E,\sigma_{\tau}}}|\tau'(z)|^{-n_{\tau'}/nn_{\sigma_{\tau}}}\Big)_{\tau} \in T_{E/F}$, where $\sigma_{\tau}$ is the place of $F$ below $\tau$, the Haar measure $\dtimes t_{E/F}$ is invariant under the change of variables $t \mapsto \rho^{-1} t$. Therefore, we have
\begin{align}
LHS &= \frac{|\Delta_w|^{\frac{s}{2}}}{w_F} \sum_{z \in \tilde U_F \tilde U_{E/F} \bs \mathfrak A_{\mathfrak a}} \frac{1}{|N_{E/\mathbb Q}(z)|^{s}} \int_{T_{E/F}}\prod_{\sigma}\frac{1}{(\sum_{\tau \in S_{E,\sigma}}t_{\tau}^2)^{nn_{\sigma}s/2}} \dtimes t_{E/F}\\
   &=|\Delta_w|^{\frac{s}{2}}[U_E: U_F U_{E/F}]\Gamma_{E/F}(s) \frac{w_E}{w_F} \sum_{z \in \mathcal O_{E}^{\times} \bs \mathfrak A_{\mathfrak a}} \frac{1}{|N_{E/\mathbb Q}(z)|^{s}}
\end{align} 
Now the theorem follows from Lemma \ref{lem gamma factor} and Lemma \ref{lem regulator}.
\end{proof}

\begin{dfn}\label{relative partial zeta}
Let $\mathfrak A$ be the same as above. For $\mathscr A \in Cl_F$, we define the partial zeta function $\zeta_{E/F,\mathscr A}(\mathfrak A^{-1},s)$ associated to $\mathfrak A^{-1}$ relative to $\mathscr A$ as
\begin{equation}
\zeta_{E/F,\mathscr A}(\mathfrak A^{-1},s):=N(\mathfrak A)^s \sum_{\substack{\mathfrak a \in \mathscr A\\ \text{anti-int.}}}  \sum_{x \in \mathcal O_E^{\times} \bs \mathfrak A_{\mathfrak a}} \frac{1}{|N_{E/\mathbb Q}(x)|^s},
\end{equation}
where $N(\mathfrak A)$ is the absolute norm of the fractional $\mathcal O_E$-ideal $\mathfrak A$. 
\end{dfn}

Now, using Proposition \ref{prop eis ser} (3), we can rewrite Theorem \ref{crude int formula} as follows.
\begin{thm}[Relative Hecke's integral formula]\label{hecke's int formula}~

Let $[\mathfrak a] \in Cl_F$ be the ideal class of $\mathfrak a$. Put 
\begin{equation}
c_{E/F}(s) := R_{E/F}\dfrac{w_E R_E^{-1} \Gamma_{E/\mathbb Q}(s)}{w_F R_F^{-1}\Gamma_{F/\mathbb Q}(ns)}.
\end{equation}
Then we obtain
\begin{align}
  \int_{T_{E/F}/U_{E/F}} E_{\data{\mathfrak a}{L}}(\overline{\varpi}(t),s)\dtimes t_{E/F} 
    =|\Delta_w|^{\frac{s}{2}}c_{E/F}(s)\frac{N(\mathfrak A)^{-s}}{N(\mathfrak a)^{-ns} } \frac{ \zeta_{E/F,[\mathfrak a]}(\mathfrak A^{-1},s)}{ \zeta_F(\mathfrak a^{-1},ns)}.
  \end{align}
\end{thm}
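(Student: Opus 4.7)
The target formula is essentially a cosmetic repackaging of Theorem \ref{crude int formula}: the RHS of the crude formula is already $|\Delta_w|^{s/2}c_{E/F}(s)\sum_{x\in\mathcal O_E^{\times}\bs \mathfrak A_{\mathfrak a}}|N_{E/\mathbb Q}(x)|^{-s}$, so all that remains is to identify the Dirichlet series over $\mathfrak A_{\mathfrak a}$ with the ratio $\zeta_{E/F,[\mathfrak a]}(\mathfrak A^{-1},s)/\zeta_F(\mathfrak a^{-1},ns)$ (up to the norm factors). The plan is to pass from the single anti-integral ideal $\mathfrak a$ to the \emph{entire} ideal class $[\mathfrak a]$ via Proposition \ref{prop eis ser}(3), which is exactly the bridge linking $E_{\data{\mathfrak a}{L}}$ to the class-summed Eisenstein series $E_{L,[\mathfrak a]}$.

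Concretely, I would proceed as follows. First, fix any anti-integral representative $\mathfrak a\in[\mathfrak a]$ and apply Theorem \ref{crude int formula} to $\mathfrak a'$, for each anti-integral $\mathfrak a'\in[\mathfrak a]$. Summing those identities over all such $\mathfrak a'$ and swapping sum and integral (justified by the absolute convergence established in Proposition \ref{prop eis ser}(1)) one obtains
\begin{align*}
\int_{T_{E/F}/U_{E/F}} E_{L,[\mathfrak a]}(\overline{\varpi}(t),s)\,\dtimes t_{E/F}
&=|\Delta_w|^{s/2}c_{E/F}(s)\sum_{\substack{\mathfrak a'\in[\mathfrak a]\\ \text{anti-int.}}}\sum_{x\in\mathcal O_E^{\times}\bs \mathfrak A_{\mathfrak a'}}\frac{1}{|N_{E/\mathbb Q}(x)|^{s}}\\
&=|\Delta_w|^{s/2}c_{E/F}(s)\,N(\mathfrak A)^{-s}\,\zeta_{E/F,[\mathfrak a]}(\mathfrak A^{-1},s),
\end{align*}
where the second equality is just Definition \ref{relative partial zeta}.

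Second, I invoke Proposition \ref{prop eis ser}(3), i.e.\ $E_{L,[\mathfrak a]}(z,s)=\zeta_F(\mathfrak a^{-1},ns)(N\mathfrak a)^{-ns}E_{\data{\mathfrak a}{L}}(z,s)$, to substitute on the left-hand side of the display above. Pulling the scalar $\zeta_F(\mathfrak a^{-1},ns)/N(\mathfrak a)^{ns}$ out of the integral and solving for $\int_{T_{E/F}/U_{E/F}} E_{\data{\mathfrak a}{L}}(\overline{\varpi}(t),s)\,\dtimes t_{E/F}$ yields exactly
\[
|\Delta_w|^{s/2}\,c_{E/F}(s)\,\frac{N(\mathfrak A)^{-s}}{N(\mathfrak a)^{-ns}}\,\frac{\zeta_{E/F,[\mathfrak a]}(\mathfrak A^{-1},s)}{\zeta_F(\mathfrak a^{-1},ns)},
\]
which is the claimed formula.

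There is essentially no obstacle; the only point that merits a line of justification is the interchange of the integral over $T_{E/F}/U_{E/F}$ with the sum over anti-integral ideals $\mathfrak a'\in[\mathfrak a]$, which follows at once from the absolute convergence in Proposition \ref{prop eis ser}(1) combined with the positivity of each integrand. All nontrivial analytic work (the orbit-integral computation, the appearance of $R_{E/F}$ and $\Gamma_{E/\mathbb Q}(s)/\Gamma_{F/\mathbb Q}(ns)$, and the discriminant factor $|\Delta_w|^{s/2}$) has already been done in the proof of Theorem \ref{crude int formula}, so the present statement is a formal consequence.
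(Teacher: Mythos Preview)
Your proposal is correct and follows exactly the approach indicated in the paper, which simply states that the theorem is obtained by rewriting Theorem \ref{crude int formula} using Proposition \ref{prop eis ser}(3). Your write-up merely spells out the two-step mechanism (sum over anti-integral $\mathfrak a'\in[\mathfrak a]$ to pass to $E_{L,[\mathfrak a]}$ and invoke Definition \ref{relative partial zeta}, then apply Proposition \ref{prop eis ser}(3) to return to $E_{\data{\mathfrak a}{L}}$) that the paper leaves implicit.
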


\begin{cor}\label{cor int formula}
Taking the sum over the anti-integral ideals $\mathfrak a$, we obtain
\begin{align} 
\int_{T_{E/F}/U_{E/F}} E_{L}(\overline{\varpi}(t),s)\dtimes t_{E/F} =|\Delta_w|^{\frac{s}{2}}c_{E/F}(s)N(\mathfrak A)^{-s} \zeta_{E}(\mathfrak A^{-1},s).
\end{align}
\end{cor}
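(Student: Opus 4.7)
\textbf{Proof plan for Corollary \ref{cor int formula}.} The idea is to reassemble the sum defining $E_{L}$ from the pieces $E_{\data{\mathfrak a}{L}}$ controlled by Theorem \ref{hecke's int formula}, one ideal class of $F$ at a time, and then recognize that summing the relative partial zeta functions $\zeta_{E/F,\mathscr A}$ over $\mathscr A\in Cl_F$ reconstructs the full Dedekind partial zeta function $\zeta_E(\mathfrak A^{-1},s)$.

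First, I would fix a representative $\mathfrak a$ of an arbitrary class $\mathscr A\in Cl_F$ and combine Theorem \ref{hecke's int formula} with the relation
\begin{equation*}
E_{L,[\mathfrak a]}(z,s)=\frac{\zeta_F(\mathfrak a^{-1},ns)}{N(\mathfrak a)^{ns}}\,E_{\data{\mathfrak a}{L}}(z,s)
\end{equation*}
from Proposition \ref{prop eis ser} (3). Integrating this identity over $T_{E/F}/U_{E/F}$ and substituting Theorem \ref{hecke's int formula}, both the factor $\zeta_F(\mathfrak a^{-1},ns)$ and the factor $N(\mathfrak a)^{ns}$ cancel with their reciprocals, yielding the clean identity
\begin{equation*}
\int_{T_{E/F}/U_{E/F}} E_{L,\mathscr A}(\overline{\varpi}(t),s)\dtimes t_{E/F}
=|\Delta_w|^{\tfrac{s}{2}}c_{E/F}(s)N(\mathfrak A)^{-s}\zeta_{E/F,\mathscr A}(\mathfrak A^{-1},s),
\end{equation*}
which in particular does not depend on the chosen representative $\mathfrak a$.

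Next I would sum over $\mathscr A\in Cl_F$. The left-hand side becomes $\int E_{L}(\overline{\varpi}(t),s)\dtimes t_{E/F}$ after interchanging the finite sum with the integral, while on the right I am left with $\sum_{\mathscr A\in Cl_F}\zeta_{E/F,\mathscr A}(\mathfrak A^{-1},s)$. Unwinding Definition \ref{relative partial zeta}, this equals
\begin{equation*}
N(\mathfrak A)^{s}\sum_{\mathfrak a:\text{anti-int.}}\sum_{x\in\mathcal O_E^{\times}\bs \mathfrak A_{\mathfrak a}}\frac{1}{|N_{E/\mathbb Q}(x)|^{s}}
=N(\mathfrak A)^{s}\sum_{x\in\mathcal O_E^{\times}\bs (\mathfrak A-\{0\})}\frac{1}{|N_{E/\mathbb Q}(x)|^{s}},
\end{equation*}
where the simplification uses the disjoint decomposition $\mathfrak A-\{0\}=\coprod_{\mathfrak a:\text{anti-int.}}\mathfrak A_{\mathfrak a}$, obtained by transporting Lemma \ref{a-compo lem} (1) along the isomorphism $w: L\xrightarrow{\sim}\mathfrak A$. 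Finally, this single unit-orbit sum equals $\zeta_E(\mathfrak A^{-1},s)$ by the classical bijection between $\mathcal O_E^{\times}$-orbits on $\mathfrak A-\{0\}$ and integral ideals in the class $[\mathfrak A^{-1}]$ (sending $x$ to $(x)\mathfrak A^{-1}$, of norm $|N_{E/\mathbb Q}(x)|/N(\mathfrak A)$), which yields the claimed formula.

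No serious obstacle is expected: the only analytic input is the absolute convergence already established in Proposition \ref{prop eis ser} (1), and the rest is bookkeeping of norms and class-group sums. The main thing to keep track of is that the $N(\mathfrak a)^{\pm ns}$ and $\zeta_F(\mathfrak a^{-1},ns)$ factors in Theorem \ref{hecke's int formula} are exactly absorbed by the normalization of $E_{L,\mathscr A}$ in Proposition \ref{prop eis ser} (3), which is what makes the final formula class-sum clean.
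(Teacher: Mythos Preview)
Your proof is correct and follows essentially the same idea as the paper. The paper's one-line justification (``taking the sum over the anti-integral ideals $\mathfrak a$'') most naturally refers to summing Theorem~\ref{crude int formula} directly over all anti-integral $\mathfrak a$, using $E_L=\sum_{\mathfrak a}E_{\data{\mathfrak a}{L}}$ on the left and $\mathfrak A-\{0\}=\coprod_{\mathfrak a}\mathfrak A_{\mathfrak a}$ on the right; your route through Theorem~\ref{hecke's int formula}, Proposition~\ref{prop eis ser}~(3), and the class-by-class identity $\int E_{L,\mathscr A}=|\Delta_w|^{s/2}c_{E/F}(s)N(\mathfrak A)^{-s}\zeta_{E/F,\mathscr A}(\mathfrak A^{-1},s)$ is a mild repackaging of the same computation that also makes the decomposition $\zeta_E(\mathfrak A^{-1},s)=\sum_{\mathscr A}\zeta_{E/F,\mathscr A}(\mathfrak A^{-1},s)$ explicit along the way.
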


\begin{rmk}
In the case where $F=\mathbb Q$ or $n=2$, Corollary \ref{cor int formula} gives the results of Hecke~\cite[p.~370]{hecke17} and Yamamoto~\cite[Theorem 3.1.2]{yamamoto08} respectively. It should be also remarked that in the case where $F$ is imaginary quadratic and $n=2$, or $F=\mathbb Q$ and $E$ is totally real, Harder~\cite{harder82} and Sczech~\cite{sczech93} respectively obtain results which can be seen as a ``cohomological interpretation'' of this theorem, and deduce the rationality of the special values of zeta functions. 
\end{rmk}



\section{The Fourier expansion of Eisenstein series}\label{fourier exp}
In this preliminary section, we present a kind of the Fourier expansion formula of the Eisenstein series $E_{L,[\mathfrak a]}(z,s)$ for $\re s>1$. This shows the analytic continuation of the Eisenstein series, and enables us to compute its residue and the constant term at $s=1$. As remarked in Section \ref{intro}, the Fourier coefficients (Theorem \ref{fourier coeff thm}) may be obtained  from the more general theory. However here we present an explicit formula in terms of ideal classes of $F$, and give a proof in a self-contained way.

\subsection{Setting}
Let $F/\mathbb Q$ be a number field of degree $d$, and let $\data{\mathfrak a}{L}$ be as before. For another $\mathcal O_F$-lattice $L' \subset F^n$ which is isomorphic to $L$, let $\gamma \in GL_n(F)$ be a matrix such that $L\gamma=L'$. Let us consider the parabolic data $\data{\mathfrak a}{L} \overset{\gamma}{\rightarrow} L'$. Then we easily see that
\begin{equation}
E_{\data{\mathfrak a}{L}}(z,s)=|N_{F/\mathbb Q}(\det \gamma)|^s E_{\data{\mathfrak a}{L'}}(\gamma^{-1}z,s).
\end{equation}
On the other hand, by Proposition \ref{str thm} any $\mathcal O_F$-lattice $L \subset F^n$ is isomorphic to the lattice of the form $\mathfrak{a}_1 \oplus \cdots \oplus \mathfrak{a}_n \subset F^n$ for some anti-integral ideals $\mathfrak{a}_i \subset F$ ($i=1, \dots, n$).
Therefore, in this section we assume $L=\mathfrak{a}_1 \oplus \cdots \oplus \mathfrak{a}_n \subset F^n$. We also assume $n \geq 2$.


Let $\mathfrak a_n \overset{i_n}{\hookrightarrow} L$ be the $n$-th inclusion, which is another parabolic data.
We consider the Fourier expansion of the Eisenstein series $E_{L, [\mathfrak a]}(z,s)$ at the ``cusp" corresponding to the parabolic subgroup $P_{\data{\mathfrak a_n}{L}}=\Stab_{\Gamma_L}(\data{\mathfrak a_n}{L})$ associated to the data $\data{\mathfrak a_n}{L}$.
Let 
\begin{align}\label{def of N}
\footnotesize{
N:=\left\{ \left(  \begin{matrix} 
     ~&~& ~ & \xi_1 \\
     ~&I&~ & \vdots \\
     ~&~& ~ & \xi_{n-1} \\
     0&\cdots& 0 & 1 \\      
   \end{matrix}\right)~\middle|~ \xi_{i} \in \mathfrak a_i^{-1}\mathfrak a_n \right\} \subset
N(\mathbb R):=\left\{ \left(  \begin{matrix} 
     ~&~& ~ & \xi_1 \\
     ~&I&~ & \vdots \\
     ~&~& ~ & \xi_{n-1} \\
     0&\cdots& 0 & 1 \\      
   \end{matrix}\right)~\middle|~ \xi_{i} \in F_{\infty} \right\}  }
\end{align}
be the nilpotent radical of $P_{\data{\mathfrak a_n}{L}}$, and its canonical lifting into $G(\mathbb R)$. In the following, we identify $N(\mathbb R)$ with $F_{\infty}^{n-1}$ via the isomorphism 
\begin{align}\label{isom N}
\small{
F_{\infty}^{n-1} \overset{\sim}{\rightarrow} N(\mathbb R);~ (\xi_1, \dots , \xi_{n-1}) \mapsto  
\left(  \begin{matrix} 
     ~&~& ~ & \xi_1 \\
     ~&I&~ & \vdots \\
     ~&~& ~ & \xi_{n-1} \\
     0&\cdots& 0 & 1 \\      
   \end{matrix}\right)},
\end{align}
which clearly induces an isomorphism $\mathfrak{a}_1^{-1}\mathfrak{a}_n \oplus \cdots \oplus \mathfrak{a}_{n-1}^{-1}\mathfrak{a}_n \overset{\sim}{\rightarrow} N$. 
We also identify $N$ with the $\mathcal O_F$-lattice $\mathfrak{a}_1^{-1}\mathfrak{a}_n \oplus \cdots \oplus \mathfrak{a}_{n-1}^{-1}\mathfrak{a}_n$ in $F_{\infty}^{n-1}$ via this isomorphism. We denote by $d\xi$ the Haar measure on $N(\mathbb R)\simeq F_{\infty}^{n-1}$ which is the product measure of $dx_F$ on $F_{\infty}$ defined in Section \ref{haar meas}. 
We denote by 
\begin{equation}
N^{\vee} :=\{y \in F_{\infty}^{n-1} \mid \braket{x,y} \in \mathbb Z, ~ \forall x \in N \} = \mathfrak a_1 \mathfrak a_n^{-1} \mathfrak d_F^{-1} \oplus \cdots \oplus \mathfrak a_{n-1} \mathfrak a_n^{-1} \mathfrak d_F^{-1}
\end{equation} 
the dual lattice of $N$ with respect to the pairing $\braket{~,~}: F_{\infty}^{n-1} \times F_{\infty}^{n-1} \rightarrow \mathbb R$.
\if0
We define the pairing $\braket{~,~}: F_{\infty}^{n-1} \times F_{\infty}^{n-1} \rightarrow \mathbb R$ by 
\begin{equation}
\braket{(x_1, \dots, x_{n-1}),(y_1, \dots, y_{n-1})}:= \sum_{i=1}^{n-1} Tr_{F/\mathbb Q}(x_iy_i).
\end{equation}
Let $N^{\vee} :=\{y \in F_{\infty}^{n-1} \mid \braket{x,y} \in \mathbb Z, ~ \forall x \in N \} = \mathfrak a_1 \mathfrak a_n^{-1} \mathfrak d_F^{-1} \oplus \cdots \oplus \mathfrak a_{n-1} \mathfrak a_n^{-1} \mathfrak d_F^{-1}$.
\fi

\begin{dfn}[Fourier coefficients]
For  $* \in \{\data{\mathfrak a}{L}, (L,[\mathfrak a])\}$ and $\nu \in N^{\vee}$, we define the $\nu$-th Fourier coefficient of $E_{*}(z,s)$ as
\begin{equation}
\mathscr I_{*,\nu}(z,s) := \frac{1}{vol(N\bs N(\mathbb R))} \int_{N \bs N(\mathbb R)} E_{*}(\xi z,s) e^{-2\pi i \braket{\nu,\xi}} d\xi.
\end{equation}
\end{dfn}
Then, by the general theory of the Fourier expansion, we have 
\begin{equation}\label{fourier exp eqn}
E_{*}(z,s) = \sum_{\nu \in N^{\vee}} \mathscr I_{*,\nu}(z,s).
\end{equation}

\paragraph{Iwasawa normal form}
In order to state the Fourier expansion formula explicitly, for $z  \in \mathfrak h_F^n$, we always take a representative $g  \in G(\mathbb R)=GL_n(F_{\infty})$ of $z$ (i.e.,  $z=[g]$) of the following form called the Iwasawa normal form: $g=XY$ with 
\begin{align}
\small{X = \left(  \begin{matrix} 
     1& & x_{ij}  \\
     ~&\ddots&    \\
     \hsymb{0}&~& 1   \\
   \end{matrix}\right),  \quad~
Y =  \left(  \begin{matrix} 
     y'_{1}&~& ~ & \hsymb{0} \\
     ~&\ddots& ~ & ~ \\
     ~&~& y'_{n-1} & ~ \\
     \hsymb{0}&~& ~ & 1 \\      
   \end{matrix}\right),}
\end{align}
where $x_{ij} \in F_{\infty}$, and $y'_{i}=y_{i}\cdots y_{n-1}$ for $y_{i} \in T_F=\prod_{\sigma}\mathbb R_{>0}$. 
Note that the existence and the uniqueness of the Iwasawa normal form is guaranteed by the Iwasawa decomposition of $GL_n(\mathbb R)$ and $GL_n(\mathbb C)$.
With the above notation, for $2\leq j \leq n$, we set $\mathbf x_{j}:={}^t\!(x_{1j}, \dots, x_{j-1, j}) \in F_{\infty}^{j-1}$, which is essentially the $j$-th column of $X$. 
Furthermore, for $1 \leq j \leq n-1$, we put 
\begin{align}
\small{
X^{(j)} :=  \left(  \begin{matrix} 
     1&x_{12}& \cdots &x_{1,n-j}  \\
     ~&\ddots& \ddots & \vdots \\
     ~&~& \ddots & x_{n-j-1,n-j} \\
     \hsymb{0}&~& ~ & 1 \\      
   \end{matrix}\right),  \quad~
Y^{(j)} :=  \left(  \begin{matrix} 
     y_{1}\cdots y_{n-j}&~&~&  \hsymb{0} \\
     ~&y_{2}\cdots y_{n-j}&~&~ \\
     ~&~&\ddots& ~  \\
      \hsymb{0}&~&~& y_{n-j}  \\
   \end{matrix}\right),}
\end{align}
and set $g^{(j)}:= X^{(j)}Y^{(j)} \in GL_{n-j}(F_{\infty})$.
We fix these notations throughout the paper.

\if0
Furthermore, we decompose $g_{\sigma}$ as
\begin{align}\label{g_sigma bar}
g_{\sigma}  = \left(  \begin{matrix} 
     \overline{g}_{\sigma}& b_{\sigma} \\
     &1 \\      
   \end{matrix}\right),
\end{align}
where $\overline{g}_{\sigma}$ is the upper-left $(n-1)\times (n-1)$ submatrix of $g_{\sigma}$, and $b_{\sigma}$ is upper-right $(n-1)\times 1$ submatrix identified with an element in $F_{\sigma}^{n-1}$. Put $b=(b_{\sigma})_{\sigma} \in F_{\infty}^{n-1}$.

\begin{dfn}[Fourier coefficients]
For  $* \in \{\data{\mathfrak a}{L}, (L,[\mathfrak a]), L\}$ and $\nu \in N^{\vee}$, we define the $\nu$-th Fourier coefficient of $E_{*}(z,s)$ as
\begin{equation}
\mathscr I_{*,\nu}(z,s) := \frac{1}{vol(N\bs N(\mathbb R))} \int_{N \bs N(\mathbb R)} E_{*}(nz,s) e^{-2\pi i \braket{\nu,n}} dn.
\end{equation}
\end{dfn}
Then by the general theory of the Fourier expansion we have 
\begin{equation}
E_{*}(z,s) = \sum_{\nu \in N^{\vee}} \mathscr I_{*,\nu}(z,s).
\end{equation}
\fi

\paragraph{Some arithmetic functions and Bessel function}
We prepare some arithmetic functions and Bessel function. 

Let $\mathfrak m \subset \mathcal O_F$ be a non-zero integral ideal. We define 
\begin{enumerate}[--]
\item (Euler's totient function) $\varphi(\mathfrak m) := \#(\mathcal O_F /\mathfrak m)^{\times}$,
\item (Divisor sum) 
$\sigma_{s}(\mathfrak m,\chi) := \sum_{\mathfrak n |\mathfrak m}\chi(\mathfrak n) N \mathfrak n^s$, \quad ($\chi \in \Hom(Cl_F, \mathbb C^{\times})$, $s \in \mathbb C$). 
\end{enumerate}
Furthermore, let $\mathfrak b \subset F$ be a fractional $\mathcal O_F$-ideal such that $\mathfrak b \subset \mathfrak d_F^{-1}$. Then, since $\mathcal O_F$ is a Dedekind domain, $\mathcal O_F/\mathfrak m$ is a principal ideal ring, and thus $\mathfrak b \mathfrak m^{-1}/ \mathfrak b$ is isomorphic to $\mathcal O_F/\mathfrak m$ as an $\mathcal O_F/\mathfrak m$-module. We denote by $(\mathfrak b \mathfrak m^{-1}/ \mathfrak b)^{\times}$ the set of generators of $\mathfrak b \mathfrak m^{-1}/ \mathfrak b$ as an $\mathcal O_F/\mathfrak m$-module. We define the special case of the Gauss sum as
\begin{enumerate}[--]
\item (Ramanujan sum) $\displaystyle \tau(\mathfrak m, \mathfrak b) :=\sum_{x \in (\mathfrak b \mathfrak m^{-1}/ \mathfrak b)^{\times}} e^{2\pi i Tr_{F/ \mathbb Q}(x)}$,
\end{enumerate}
where the sum is taken over a system of representatives of $(\mathfrak b \mathfrak m^{-1}/ \mathfrak b)^{\times}$. This is well-defined since $\mathfrak b \subset \mathfrak d_F^{-1}$.
\begin{lem}\label{arith func lem}
Let $\mathfrak m \subset \mathcal O_F$ be a non-zero integral ideal, and let $\mathfrak b \subset F$ be a fractional $\mathcal O_F$-ideal such that $\mathfrak b \subset \mathfrak d_F^{-1}$, then we have
\begin{align}
\sum_{\mathfrak a | \mathfrak m} \varphi(\mathfrak a) =N\mathfrak m,  \quad
\sum_{\mathfrak a | \mathfrak m} \tau(\mathfrak a, \mathfrak b) = 
\begin{cases}
N\mathfrak m &(\mathfrak m | \mathfrak b \mathfrak d_F)\\
0 &(otherwise)
\end{cases}
\end{align}
\end{lem}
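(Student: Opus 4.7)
\bigskip

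\noindent\textbf{Plan for Lemma \ref{arith func lem}.} Both identities will be deduced from the same structural fact: because $\mathcal{O}_F$ is a Dedekind domain, the quotient $\mathcal{O}_F/\mathfrak{m}$ is a principal ideal ring, so for any fractional ideal $\mathfrak{b}$ the cyclic $\mathcal{O}_F$-module $\mathfrak{b}\mathfrak{m}^{-1}/\mathfrak{b}$ is canonically isomorphic to $\mathcal{O}_F/\mathfrak{m}$. I will partition this module according to the annihilator of each element and then recognize both sums as the cardinalities of its pieces (for (1)) or as a character sum over the whole module (for (2)).

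\smallskip
First I would verify the partition. For $\mathfrak{a}\mid\mathfrak{m}$, the $\mathcal{O}_F$-submodule of $\mathfrak{b}\mathfrak{m}^{-1}/\mathfrak{b}$ killed by $\mathfrak{a}$ is precisely $\mathfrak{b}\mathfrak{a}^{-1}/\mathfrak{b}$, which is cyclic and isomorphic to $\mathcal{O}_F/\mathfrak{a}$. An element $x \in \mathfrak{b}\mathfrak{m}^{-1}/\mathfrak{b}$ has annihilator exactly $\mathfrak{a}$ iff $x$ generates this submodule, i.e.\ $x \in (\mathfrak{b}\mathfrak{a}^{-1}/\mathfrak{b})^{\times}$. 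Since every element of $\mathfrak{b}\mathfrak{m}^{-1}/\mathfrak{b}$ has a unique annihilator dividing $\mathfrak{m}$, we obtain the disjoint decomposition
\begin{equation*}
\mathfrak{b}\mathfrak{m}^{-1}/\mathfrak{b} = \bigsqcup_{\mathfrak{a}\mid\mathfrak{m}} (\mathfrak{b}\mathfrak{a}^{-1}/\mathfrak{b})^{\times}.
\end{equation*}
Specializing to $\mathfrak{b}=\mathcal{O}_F$ (which does satisfy $\mathfrak{b}\subset \mathfrak{d}_F^{-1}$, though only the cardinality matters here) and counting gives $\sum_{\mathfrak{a}\mid\mathfrak{m}}\varphi(\mathfrak{a})=\#(\mathcal{O}_F/\mathfrak{m})=N\mathfrak{m}$, which is identity (1).

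\smallskip
For identity (2), I would sum the additive character $e^{2\pi i\,\mathrm{Tr}_{F/\mathbb{Q}}(\,\cdot\,)}$ against the above partition. The hypothesis $\mathfrak{b}\subset \mathfrak{d}_F^{-1}$ is exactly what makes this character well-defined on $\mathfrak{b}\mathfrak{m}^{-1}/\mathfrak{b}$, so
\begin{equation*}
\sum_{\mathfrak{a}\mid\mathfrak{m}} \tau(\mathfrak{a},\mathfrak{b})
= \sum_{x\in \mathfrak{b}\mathfrak{m}^{-1}/\mathfrak{b}} e^{2\pi i\,\mathrm{Tr}_{F/\mathbb{Q}}(x)}.
\end{equation*}
The right-hand side is a sum of a character over a finite abelian group, so by orthogonality it equals $\#(\mathfrak{b}\mathfrak{m}^{-1}/\mathfrak{b}) = N\mathfrak{m}$ when the character is trivial, and $0$ otherwise. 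Triviality of the character on $\mathfrak{b}\mathfrak{m}^{-1}/\mathfrak{b}$ is equivalent to $\mathrm{Tr}_{F/\mathbb{Q}}(\mathfrak{b}\mathfrak{m}^{-1})\subset\mathbb{Z}$, i.e.\ $\mathfrak{b}\mathfrak{m}^{-1}\subset \mathfrak{d}_F^{-1}$, i.e.\ $\mathfrak{m}\mid \mathfrak{b}\mathfrak{d}_F$, which is precisely the case distinction in the statement.

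\smallskip
There is no real obstacle; the only point that requires a little care is checking the annihilator partition in the Dedekind setting. The usual argument over $\mathbb{Z}$ goes through once one knows $\mathcal{O}_F/\mathfrak{m}$ is a principal ideal ring (equivalently, via CRT one reduces to local rings $\mathcal{O}_F/\mathfrak{p}^e$, where it is elementary). Once this is in hand, both parts collapse to counting and to character orthogonality.
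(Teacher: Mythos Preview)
Your proof is correct and follows exactly the same route as the paper: the paper's one-line proof cites precisely the partition $\mathfrak b \mathfrak m^{-1}/\mathfrak b = \coprod_{\mathfrak a \mid \mathfrak m} (\mathfrak b \mathfrak a^{-1}/\mathfrak b)^{\times}$ that you establish, and your counting argument for (1) and character-orthogonality argument for (2) are the intended way to extract the two identities from it. You have simply supplied the details the paper leaves implicit.
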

\begin{proof}
This follows directly from the identity $\mathfrak b \mathfrak m^{-1} /\mathfrak b = \coprod_{\mathfrak a |\mathfrak m} (\mathfrak b \mathfrak a^{-1} /\mathfrak b)^{\times}$.
\end{proof}

\begin{dfn}
Let $\mathbf s=(s_1, \dots, s_k) \in \mathbb C^k$, $\mathbf t=(t_1, \dots, t_l) \in \mathbb C^l$ be tuples of complex variables such that $\re{s_i}, \re{t_j} >2$ for all $i,j$, and let $\mathfrak b_j \subset \mathfrak d_F^{-1}$ $1\leq j\leq l$ be fractional $\mathcal O_F$-ideals. For $\mathscr A\in Cl_F$, we define
\begin{align}
Z_{\mathscr A}(\mathbf s; \mathbf t; (\mathfrak b_j)_j):=
\sum_{\substack{\mathfrak m_1,\dots, \mathfrak m_k \subset \mathcal O_F\\ \mathfrak n_1,\dots, \mathfrak n_l \subset \mathcal O_F\\ \mathfrak m_1\cdots \mathfrak m_k \mathfrak n_1\cdots \mathfrak n_l \in \mathscr A}} 
\prod_{i=1}^k \frac{\varphi(\mathfrak m_i)}{N\mathfrak m_i^{s_i}} 
\prod_{j=1}^l\frac{\tau(\mathfrak n_j, \mathfrak b_j)}{N\mathfrak n_j^{t_j}},
\end{align}
if $k,l\geq 1$. In the case where $k=0$ or $l=0$, we define
\begin{align}
&Z_{\mathscr A}(\emptyset, \emptyset):=
\begin{cases}
1& (\mathscr A = [\mathcal O_F]) \nonumber \\
0& (\mathscr A \neq [\mathcal O_F])
\end{cases}, \quad
Z_{\mathscr A}(\mathbf s; \emptyset):=
\sum_{\substack{\mathfrak m_1,\dots, \mathfrak m_k \subset \mathcal O_F\\ \mathfrak m_1\cdots \mathfrak m_k \in \mathscr A}} 
\prod_{i=1}^k \frac{\varphi(\mathfrak m_i)}{N\mathfrak m_i^{s_i}}, \\
&Z_{\mathscr A}(\emptyset; \mathbf t; (\mathfrak b_j)_j):=
\sum_{\substack{\mathfrak n_1,\dots, \mathfrak n_l \subset \mathcal O_F\\ \mathfrak n_1\cdots \mathfrak n_l \in \mathscr A}} 
\prod_{j=1}^l\frac{\tau(\mathfrak n_j, \mathfrak b_j)}{N\mathfrak n_j^{t_j}}.
\end{align}
Furthermore, for a character $\chi \in \Hom(Cl_F, \mathbb C^{\times})$, we define
\begin{align}
Z(\mathbf s; \mathbf t; (\mathfrak b_j)_j; \chi):= \sum_{\mathscr A \in Cl_F}\chi(\mathscr A)Z_{\mathscr A}(\mathbf s; \mathbf t; (\mathfrak b_j)_j).
\end{align}
\end{dfn}

\begin{lem}\label{z_chi lem}
Let the notations be as above. Then $Z_{\mathscr A}(\mathbf s; \mathbf t; (\mathfrak b_j)_j)$ converges absolutely and compactly for $\re{s_i}, \re{t_j}>2$. Moreover, for $\chi \in \Hom(Cl_F,\mathbb C^{\times})$, we have
\begin{align}
Z(\mathbf s; \mathbf t; (\mathfrak b_j)_j; \chi) = 
\prod_{i=1}^k \frac{L(s_i-1,\chi)}{L(s_i,\chi)} \prod_{j=1}^l \frac{\sigma_{1-t_j}(\mathfrak b_j \mathfrak d_F, \chi)}{L(t_j,\chi)},
\end{align}
where $L(s,\chi)$ is the Hecke $L$-function associated to the character $\chi$.
\end{lem}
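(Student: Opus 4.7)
The plan is to handle the convergence and the factorization in turn, relying on Lemma~\ref{arith func lem} together with Möbius inversion on ideals.

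For the convergence, I would note the crude bounds $\varphi(\mathfrak m)\le N\mathfrak m$ and $|\tau(\mathfrak n,\mathfrak b)|\le N\mathfrak n$ (both follow from the definitions: the first trivially, and the second because the Ramanujan sum is a sum of at most $\varphi(\mathfrak m)\le N\mathfrak m=N\mathfrak n$ roots of unity). Substituting these bounds, the series defining $Z_{\mathscr A}$ is dominated termwise by
\begin{equation*}
\sum_{\mathfrak m_1,\dots,\mathfrak m_k,\mathfrak n_1,\dots,\mathfrak n_l}\prod_i \frac{1}{N\mathfrak m_i^{s_i-1}}\prod_j \frac{1}{N\mathfrak n_j^{t_j-1}}=\prod_i\zeta_F(s_i-1)\prod_j\zeta_F(t_j-1),
\end{equation*}
which converges absolutely and locally uniformly for $\re s_i,\re t_j>2$ by the convergence of the Dedekind zeta function.

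For the factorization formula, I would exploit that summing $\chi(\mathscr A)Z_{\mathscr A}$ over $\mathscr A\in Cl_F$ eliminates the ideal class constraint: since $\chi$ is a character, $\chi([\mathfrak m_1\cdots\mathfrak m_k\mathfrak n_1\cdots\mathfrak n_l])=\prod_i\chi(\mathfrak m_i)\prod_j\chi(\mathfrak n_j)$, so
\begin{equation*}
Z(\mathbf s;\mathbf t;(\mathfrak b_j)_j;\chi)=\prod_{i=1}^k\Bigl(\sum_{\mathfrak m\subset\mathcal O_F}\frac{\varphi(\mathfrak m)\chi(\mathfrak m)}{N\mathfrak m^{s_i}}\Bigr)\prod_{j=1}^l\Bigl(\sum_{\mathfrak n\subset\mathcal O_F}\frac{\tau(\mathfrak n,\mathfrak b_j)\chi(\mathfrak n)}{N\mathfrak n^{t_j}}\Bigr).
\end{equation*}
Each factor then reduces to a ratio of $L$-functions by Möbius inversion. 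Indeed, Möbius-inverting the identity $\sum_{\mathfrak a|\mathfrak m}\varphi(\mathfrak a)=N\mathfrak m$ gives $\varphi(\mathfrak m)=\sum_{\mathfrak a|\mathfrak m}\mu(\mathfrak m/\mathfrak a)N\mathfrak a$, so writing $\mathfrak m=\mathfrak a\mathfrak c$ factors the sum as $L(s-1,\chi)/L(s,\chi)$. Similarly, Möbius-inverting the identity for $\tau$ yields $\tau(\mathfrak n,\mathfrak b)=\sum_{\mathfrak a|\mathfrak n,\,\mathfrak a|\mathfrak b\mathfrak d_F}\mu(\mathfrak n/\mathfrak a)N\mathfrak a$, and substituting $\mathfrak n=\mathfrak a\mathfrak c$ separates the sum into $\sigma_{1-t}(\mathfrak b\mathfrak d_F,\chi)\cdot L(t,\chi)^{-1}$.

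The main obstacle is purely bookkeeping: one must be careful that the condition $\mathfrak b_j\subset\mathfrak d_F^{-1}$ is what allows the identity for $\tau(\mathfrak n,\mathfrak b_j)$ in Lemma~\ref{arith func lem} to apply, and that the Möbius inversion on the ideal monoid of $\mathcal O_F$ is valid (which it is, since $\mathcal O_F$ is a Dedekind domain and the set of non-zero ideals forms a free commutative monoid on the prime ideals). Both the interchange of summation needed to factor out the sum over $\mathfrak c$ and the pointwise manipulations are justified by the absolute convergence established in the first step.
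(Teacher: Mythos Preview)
Your argument is correct and is precisely the standard unpacking of the paper's one-line proof ``This follows from Lemma~\ref{arith func lem}'': the convergence bound via $\varphi(\mathfrak m),|\tau(\mathfrak n,\mathfrak b)|\le N\mathfrak m$ and the factorization via M\"obius inversion of the two identities in that lemma are exactly what is implicitly intended. There is nothing to add.
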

\begin{proof}
This follows from Lemma \ref{arith func lem}.
\end{proof}
On the other hand, we have
\begin{align}
Z_{\mathscr A}(\mathbf s; \mathbf t; (\mathfrak b_j)_j)=\frac{1}{h_F}\sum_{\chi \in \Hom(Cl_F, \mathbb C^{\times})} \chi(\mathscr A^{-1})Z(\mathbf s; \mathbf t; (\mathfrak b_j)_j; \chi).
\end{align}
Therefore we get the following:

\begin{cor}\label{z_a cor} 
For $\mathscr A\in Cl_F$, we have
\begin{align}
Z_{\mathscr A}(\mathbf s; \mathbf t; (\mathfrak b_j)_j)=\frac{1}{h_F}\sum_{\chi \in \Hom(Cl_F, \mathbb C^{\times})} \chi(\mathscr A^{-1})\prod_{i=1}^k \frac{L(s_i-1,\chi)}{L(s_i,\chi)} \prod_{j=1}^l \frac{\sigma_{1-t_j}(\mathfrak b_j \mathfrak d_F, \chi)}{L(t_j,\chi)}.
\end{align}
In particular $Z_{\mathscr A}(\mathbf s; \mathbf t; (\mathfrak b_j)_j)$ can be continued meromorphically to whole $s_i, t_j \in \mathbb C$. \qed
\end{cor}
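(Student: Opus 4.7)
The corollary is essentially a formal consequence of Lemma \ref{z_chi lem} via Fourier inversion on the finite abelian group $Cl_F$, so the plan is rather short. First I would note that the statement displayed just before the corollary,
\[
Z_{\mathscr A}(\mathbf s; \mathbf t; (\mathfrak b_j)_j)=\frac{1}{h_F}\sum_{\chi \in \Hom(Cl_F, \mathbb C^{\times})} \chi(\mathscr A^{-1})Z(\mathbf s; \mathbf t; (\mathfrak b_j)_j; \chi),
\]
is exactly character orthogonality: we recognize the definition
\[
Z(\mathbf s; \mathbf t; (\mathfrak b_j)_j; \chi) = \sum_{\mathscr B \in Cl_F} \chi(\mathscr B)\, Z_{\mathscr B}(\mathbf s; \mathbf t; (\mathfrak b_j)_j)
\]
as a discrete Fourier transform on $Cl_F$, valid in the region of absolute convergence $\re s_i, \re t_j > 2$ guaranteed by the first half of Lemma \ref{z_chi lem}. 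Summing the right-hand side against $\chi(\mathscr A^{-1})$ and applying $\frac{1}{h_F}\sum_\chi \chi(\mathscr A^{-1}\mathscr B) = \delta_{\mathscr A, \mathscr B}$ recovers $Z_{\mathscr A}$, which justifies the inversion formula.

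Substituting the product formula for $Z(\mathbf s; \mathbf t; (\mathfrak b_j)_j; \chi)$ supplied by Lemma \ref{z_chi lem} then gives the displayed identity of the corollary, still in the region $\re s_i, \re t_j > 2$.

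For the last assertion, the meromorphic continuation of each Hecke $L$-function $L(s, \chi)$ to all of $\mathbb C$ is classical: $L(s,\chi)$ is entire if $\chi$ is nontrivial and has a unique simple pole at $s=1$ if $\chi$ is trivial. The divisor sum $\sigma_{1-t_j}(\mathfrak b_j \mathfrak d_F, \chi)$ is a finite sum of terms of the form $\chi(\mathfrak n) N\mathfrak n^{1-t_j}$, hence entire in $t_j$. Consequently each factor $L(s_i-1,\chi)/L(s_i,\chi)$ and $\sigma_{1-t_j}(\mathfrak b_j \mathfrak d_F,\chi)/L(t_j,\chi)$ is meromorphic in its variable, and the finite sum over the character group $\Hom(Cl_F,\mathbb C^\times)$ preserves meromorphicity. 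By analytic continuation, the identity extends from the region of absolute convergence to all $s_i, t_j \in \mathbb C$ where it makes sense, giving $Z_{\mathscr A}$ as a meromorphic function.

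The only mild point requiring attention is that the manipulations in the first paragraph take place inside the domain of absolute convergence secured by Lemma \ref{z_chi lem}, so the identity (\emph{as an identity of complex numbers}) is rigorously derived before invoking meromorphic continuation. Beyond that, there is no real obstacle: the argument is a combination of elementary character orthogonality on the finite group $Cl_F$ with the standard analytic properties of Hecke $L$-functions.
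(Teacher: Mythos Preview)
Your proposal is correct and follows exactly the paper's approach: the paper simply records the Fourier inversion identity immediately before the corollary and then states the corollary with a \qed, so the argument is precisely character orthogonality on $Cl_F$ applied to Lemma \ref{z_chi lem}, with meromorphic continuation read off from the standard analytic properties of Hecke $L$-functions.
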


In this section, we use a slightly different gamma factor from those in Section \ref{integral formula}. 
For a number field $k/\mathbb Q$ of degree $d$, define
\begin{equation}\label{abs gamma}
\Gamma_k(s) :=\prod_{\sigma \in S_k} \Gamma (\frac{n_{\sigma}s}{2}).
\end{equation}
Let $K_s(x)$ ($s \in \mathbb C$, $x \in \mathbb R_{>0}$) be the $K$-Bessel function, that is, 
\begin{equation}\label{K-bessel}
K_s(x)= \frac{1}{2}\int_{0}^{\infty}e^{-\frac{1}{2}x(u+\frac{1}{u})}u^s\frac{du}{u}.
\end{equation}
Then, for $s \in \mathbb C$ and $x=(x_{\sigma})_{\sigma} \in T_k$, we define the $K$-Bessel function $K_k(s,x)$ over $k$ as
\begin{equation}\label{bessel over k}
K_k(s,x):=\prod_{\sigma \in S_k} K_{\frac{n_{\sigma}s}{2}}(x_{\sigma}).
\end{equation}

\if0
\begin{thm}[Fourier expansion]\label{fourier exp thm}
\begin{enumerate}[{\rm (1)}]
\item (Constant term, $\nu=0$) We have
\begin{align}
&\mathscr I_{0}(z,s) = 
 \delta_{[\mathfrak a],[\mathfrak a_n]} \left(\frac{N \mathfrak a}{N \mathfrak a_n} \right)^{ns} \prod_{\sigma} |\det \overline g_{\sigma}|^{n_{\sigma}s} \\
&+ \frac{2^{r_2(F)} \pi^{d/2}}{\sqrt{|d_F|}} 
\frac{\Gamma_F(ns-1)}{\Gamma_F(ns)} \frac{N \mathfrak a}{N \mathfrak a_n} 
\prod_{\sigma} |\det \overline g_{\sigma}|^{n_{\sigma}\frac{1-s}{n-1}}
\sum_{\substack{\mathfrak m \subset \mathcal O_F\\ \mathfrak m \neq 0}} \frac{\varphi (\mathfrak m)}{N \mathfrak m} 
E_{\data{\mathfrak a \mathfrak m^{-1}}{\overline L}}(([\overline g_{\sigma}])_{\sigma},\tfrac{ns-1}{n-1}), \nonumber
\end{align}
where $\delta$ is the Kronecker delta, the last sum is taken over all non-zero integral ideals $\mathfrak m$, $\overline L :=L^{(n-1)}$, $\overline{L}_{\mathfrak a \mathfrak m^{-1}}$ is the subset of $\overline L \subset F^{n-1}$ defined as (\ref{l_b}), $E_{\data{\mathfrak a \mathfrak m^{-1}}{\overline L}}$ is the Eisenstein series on $\mathfrak h_F^{n-1}$, and we identify $([\overline g_{\sigma}])_{\sigma} \in \mathfrak h_F^{n-1}$.
\item (Non-constant terms, $\nu \neq 0$) We have
\begin{align}
\sum_{\nu \in N^{\vee} - \{0\}} \mathscr I_{\nu}(z,s)&= \frac{2^{d} \pi^{\frac{dns}{2}}}{\sqrt{|d_F|}} \left(\frac{N \mathfrak a}{N \mathfrak a_n} \right)^{ns}
\sum_{\substack{\mathfrak n \subset \mathcal O_F\\ \mathfrak n \neq 0}} N(\mathfrak n \mathfrak d_F^{-1})^{ns-1} \sum_{\mathfrak m \in [\mathfrak a \mathfrak a_n^{-1} \mathfrak n \mathfrak d_F^{-1}]} \frac{\tau(\mathfrak m, \mathfrak n \mathfrak d_F^{-1})}{N \mathfrak m ^{ns}} \nonumber \\
&\times \sum_{\nu \in (N^{\vee})_{\mathfrak n^{-1}}} e^{2 \pi i \braket{\nu, b}}
\frac{K_F(ns-1,(2 \pi ||\nu \overline g_{\sigma}||)_{\sigma})}{\Gamma_F(ns)} 
\prod_{\sigma}\frac{|\det \overline g_{\sigma}|^{n_{\sigma}s}}{||\nu \overline g_{\sigma}||^{\frac{nn_{\sigma}s-n_{\sigma}}{2}}},
\end{align}
where $b=(b_{\sigma})_{\sigma} \in F_{\infty}^{n-1}$, which is defined just after (\ref{g_sigma bar}).
\end{enumerate}
\end{thm}
\fi

\subsection{Statement of the Fourier expansion formula}
We have the following formulas for the Fourier coefficients $\mathscr I_{L,\mathfrak [a],\nu}(z,s)$. 
For $1 \leq j \leq n-1$, we set $L^{(j)} :=\mathfrak a_1\oplus \cdots \oplus \mathfrak a_{n-j} \subset F^{n-j}$. 
\begin{thm}\label{fourier coeff thm}
\begin{enumerate}[{\rm (1)}] 
\item (Constant term, $\nu=0$) For $\re{s}>1$, we have
\begin{align}
&\frac{\mathscr I_{L,[\mathfrak a],0}(z,s)}{\zeta_F(\mathfrak a^{-1},ns)}
= \delta_{[\mathfrak a], [\mathfrak a_n]} (N \mathfrak a_n)^{-ns} \prod_{\sigma} |\det g^{(1)}_{\sigma}|^{n_{\sigma}s}  \\
&+ \frac{2^{r_2(F)} \pi^{\frac{d}{2}}}{\sqrt{|d_F|} N \mathfrak a_n} \frac{\Gamma_F(ns-1)}{\Gamma_F(ns)} 
\prod_{\sigma} |\det g^{(1)}_{\sigma}|^{n_{\sigma}\frac{1-s}{n-1}}
\sum_{\substack{\mathfrak m \subset \mathcal O_F\\ \mathfrak m \neq 0}} \frac{\varphi (\mathfrak m)}{N \mathfrak m^{ns}} 
\frac{E_{L^{(1)},[\mathfrak a \mathfrak m^{-1}]}([g^{(1)}],\frac{ns-1}{n-1})}{\zeta_F(\mathfrak a^{-1}\mathfrak m,ns-1)},\nonumber
\end{align}
where $\delta$ is the Kronecker delta, $g^{(1)}_{\sigma}$ is the $\sigma$-component of $g^{(1)}$ and we identify $[g^{(1)}] \in \mathfrak h_F^{n-1}$, and $E_{L^{(1)},[\mathfrak a \mathfrak m^{-1}]}([g^{(1)}],s)$ is the Eisenstein series on $\mathfrak h_F^{n-1}$.
\item (Non-constant terms, $\nu \neq 0$) For $\re{s}>1$, we have
\begin{align}
\sum_{\nu \in N^{\vee} - \{0\}} \frac{\mathscr I_{L,\mathfrak [a],\nu}(z,s)}{\zeta_F(\mathfrak a^{-1},ns)} = \frac{2^{d} \pi^{\frac{dns}{2}}}{\sqrt{|d_F|}(N \mathfrak a_n)^{ns}} 
\sum_{\substack{\mathfrak n \subset \mathcal O_F\\ \mathfrak n \neq 0}} N(\mathfrak n \mathfrak d_F^{-1})^{ns-1} \sum_{\mathfrak m \in [\mathfrak a \mathfrak a_n^{-1} \mathfrak n \mathfrak d_F^{-1}]} \frac{\tau(\mathfrak m, \mathfrak n \mathfrak d_F^{-1})}{N \mathfrak m ^{ns}}& \nonumber \\
\times \sum_{\nu \in (N^{\vee})_{\mathfrak n^{-1}}} e^{2 \pi i \braket{\nu, \mathbf x_{n}}}
\frac{K_F(ns-1,(2 \pi ||\nu g^{(1)}_{\sigma}||)_{\sigma})}{\Gamma_F(ns)} 
\prod_{\sigma}\frac{|\det g^{(1)}_{\sigma}|^{n_{\sigma}s}}{||\nu g^{(1)}_{\sigma}||^{\frac{n_{\sigma}}{2}(ns-1)}}.&
\end{align}
\end{enumerate}
\end{thm}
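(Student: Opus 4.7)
The plan is to compute the Fourier integral by direct unfolding, following the standard argument of Goldfeld adapted to carefully track ideal classes of $F$. First I would use Proposition \ref{prop eis ser}(3) to reduce to the simpler Eisenstein series
\begin{equation*}
E_{\data{\mathfrak a}{L}}([g],s)=\sum_{x\in \mathcal O_F^{\times}\bs L_{\mathfrak a}}\prod_{\sigma}\frac{|\det g_\sigma|^{n_\sigma s}}{\|xg_\sigma\|^{nn_\sigma s}},
\end{equation*}
since the factor $\zeta_F(\mathfrak a^{-1},ns)$ in the denominator of the target formulas is exactly what (\ref{eis ser eqn3}) contributes. Writing $x=(\mathbf x',x_n)$ with $\mathbf x'=(x_1,\dots,x_{n-1})\in L^{(1)}=\mathfrak a_1\oplus\cdots\oplus\mathfrak a_{n-1}$ and $x_n\in\mathfrak a_n$, one computes in the Iwasawa normal form $g=XY$ that $\|xg_\sigma\|^{2}=\|(\mathbf x' g^{(1)})_\sigma\|^{2}+|(x_n+\mathbf x'\cdot(\mathbf x_n+\xi))_\sigma|^{2}$, so the $N$-action $x\mapsto(\mathbf x',x_n+\mathbf x'\cdot \xi)$ (with $\mathbf x'\cdot \xi:=\sum_i x_i\xi_i$) touches only the last coordinate. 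Now I split the sum into pieces $\mathbf x'=0$ and $\mathbf x'\neq0$.

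The piece $\mathbf x'=0$ has $\xi$-independent integrand, hence contributes only to $\nu=0$; the requirement $(0,\dots,0,x_n)\in L_{\mathfrak a}$ forces $x_n\mathfrak a=\mathfrak a_n$, which admits a solution (unique modulo $\mathcal O_F^\times$) exactly when $[\mathfrak a]=[\mathfrak a_n]$, yielding the Kronecker-delta summand in (1) after a direct norm computation.

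The piece $\mathbf x'\neq 0$ is handled by unfolding. For fixed $\mathbf x'$, the $N$-orbit of $x_n\in\mathfrak a_n$ is $x_n+\mathfrak m\mathfrak a_n$, where $\mathfrak m:=\sum_i x_i\mathfrak a_i^{-1}$ is the integral content ideal of $\mathbf x'$; by intersection-sum duality in the Dedekind ring $\mathcal O_F$, the denominator ideal of $\mathbf x'$ in $L^{(1)}$ is $\mathfrak b(\mathbf x')=\mathfrak m^{-1}$. Since $\nu\in N^\vee$, the character $e^{-2\pi i\braket{\nu,\xi}}$ is $N$-invariant, so
\begin{equation*}
\sum_{x_n\in\mathfrak a_n}\int_{N\bs N(\mathbb R)}(\cdots)\,d\xi \;=\; \sum_{x_n\in \mathfrak a_n/\mathfrak m\mathfrak a_n}\int_{N(\mathbb R)}(\cdots)\,d\xi.
\end{equation*}
Decomposing $N(\mathbb R)\simeq \ker(\mathbf x'\cdot)\oplus F_\infty$ along the linear form $\xi\mapsto \mathbf x'\cdot \xi$, the integration over the $(n-2)d$-dimensional kernel forces $\nu$ to be parallel to $\mathbf x'$, say $\nu=\mu\mathbf x'$ with $\mu\in F_\infty$, and the remaining Euclidean integral
\begin{equation*}
\int_{F_\infty}\frac{e^{-2\pi iTr_{F/\mathbb Q}(\mu\eta)}}{(\|\mathbf y\|^{2}+|\eta|^{2})^{ns/2}}\,d\eta\qquad(\mathbf y:=\mathbf x' g^{(1)})
\end{equation*}
produces the $K$-Bessel factor $K_F(ns-1,(2\pi\|\nu g^{(1)}_\sigma\|)_\sigma)/\Gamma_F(ns)$ for $\mu\neq 0$ via (\ref{K-bessel}) and (\ref{bessel over k}), and the quotient $\Gamma_F(ns-1)/\Gamma_F(ns)$ multiplied by an extra $\|\mathbf y\|^{-(ns-1)}$ for $\mu=0$.

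The final step is the ideal-class bookkeeping. The condition $x\in L_{\mathfrak a}$ translates, via $\mathfrak b(\mathbf x')=\mathfrak m^{-1}$, into the Bezout-type relation $\mathfrak a^{-1}=\mathfrak m+(x_n)\mathfrak a_n^{-1}$ in $\mathcal O_F$; summing over $x_n\bmod\mathfrak m\mathfrak a_n$ satisfying this relation---weighted trivially for $\nu=0$ and by $e^{2\pi iTr_{F/\mathbb Q}(\mu x_n)}$ for $\nu\neq 0$---produces Euler's $\varphi(\mathfrak m)$ in the first case and the Ramanujan sum $\tau(\mathfrak m,\mathfrak n\mathfrak d_F^{-1})$ in the second, with $\mathfrak n$ dictated by $\mu$ through $\nu\in (N^\vee)_{\mathfrak n^{-1}}$. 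The residual sum over $\mathbf x'$ of content $\mathfrak m$ reassembles into $E_{L^{(1)},[\mathfrak a\mathfrak m^{-1}]}$ up to the $\zeta_F(\mathfrak a^{-1}\mathfrak m,ns-1)$ normalization from (\ref{eis ser eqn3}), while the phase $e^{2\pi i\braket{\nu,\mathbf x_n}}$ appears naturally from the shift $\xi\to\xi-\mathbf x_n$ absorbing the term $\mathbf x'\cdot\mathbf x_n$. The hardest part will be precisely this bookkeeping: correctly matching the target classes $[\mathfrak a\mathfrak m^{-1}]$ and $[\mathfrak a\mathfrak a_n^{-1}\mathfrak n\mathfrak d_F^{-1}]$, and tracking the normalization constants $N\mathfrak a_n$, $\sqrt{|d_F|}$, and $2^{r_2(F)}\pi^{d/2}$ through the measure identifications between $N\bs N(\mathbb R)$, $\ker(\mathbf x'\cdot)$, and $F_\infty$.
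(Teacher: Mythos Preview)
Your plan is essentially the paper's own proof: reduce to $E_{\data{\mathfrak a}{L}}$ via Proposition~\ref{prop eis ser}(3), separate the ``last row zero'' piece to get the Kronecker delta term, unfold the rest against $N$, and read off the gamma/Bessel integrals and the Euler--Ramanujan arithmetic from the residual $x_n$-sum. The only organizational difference is that the paper stratifies $L_{\mathfrak a}$ by the index $i_0$ of the \emph{first} nonzero coordinate of $\mathbf x'$ and unfolds using the single coordinate subgroup $N_{i_0}$ (see (\ref{eq420})--(\ref{eq423}) and Lemma~\ref{lem1 in pf}), whereas you propose to unfold coordinate-freely via the kernel/image of $\xi\mapsto\mathbf x'\cdot\xi$. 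Both reach the same place; the paper's choice of $i_0$ simply fixes a concrete transversal to $\ker(\mathbf x'\cdot)$ and makes the volume bookkeeping explicit.

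Two small points to correct when you fill in the details. First, your displayed unfolding identity is not quite right: the full group $N$ does not act freely on the $x_n$-line, so the right-hand side should be an integral over $K_N\backslash N(\mathbb R)$ with $K_N:=N\cap\ker(\mathbf x'\cdot)$, not over $N(\mathbb R)$; the orthogonality then comes from the compact torus $K_N\backslash\ker(\mathbf x'\cdot)$, not from an integral over the noncompact kernel. Second, your content ideal $\mathfrak m=\sum_i x_i\mathfrak a_i^{-1}$ is the paper's $\mathfrak b_{x'}^{-1}$, whereas the $\mathfrak m$ appearing in the statement (in $\varphi(\mathfrak m)$ and in the class $[\mathfrak a\mathfrak m^{-1}]$) is $\mathfrak a\mathfrak b_{x'}^{-1}$; the count of admissible $x_n$ modulo the orbit is $\varphi(\mathfrak a\mathfrak b_{x'}^{-1})$, not $\varphi(\mathfrak b_{x'}^{-1})$ (cf.\ Lemma~\ref{lem1 in pf}(ii)).
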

We give a proof of this theorem in Section \ref{proof sect}. 

\begin{rmk}
We can deduce the corresponding results of Epstein~\cite{epstein03}, Liu-Masri~\cite{liumasri15} and Yamamoto~\cite{yamamoto08} from this theorem. We go further to get rid of the Eisenstein series in the constant term. Note that in order to deduce the result in \cite{epstein03} and \cite{liumasri15}, we use the functional equation of the Eisenstein series, and specialize this formula to $s=0$ (see also Theorem \ref{phi psi conti} below).
\end{rmk}

\paragraph{Fourier expansion formula}
By using Theorem \ref{fourier coeff thm} recursively, we obtain a kind of Fourier expansion formula for $E_{L, [\mathfrak a]}(z,s)$.
In order to simplify the presentation, we first define some additional notation. 
For $0\leq j\leq n-1$, we set
\begin{align}
c_j(z,s):=& 
\left(\frac{2^{r_2(F)}\pi^{\frac{d}{2}}}{\sqrt{|d_F|}}\right)^j \frac{\Gamma_F(ns-j)}{\Gamma_F(ns)} 
\frac{(N\mathfrak a_i)^{j-ns}}{N\mathfrak a_n\cdots N\mathfrak a_{n-j+1}} \nonumber \\
&\quad \quad \quad \quad \times \prod_{k=1}^{n-j-1}|N_{F/\mathbb Q}(y_k)|^{ks}\prod_{k=n-j}^{n-1}|N_{F/\mathbb Q}(y_k)|^{(n-k)(1-s)}, \\
d_j(z, s):=& \frac{2^d \pi^{\frac{ds}{2}(n-j)}}{\sqrt{|d_F|} \Gamma_F(ns-j)}c_j(z, s), 
\end{align}
where we define $c_0(z,s):= \prod_{k=1}^{n-1}|N_{F/\mathbb Q}(y_k)|^{ks}$ for $j=0$. 
For $1 \leq j \leq n-1$, set $\Lambda^{(j)}:= \mathfrak a_1 \mathfrak a_{n-j+1}^{-1}\mathfrak d_{F}^{-1}\oplus \cdots \oplus \mathfrak a_{n-j}\mathfrak a_{n-j+1}^{-1}\mathfrak d_{F}^{-1}$. Note that $\Lambda^{(1)}=N^{\vee}$.

\begin{dfn}
\begin{enumerate}[{\rm (1)}]
\item For $0\leq j \leq n-1$, $\re{s}>1$, we define
\begin{align}
\varPhi_j(z,s) := c_j(z,s) Z_{[\mathfrak a \mathfrak a_{n-j}^{-1}]} (\,\overbrace{ns, ns-1, \dots , ns-j+1}^{j-tuple}; \emptyset),
\end{align}
where we assume $(ns, ns-1 \dots, ns-j+1)$ is the empty tuple $\emptyset$ if $j=0$.
\item For $0\leq j \leq n-2$, $\re{s}>1$, we define
\begin{multline}
\varPsi_j (z,s) := 
d_j(z,s) \sum_{\substack{\mathfrak n \subset \mathcal O_F\\ \mathfrak n \neq 0}} 
Z_{[\mathfrak a \mathfrak a_{n-j}^{-1}\mathfrak n \mathfrak d_F^{-1}]} (\,\overbrace{ns, ns-1, \dots , ns-j+1}^{j-tuple}; ns-j; \mathfrak n \mathfrak d_F^{-1}) \\
\times N(\mathfrak n \mathfrak d_F^{-1})^{ns-j-1} 
\sum_{\nu \in (\Lambda^{(j+1)})_{\mathfrak n^{-1}}} e^{2 \pi i \braket{\nu, \mathbf x_{n-j}}}
\frac{K_F(ns-j-1,(2 \pi ||\nu g^{(j+1)}_{\sigma}||)_{\sigma})}{\prod_{\sigma}||\nu g^{(j+1)}_{\sigma}||^{\frac{n_{\sigma}}{2}(ns-j-1)}},
\end{multline}
where we assume $(ns, ns-1 \dots, ns-j+1) = \emptyset$ if $j=0$, as above. 
\end{enumerate}
\end{dfn}

\if0
\begin{ex}\label{phi psi ex}
In the case where $F=\mathbb Q$, $n=2$, $L=\mathbb Z \oplus \mathbb Z$, and $\mathfrak a=\mathbb Z$, we have
\begin{align}
&\varPhi_0(z,s)=y_1^s, \quad
\varPhi_1(z,s)=\sqrt{\pi}y_1^{1-s} \frac{\Gamma(s-\frac{1}{2})}{\Gamma(s)}\frac{\zeta_{\mathbb Q}(2s-1)}{\zeta_{\mathbb Q}(2s)},\\
&\varPsi_0(z,s)=\frac{2\pi \sqrt{y_1}}{\Gamma(s)\zeta_{\mathbb Q}(2s)}
\sum_{n\in \mathbb Z, \neq 0} |n|^{s-\frac{1}{2}}\sigma_{1-2s}(n)K_{s-\frac{1}{2}}(2\pi|n|y_1)e^{2\pi i n x_{12}},
\end{align}
where $\sigma_s(n)$ is the usual divisor sum.
\end{ex}
\fi

\begin{thm}\label{phi psi conti}
The functions $\varPhi_j(z,s)$ and $\varPsi_j(z,s)$ can be continued meromorphically to whole $s \in \mathbb C$. 
Furthermore, $\varPhi_j(z,s)$ (resp. $\varPsi_j(z,s)$) is holomorphic outside the poles of $\frac{L(ns-j,\chi)}{L(ns,\chi)}$ (resp. $L(ns, \chi)^{-1}$) for $\chi \in \Hom(Cl_F,\mathbb C^{\times})$ and $0 \leq j \leq n-1$. 
\end{thm}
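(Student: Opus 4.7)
The plan is to apply Corollary \ref{z_a cor} to rewrite the $Z_{\mathscr A}$ factors in the definitions of $\varPhi_j$ and $\varPsi_j$ as finite sums over Hecke characters $\chi \in \Hom(Cl_F,\mathbb C^{\times})$ of products of $L$-function ratios, and then exploit a telescoping identity.

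First I would handle $\varPhi_j$. Corollary \ref{z_a cor} gives
\[
Z_{[\mathfrak a\mathfrak a_{n-j}^{-1}]}(ns,ns-1,\dots,ns-j+1;\emptyset)
= \frac{1}{h_F}\sum_{\chi}\chi([\mathfrak a\mathfrak a_{n-j}^{-1}]^{-1}) \prod_{i=1}^{j}\frac{L(ns-i,\chi)}{L(ns-i+1,\chi)},
\]
and the product on the right telescopes to $L(ns-j,\chi)/L(ns,\chi)$. The prefactor $c_j(z,s)$ is a product of gamma ratios and elementary exponentials in $s$, hence meromorphic on all of $\mathbb C$. Combining these two observations yields the meromorphic continuation and the asserted pole location for $\varPhi_j$.

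Next I would turn to $\varPsi_j$. Applying Corollary \ref{z_a cor} to the inner factor $Z_{\mathscr A}(\dots;\,ns-j;\,\mathfrak n\mathfrak d_F^{-1})$ and telescoping the $\mathbf s$-part as above produces $L(ns-j,\chi)/L(ns,\chi)$, which when multiplied by the $\mathbf t$-contribution $\sigma_{1-(ns-j)}(\mathfrak n,\chi)/L(ns-j,\chi)$ leaves only $1/L(ns,\chi)$ in the denominator. Moreover, the prefactor $d_j(z,s)$ contains $1/\Gamma_F(ns)$, which is entire, so no new poles appear from that quarter.

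The hard part will be to show that the remaining double sum $\sum_{\mathfrak n}\sum_{\nu}$ in the definition of $\varPsi_j$ converges absolutely and locally uniformly on $\mathbb C$ (away from zeros of $L(ns,\chi)$), so that the formal meromorphic continuation is genuine. My plan here is to use the partition $\Lambda^{(j+1)}\setminus\{0\}=\coprod_{\mathfrak n}(\Lambda^{(j+1)})_{\mathfrak n^{-1}}$ coming from Lemma \ref{a-compo lem}(1) to reorganize the double sum as a single sum over $\nu\in\Lambda^{(j+1)}\setminus\{0\}$, whose coefficients grow at most polynomially in $N\mathfrak n_\nu$ (where $\mathfrak n_\nu$ is the ideal attached to $\nu$), and then to dominate these coefficients by the exponential decay of $K_F(ns-j-1,(2\pi\|\nu g^{(j+1)}_\sigma\|)_\sigma)$ as $\|\nu\|\to\infty$, uniformly on compact subsets of $\mathbb C$. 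Majorization by an Epstein-type series, in the spirit of Proposition \ref{convergence e_l}, should then deliver the required uniform convergence and complete the argument.
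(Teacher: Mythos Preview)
Your plan matches the paper's proof. For $\varPhi_j$ the paper simply says ``follows directly from Corollary \ref{z_a cor}'' (your telescoping), and for $\varPsi_j$ it likewise rewrites via Corollary \ref{z_a cor} to isolate $1/L(ns,\chi)$ and then bounds the remaining $\mathfrak n,\nu$-sum.

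One point deserves care: your phrase ``exponential decay of $K_F$ as $\|\nu\|\to\infty$'' is not literally true when $F$ has more than one archimedean place. Since $K_F$ is a product $\prod_{\sigma}K_{n_\sigma s/2}(2\pi\|\nu g^{(j+1)}_\sigma\|)$ and each factor \emph{blows up} as its argument tends to $0$, the product need not decay along lattice directions where some $\|\nu g^{(j+1)}_\sigma\|$ stays bounded while others grow. The paper resolves this by bounding $|K_s(x)x^{-s}|\leq Ce^{-x/2}x^{-10R}$ uniformly on vertical strips (Lemma \ref{asymp lem}(2)), then splitting the $\nu$-sum into $\mathcal O_F^{\times}$-orbits and applying the unit-sum estimate $\sum_{u\in U_F}\prod_\sigma e^{-x_\sigma u_\sigma}\leq C\,N_{F/\mathbb Q}(x)^{-1/4}$ (Lemma \ref{asymp lem}(3)) to convert the exponentials into a uniform power of $\prod_\sigma\|\nu g^{(j+1)}_\sigma\|^{n_\sigma}$. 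The resulting orbit sum is then dominated by an Eisenstein-type series exactly as you anticipate (via Proposition \ref{prop eis ser}(1) rather than Proposition \ref{convergence e_l}). So your outline is correct, but the unit-averaging step is the crux and should be made explicit rather than folded into ``exponential decay''.
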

The proof of this Theorem is given in Section \ref{phi psi proof}

\begin{thm}[Fourier expansion formula]\label{fourier exp thm}
For $\re{s}>1$, we have
\begin{align}
\frac{E_{L,[\mathfrak a]}(z,s)}{\zeta_F(\mathfrak a^{-1},ns)}= \sum_{j=0}^{n-2}(\varPhi_j(z,s)+\varPsi_j(z,s))+\varPhi_{n-1}(z,s).
\end{align}
In particular, $E_{L,[\mathfrak a]}(z,s)$ can be continued meromorphically to whole $s \in \mathbb C$
\end{thm}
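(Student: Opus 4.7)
The plan is to prove the identity by iterated application of the single-step Fourier coefficient formula (Theorem \ref{fourier coeff thm}), coupled with an induction on the rank. The starting point is the standard Fourier inversion
\begin{equation*}
\frac{E_{L,[\mathfrak a]}(z,s)}{\zeta_F(\mathfrak a^{-1},ns)} = \frac{\mathscr I_{L,[\mathfrak a],0}(z,s)}{\zeta_F(\mathfrak a^{-1},ns)} + \sum_{\nu\in N^\vee-\{0\}}\frac{\mathscr I_{L,[\mathfrak a],\nu}(z,s)}{\zeta_F(\mathfrak a^{-1},ns)}.
\end{equation*}
Invoking Theorem \ref{fourier coeff thm}(2) identifies the non-constant contribution as $\varPsi_0(z,s)$, while Theorem \ref{fourier coeff thm}(1) splits the constant term into the $\delta_{[\mathfrak a],[\mathfrak a_n]}$-piece, which (after recognizing $\prod_\sigma|\det g^{(1)}_\sigma|^{n_\sigma s}$ with $(N\mathfrak a_n)^{-ns}$ as $\varPhi_0(z,s)$) leaves a "residual" term involving $E_{L^{(1)},[\mathfrak a\mathfrak m^{-1}]}([g^{(1)}],\tfrac{ns-1}{n-1})/\zeta_F(\mathfrak a^{-1}\mathfrak m,ns-1)$.

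The key observation is that this residual term has exactly the same shape as the original left-hand side, but on the smaller space $\mathfrak h_F^{n-1}$ with shifted spectral parameter $s_1=(ns-1)/(n-1)$ (note $(n-1)s_1=ns-1>1$ whenever $\re s>1$, so convergence is preserved). This sets up an induction: assume after $j$ iterations one has
\begin{equation*}
\frac{E_{L,[\mathfrak a]}(z,s)}{\zeta_F(\mathfrak a^{-1},ns)} = \sum_{k=0}^{j-1}\bigl(\varPhi_k(z,s)+\varPsi_k(z,s)\bigr) + R_j(z,s),
\end{equation*}
where $R_j$ is an explicit multiple of $\sum_{\mathfrak m_1,\dots,\mathfrak m_j}\prod_i\tfrac{\varphi(\mathfrak m_i)}{N\mathfrak m_i^{ns-i+1}}$ times Eisenstein series on $\mathfrak h_F^{n-j}$ in the variable $s_j=(ns-j)/(n-j)$. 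Applying Theorem \ref{fourier coeff thm} once more (with $n-j$ in place of $n$ and $s_j$ in place of $s$) extracts $\varPhi_j$ from the new $\delta$-term, $\varPsi_j$ from the new non-constant modes, and produces $R_{j+1}$. At $j=n-1$ the ambient rank is $1$, the rational Eisenstein series reduces to a single monomial, and no further recursion is possible; one reads off that $R_{n-1}=\varPhi_{n-1}(z,s)$, closing the induction.

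Two pieces of bookkeeping carry the combinatorial weight. First, one must check that the accumulated prefactors from the $j$-th iteration (a product of $j$ gamma-quotients, norm factors $(N\mathfrak a_{n-i+1})^{-(ns-i+1)}$, discriminant factors $2^{r_2(F)}\pi^{d/2}|d_F|^{-1/2}$, and the $y$-variable powers that shift from $ks$ to $(n-k)(1-s_j)$) rearrange exactly into the definitions of $c_j$ and $d_j$. Second, one verifies that the nested sums $\sum_{\mathfrak m_1,\dots,\mathfrak m_j\subset\mathcal O_F}\prod\tfrac{\varphi(\mathfrak m_i)}{N\mathfrak m_i^{ns-i+1}}$ (optionally with one final $\tau(\mathfrak n,\mathfrak n\mathfrak d_F^{-1})/N\mathfrak n^{ns-j}$ factor for the $\varPsi_j$ case) collapse, after grouping by the ideal class $[\mathfrak a\mathfrak a_{n-j}^{-1}\cdots]$, into the $Z_{\mathscr A}$ functions appearing in the definitions of $\varPhi_j$ and $\varPsi_j$.

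The final assertion of meromorphic continuation then follows at once: Theorem \ref{phi psi conti} gives the meromorphic continuation of each $\varPhi_j$ and $\varPsi_j$ individually, so the finite sum inherits it, and since $\zeta_F(\mathfrak a^{-1},ns)$ is itself a meromorphic function on all of $\mathbb C$, multiplying through produces the continuation of $E_{L,[\mathfrak a]}(z,s)$. The main obstacle I anticipate is purely bureaucratic rather than conceptual: tracking the cumulative prefactors (especially the interleaving of norms $N\mathfrak a_n,\dots,N\mathfrak a_{n-j+1}$ and the shifting $y$-exponents) through $n-1$ recursion steps without a sign or gamma-index error. A clean way to keep the accounting honest is to fix, once and for all, the substitution $(n-j)s_j=ns-j$ at the start, and to verify at each inductive step that the two sides of the claimed identity agree modulo the already-confirmed lower-rank identity.
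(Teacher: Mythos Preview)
Your proposal is correct and follows precisely the approach indicated in the paper: the text preceding the definitions of $\varPhi_j$ and $\varPsi_j$ explicitly states that the formula is obtained ``by using Theorem \ref{fourier coeff thm} recursively,'' and the paper's one-line proof simply cites the Fourier inversion (\ref{fourier exp eqn}), Theorem \ref{fourier coeff thm}, and Theorem \ref{phi psi conti}. Your write-up supplies exactly the recursive bookkeeping that the paper leaves implicit; one small imprecision is that the convergence check at each step requires $\re\bigl((n-j)s_j\bigr)=\re(ns-j)>n-j$ (equivalently $\re s_j>1$), not merely $>1$, but this is indeed guaranteed by $\re s>1$.
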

\begin{proof}
This follows from (\ref{fourier exp eqn}), Theorem \ref{fourier coeff thm}, and Theorem \ref{phi psi conti}.
\end{proof}


\begin{ex}\label{phi psi ex}
In the case where $F=\mathbb Q$, $n=2$, $L=\mathbb Z \oplus \mathbb Z$, and $\mathfrak a=\mathbb Z$, we have
\begin{align}
&\varPhi_0(z,s)=y_1^s, \quad
\varPhi_1(z,s)=\sqrt{\pi}y_1^{1-s} \frac{\Gamma(s-\frac{1}{2})}{\Gamma(s)}\frac{\zeta_{\mathbb Q}(2s-1)}{\zeta_{\mathbb Q}(2s)},\\
&\varPsi_0(z,s)=\frac{2\pi \sqrt{y_1}}{\Gamma(s)\zeta_{\mathbb Q}(2s)}
\sum_{n\in \mathbb Z, \neq 0} |n|^{s-\frac{1}{2}}\sigma_{1-2s}(n)K_{s-\frac{1}{2}}(2\pi|n|y_1)e^{2\pi i n x_{12}},
\end{align}
where $\sigma_s(n)$ is the usual divisor sum. In this case, Theorem \ref{fourier exp thm} is nothing but the classical Fourier expansion of the real analytic Eisenstein series \cite[Theorem 3.1.8]{goldfeld06}.
\end{ex}


\subsection{Proof of Theorem \ref{fourier coeff thm}}\label{proof sect}
By Proposition \ref{prop eis ser} (3), it suffices to compute the Fourier coefficients $ \mathscr I_{\data{\mathfrak a}{L},\nu}(z,s)$. 
Put $\overline L := L^{(1)}$, $\overline g =(\overline g_{\sigma})_{\sigma}:= g^{(1)}$ for simplicity. 

\if0
\begin{multline}
({\rm I}) ~ \mathscr I_{0}(z,s) = 
\delta_{[\mathfrak a],[\mathfrak a_n]} \left(\frac{N \mathfrak a}{N \mathfrak a_n} \right)^{ns} \prod_{\sigma} |\det \overline g_{\sigma}|^{n_{\sigma}s} \\
+ \frac{2^{r_2(F)} \pi^{\frac{d}{2}}}{\sqrt{|d_F|}} 
\frac{\Gamma_F(ns-1)}{\Gamma_F(ns)} \frac{N \mathfrak a}{N \mathfrak a_n} 
\sum_{\substack{\mathfrak m \subset \mathcal O_F\\ \mathfrak m \neq 0}} \frac{\varphi (\mathfrak m)}{N \mathfrak m} 
\sum_{x' \in \mathcal O_F^{\times} \bs \overline{L}_{\mathfrak a \mathfrak m^{-1}}} \prod_{\sigma} \frac{|\det \overline{g}_{\sigma}|^{n_{\sigma}s}}{||x' \overline g_{\sigma}||^{nn_{\sigma}s-n_{\sigma}}}, 
\end{multline}
where $\delta_{[\mathfrak a],[\mathfrak a_n]}$ is the Kronecker delta, and
\begin{multline}
({\rm II})  \sum_{\nu \in N^{\vee} - \{0\}} \mathscr I_{\nu}(z,s)= \frac{2^{d} \pi^{\frac{dns}{2}}}{\sqrt{|d_F|}} \left(\frac{N \mathfrak a}{N \mathfrak a_n} \right)^{ns}
\sum_{\substack{\mathfrak n \subset \mathcal O_F\\ \mathfrak n \neq 0}} N(\mathfrak n \mathfrak d_F^{-1})^{ns-1} \sum_{\mathfrak m \in [\mathfrak a \mathfrak a_n^{-1} \mathfrak n \mathfrak d_F^{-1}]} \frac{\tau(\mathfrak m, \mathfrak n \mathfrak d_F^{-1})}{N \mathfrak m ^{ns}} \\ 
\times \sum_{\nu \in (N^{\vee})_{\mathfrak n^{-1}}} e^{2 \pi i \braket{\nu, \mathbf x_{n}}}
\frac{K_F(ns-1,(2 \pi ||\nu \overline g_{\sigma}||)_{\sigma})}{\Gamma_F(ns)} 
\prod_{\sigma}\frac{|\det \overline g_{\sigma}|^{n_{\sigma}s}}{||\nu \overline g_{\sigma}||^{\frac{nn_{\sigma}s-n_{\sigma}}{2}}},
\end{multline}
\fi

 
We fix a fundamental domain of $N \bs N(\mathbb R)$. Set
\begin{align}
L_{\mathfrak a, i}&:=\{x =(x_j)_j \in L_{\mathfrak a}\subset F^n \mid x_j=0 ~(\forall j\leq i-1), x_i \neq 0\}, &(i \leq n),\\
\overline L_{i}&:=\{x =(x_j)_j \in \overline L \subset F^{n-1} \mid x_j=0 ~(\forall j\leq i-1), x_i \neq 0\}, &(i \leq n-1), \\
N_i(\mathbb R) &:=\left\{\xi=(\xi_1, \dots, \xi_{n-1}) \in N(\mathbb R) ~\middle| ~ \xi_j =0 ~(j\neq i) \right\}, &(i \leq n-1), \\
N^i(\mathbb R) &:=\left\{\xi=(\xi_1, \dots, \xi_{n-1}) \in N(\mathbb R) ~\middle| ~ \xi_i =0  \right\}, &(i \leq n-1),\\
N_i &:= N \cap N_i(\mathbb R), \quad N^i :=N \cap N^i(\mathbb R),  &(i \leq n-1).
\end{align}
Here we identify matrices $\xi \in N(\mathbb R)$ with vectors $(\xi_1, \dots, \xi_{n-1}) \in F_{\infty}^{n-1}$ via the identification (\ref{isom N}).
We have $L_{\mathfrak a} = \coprod_{i=1}^nL_{\mathfrak a,i}$, $N=N_iN^i$, and if $i \leq n-1$, $N_i$ acts freely on $L_{\mathfrak a,i}$ by the matrix action from the right. 

\vspace{2mm}
\noindent \underline{\bf Step 1.} (Decomposition of the integral)
For $x \in F_{\infty}^n$, put $f(x):= \prod_{\sigma} \frac{|\det g_{\sigma}|^{n_{\sigma}s}}{||x g_{\sigma}||^{nn_{\sigma}s}}$. Then we can decompose the integral $\mathscr I_{\data{\mathfrak a}{L},\nu}(z,s)$ as 
\begin{align}
\mathscr I_{\data{\mathfrak a}{L},\nu}(z,s)=\frac{1}{vol(N \bs N(\mathbb R))} \sum_{i_0=1}^n \sum_{x \in \mathcal O_F^{\times} \bs L_{\mathfrak a,i_0}} \int_{N \bs N(\mathbb R)}f(x\xi)e^{-2\pi i \braket{\nu,\xi}}d\xi.
\end{align}
Set $\mathscr I_{\nu,i_0}:= \sum_{x \in \mathcal O_F^{\times} \bs L_{\mathfrak a,i_0}} \int_{N \bs N(\mathbb R)}f(x\xi)e^{-2\pi i \braket{\nu,\xi}}d\xi$. 
In the case where $i_0=n$, we easily see that $\mathscr I_{\nu,n}=0$ unless $\nu=0$ and $[\mathfrak a]=[\mathfrak a_n]$, in which case we have
\begin{align}
\mathscr I_{\nu,n}=
vol(N \bs N(\mathbb R)) \left(\frac{N \mathfrak a}{N \mathfrak a_n} \right)^{ns} \prod_{\sigma} |\det \overline g_{\sigma}|^{n_{\sigma}s}.
\end{align}
In the following, we assume $i_0 \leq n-1$. Then we calculate as
\begin{align}
\mathscr I_{\nu,i_0}
&= \sum_{x \in \mathcal O_F^{\times} \bs L_{\mathfrak a, i_0}/N_{i_0}} \sum_{\xi' \in N_{i_0}} \int_{N\bs N(\mathbb R)} f(x\xi' \xi) e^{-2\pi i \braket{\nu,\xi' \xi}} d\xi. \label{eq420} \\
&= \sum_{x \in \mathcal O_F^{\times} \bs L_{\mathfrak a, i_0}/N_{i_0}} \int_{N^{i_0}\bs N^{i_0}(\mathbb R)}\left( \int_{N_{i_0}(\mathbb R)} f(x\xi)e^{-2\pi i \braket{\nu,\xi}} d\xi_{i_0}\right) d\xi^{i_0},
\end{align}
where $\xi' \xi$ is the multiplication as matrices, and we use $e^{-2\pi i \braket{\nu, \xi' \xi}}=e^{-2\pi i \braket{\nu,\xi}}$ since $\braket{\nu,\xi'} \in \mathbb Z$. Furthermore, $d\xi_{i_0}$ and $d\xi^{i_0}$ are the Haar measures on $N_{i_0}(\mathbb R)$ and $N^{i_0}(\mathbb R)$ normalized in the same way as $d\xi$.

\vspace{2mm}
\noindent \underline{\bf Step 2.} (Calculation of the integral)

Now, $x\xi = (x_1, \dots, x_{n-1}, x_1\xi_1+\cdots + x_{n-1}\xi_{n-1}+x_n)$ for $\xi = (\xi_1 ,\dots, \xi_{n-1})$.
Since $x_{i_0} \neq 0$, we can replace $x_1\xi_1+\cdots + x_{n-1}\xi_{n-1}+x_n$ with $x_{i_0}\xi_{i_0}$ by the change of variables. Then we obtain
\begin{align}
 \int_{N^{i_0}\bs N^{i_0}(\mathbb R)}&\left( \int_{N_{i_0}(\mathbb R)} f(x\xi)e^{-2\pi i \braket{\nu,\xi}} d\xi_{i_0}\right) d\xi^{i_0} \\
=e^{2\pi i Tr_{F/\mathbb Q}(\frac{\nu_{i_0}}{x_{i_0}}x_n)} &\int_{N^{i_0}\bs N^{i_0}(\mathbb R)}e^{-2 \pi i Tr_{F/\mathbb Q}(\xi_1(\nu_1-\frac{\nu_{i_0}}{x_{i_0}}x_1) +\cdots + \xi_{n-1}(\nu_{n-1}-\frac{\nu_{i_0}}{x_{i_0}}x_{n-1}))}d\xi^{i_0} \nonumber \\
\times &\int_{N_{i_0}(\mathbb R)} f(x_1, \dots , x_{n-1}, x_{i_0}\xi_{i_0})e^{-2 \pi i Tr_{F/\mathbb Q}(\nu_{i_0}\xi_{i_0})}d\xi_{i_0}. \label{eq423}
\end{align}
Here the first integral in (\ref{eq423}) is $0$ unless $\nu = \frac{\nu_{i_0}}{x_{i_0}}(x_1, \dots, x_{n-1})$, in which case equal to $vol(N^{i_0}\bs N^{i_0}(\mathbb R))$. Therefore we assume $\nu = \frac{\nu_{i_0}}{x_{i_0}}(x_1, \dots, x_{n-1})$.
On the other hand, the second integral in (\ref{eq423}) can be written as
\begin{align}
\prod_{\sigma} |\det \overline g_{\sigma}|^{n_{\sigma}s} \int_{F_{\sigma}} \frac{e^{-2 \pi i Tr_{F_{\sigma}/\mathbb R}(\nu_{i_0}t_{\sigma})}}{\left(||\bar x \overline g_{\sigma}||^2 + |\bar x \mathbf x_{n,\sigma} + x_{i_0}t_{\sigma}|^2\right)^{nn_{\sigma}s/2}} dt_{\sigma},
\end{align}
where $\bar x=(x_1, \dots, x_{n-1})$ is the first $n-1$ components of $x$, and $\mathbf x_{n,\sigma} \in F_{\sigma}^{n-1}$ is the $\sigma$-component of $\mathbf x_n$, and $\bar x \mathbf x_{n,\sigma}$ is the scalar product. 
By an elementary computation, we have
\begin{align}
& \int_{F_{\sigma}} \frac{e^{-2 \pi i Tr_{F_{\sigma}/\mathbb R}(\nu_{i_0}t_{\sigma})}}{\left(||\bar x \overline g_{\sigma}||^2 + |\bar x \mathbf x_{n,\sigma} + x_{i_0}t_{\sigma}|^2\right)^{nn_{\sigma}s/2}} dt_{\sigma} \nonumber \\
&=
\begin{cases} \displaystyle
    n_{\sigma} \pi^{\frac{n_{\sigma}}{2}} 
    \frac{\Gamma(\frac{nn_{\sigma}s-n_{\sigma}}{2})}{\Gamma(\frac{nn_{\sigma}s}{2})}
    |\sigma (x_{i_0})|^{-n_{\sigma}} ||\bar x \overline g_{\sigma}||^{n_{\sigma}(1-ns)} & (\nu_{i_0}=0) \\
    2^{n_{\sigma}} \pi^{\frac{n_{\sigma}ns}{2}} 
    e^{2 \pi i Tr_{F_{\sigma}/\mathbb R}(\nu \mathbf x_{n,\sigma})}
    \frac{K_{\frac{nn_{\sigma}s-n_{\sigma}}{2}}(2\pi ||\nu  \overline g_{\sigma}||)}{\Gamma(\frac{nn_{\sigma}s}{2})} 
    \frac{|\sigma(\frac{\nu_{i_0}}{x_{i_0}})|^{n_{\sigma}(ns-1)}}{|\sigma (x_{i_0})|^{n_{\sigma}}} 
    ||\nu \overline g_{\sigma}||^{\frac{n_{\sigma}}{2}(1-ns)} & (\nu_{i_0} \neq 0)
  \end{cases}
\end{align}
\noindent \underline{\bf Step 3.} (Calculation of the summation)

We take a closer look at $L_{\mathfrak a, i_0}/ N_{i_0}$. 
For $x'=(x_1,\dots, x_{n-1}) \in F^{n-1}-\{0\}$, define
\begin{align}
\mathfrak b_{x'}&:=\{ \alpha \in F \mid \alpha x_i \in \mathfrak a_i, 1\leq \forall i \leq n-1\}, \\
S_{x'}&:=\{\alpha \in F \mid (x_1, \dots, x_{n-1}, \alpha) \in L_{\mathfrak a}\},
\end{align}
so that we have $Fx'\cap \overline L =\mathfrak b_{x'}x'$ and $L_{\mathfrak a, i_0} = \coprod_{x' \in \overline L_{i_0}} \{x'\} \times S_{x'}$.
Note that $\mathfrak b_{x'}$ is a fractional $\mathcal O_F$-ideal since $x' \neq 0$, and we have, for any integral ideal $\mathfrak m \subset \mathcal O_F$, 
\begin{align}\label{eq429}
\{x' \in \overline L -\{0\} \mid \mathfrak a \mathfrak b_{x'}^{-1} = \mathfrak m\} = \overline L_{\mathfrak a \mathfrak m^{-1}},
\end{align}
\begin{lem}\label{lem1 in pf}
We have $S_{x'}=\emptyset$ unless $\mathfrak a \subset  \mathfrak b_{x'}$, in which case we have
\begin{enumerate}[{\rm (i)}]
\item  $S_{x'}=\{\alpha \in \mathfrak a_n \mathfrak a^{-1} \mid  v_{\mathfrak p}(\alpha) = v_{\mathfrak p}(\mathfrak a_n \mathfrak a^{-1}) ~\forall \mathfrak p \text{\rm : prime ideal } \mathfrak p | \mathfrak a \mathfrak b_{x'}^{-1}\}$, where $v_{\mathfrak p}$ is the usual additive $\mathfrak p$-adic valuation. In particular, $\mathfrak a_n \mathfrak b_{x'}^{-1}$ acts on $S_{x'}$ by addition.
\item $S_{x'}/\mathfrak a_n \mathfrak b_{x'}^{-1} = (\mathfrak a_n \mathfrak a^{-1}/\mathfrak a_n \mathfrak b_{x'}^{-1})^{\times}$: the set of generators of $\mathfrak a_n \mathfrak a^{-1}/\mathfrak a_n \mathfrak b_{x'}^{-1}$ as an $\mathcal O_F/\mathfrak a \mathfrak b_{x'}^{-1}$-module. 
\item $L_{\mathfrak a, i_0}/N_{i_0} = \coprod_{x' \in \overline L_{i_0}} \{x'\} \times (S_{x'}/x_{i_0} \mathfrak a_n\mathfrak a_{i_0}^{-1})$, where $x'=(x_1,\dots, x_{n-1})$.
\end{enumerate}
\end{lem}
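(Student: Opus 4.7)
The plan is to reduce the three claims to routine local valuation calculations; once (i) is established, (ii) and (iii) follow with little additional effort. For (i), I would write $x=(x',\alpha)\in F^n$ with $x'\ne 0$ and observe directly from the definitions that
\[Fx\cap L \;=\; \{\beta x : \beta\in F,\ \beta x_i\in\mathfrak{a}_i\ (i\le n-1),\ \beta\alpha\in\mathfrak{a}_n\}\;=\;\bigl(\mathfrak{b}_{x'}\cap\mathfrak{a}_n\alpha^{-1}\bigr)\cdot x,\]
interpreting $\mathfrak{a}_n\alpha^{-1}$ as $F$ when $\alpha=0$. The membership $x\in L_{\mathfrak{a}}$ thus becomes the ideal identity $\mathfrak{b}_{x'}\cap\mathfrak{a}_n\alpha^{-1}=\mathfrak{a}$, which at each prime $\mathfrak p$ reads
\[v_{\mathfrak p}(\mathfrak{a})=\max\bigl\{v_{\mathfrak p}(\mathfrak{b}_{x'}),\ v_{\mathfrak p}(\mathfrak{a}_n)-v_{\mathfrak p}(\alpha)\bigr\}.\]
The two resulting inequalities force $\mathfrak{a}\subset\mathfrak{b}_{x'}$ and $\alpha\in\mathfrak{a}_n\mathfrak{a}^{-1}$; the demand that the maximum actually be attained then says that at every $\mathfrak p\mid\mathfrak{a}\mathfrak{b}_{x'}^{-1}$---i.e.\ the primes where the first argument is strictly smaller---the second must match $v_{\mathfrak p}(\mathfrak{a}_n\mathfrak{a}^{-1})$ exactly. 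This yields the description of $S_{x'}$ in (i).

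For (ii), that description exhibits $S_{x'}$ as the preimage, under the surjection $\mathfrak{a}_n\mathfrak{a}^{-1}\twoheadrightarrow\mathfrak{a}_n\mathfrak{a}^{-1}/\mathfrak{a}_n\mathfrak{b}_{x'}^{-1}$, of the set of generators of this cyclic $\mathcal{O}_F/\mathfrak{a}\mathfrak{b}_{x'}^{-1}$-module. To justify the quotient I would first verify via the ultrametric inequality that translates by elements of $\mathfrak{a}_n\mathfrak{b}_{x'}^{-1}$ have strictly higher $\mathfrak p$-valuation at each $\mathfrak p\mid\mathfrak{a}\mathfrak{b}_{x'}^{-1}$ and thus preserve the valuation equality from (i), and then observe that the generator condition is manifestly invariant under those shifts, yielding the bijection $S_{x'}/\mathfrak{a}_n\mathfrak{b}_{x'}^{-1}\stackrel{\sim}{\to}(\mathfrak{a}_n\mathfrak{a}^{-1}/\mathfrak{a}_n\mathfrak{b}_{x'}^{-1})^{\times}$.

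For (iii), the action of $\xi\in N_{i_0}$ on $L_{\mathfrak{a},i_0}$ preserves the first $n-1$ coordinates and adds $x_{i_0}\xi_{i_0}$ to the last, with $\xi_{i_0}$ ranging over $\mathfrak{a}_{i_0}^{-1}\mathfrak{a}_n$, so the orbits fiber over $\overline{L}_{i_0}$ with fiber at $x'$ equal to $S_{x'}/x_{i_0}\mathfrak{a}_n\mathfrak{a}_{i_0}^{-1}$. That this quotient is well-defined follows from the inclusion $x_{i_0}\mathfrak{a}_n\mathfrak{a}_{i_0}^{-1}\subset\mathfrak{a}_n\mathfrak{b}_{x'}^{-1}$, which is immediate from the tautology $x_{i_0}\mathfrak{b}_{x'}\subset\mathfrak{a}_{i_0}$, combined with (ii). I expect the main obstacle to be the valuation bookkeeping in (i): in particular, keeping the convention $\mathfrak p\mid\mathfrak{a}\mathfrak{b}_{x'}^{-1}\iff v_{\mathfrak p}(\mathfrak{a})>v_{\mathfrak p}(\mathfrak{b}_{x'})$ aligned consistently with the case split on which argument of the max dominates.
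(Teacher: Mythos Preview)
Your proposal is correct and follows essentially the same route as the paper. The paper's proof is even terser: it records the single equivalence $\alpha\in S_{x'}\Leftrightarrow \mathfrak{b}_{x'}\cap\alpha^{-1}\mathfrak{a}_n=\mathfrak{a}\Leftrightarrow \mathcal{O}_F\cap\alpha^{-1}\mathfrak{a}_n\mathfrak{b}_{x'}^{-1}=\mathfrak{a}\mathfrak{b}_{x'}^{-1}$ and then declares (ii) to follow directly from (i) and (iii) to be obvious, leaving the prime-by-prime valuation bookkeeping you spell out implicit.
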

\begin{proof}
The first assertion and (i) follows from the fact
\begin{align}
\alpha \in S_{x'} \Leftrightarrow  \mathfrak b_{x'} \cap \frac{1}{\alpha}\mathfrak a_n =\mathfrak a
\Leftrightarrow \mathcal O_F \cap \frac{1}{\alpha}\mathfrak a_n \mathfrak b_{x'}^{-1} = \mathfrak a \mathfrak b_{x'}^{-1}, \quad \forall \alpha \in F^{\times}.
\end{align}
(ii) follows directly from (i), and (iii) is obvious.
\end{proof}

\if0
Moreover, for $\alpha \in F^{\times}$, we have
\begin{align}
\alpha \in S_{x'} \Leftrightarrow  \mathfrak b_{x'} \cap \frac{1}{\alpha}\mathfrak a_n =\mathfrak a
\Leftrightarrow \mathcal O_F \cap \frac{1}{\alpha}\mathfrak a_n \mathfrak b_{x'}^{-1} = \mathfrak a \mathfrak b_{x'}^{-1}.
\end{align}
We assume $\mathfrak b_{x'} \supset \mathfrak a$, since otherwise we have $S_{x'}=\emptyset$.
Then, by comparing the $\mathfrak p$-adic valuations of $\mathcal O_F \cap \frac{1}{\alpha}\mathfrak a_n \mathfrak b_{x'}^{-1}$ and $\mathfrak a \mathfrak b_{x'}^{-1}$ for all non-zero prime ideals $\mathfrak p$, we have
\begin{equation}
S_{x'}=\{\alpha \in \mathfrak a_n \mathfrak a^{-1} \mid  v_{\mathfrak p}(\alpha) = v_{\mathfrak p}(\mathfrak a_n \mathfrak a^{-1}) ~\forall \mathfrak p | \mathfrak a \mathfrak b_{x'}^{-1}\},
\end{equation}
where $v_{\mathfrak p}$ is the usual additive $\mathfrak p$-adic valuation. Then $\mathfrak a_n \mathfrak b_{x'}^{-1}$ acts on $S_{x'}$ by addition, and we obtain 
\begin{equation}\label{eq431}
S_{x'}/\mathfrak a_n \mathfrak b_{x'}^{-1} = (\mathfrak a_n \mathfrak a^{-1}/\mathfrak a_n \mathfrak b_{x'}^{-1})^{\times},
\end{equation}
where the right hand side denotes the set of generators of $\mathfrak a_n \mathfrak a^{-1}/\mathfrak a_n \mathfrak b_{x'}^{-1}$ as an $\mathcal O_F/\mathfrak a \mathfrak b_{x'}^{-1}$-module. 
Now, by the definition of $S_{x'}$, we have $L_{\mathfrak a, i_0} = \coprod_{x' \in \overline L_{i_0}} \{x'\} \times S_{x'}$, and we get
\begin{align}
L_{\mathfrak a, i_0} = \coprod_{x' \in \overline L_{i_0}} \{x'\} \times S_{x'},
\end{align}
and the action of $N_{i_0}$ on $L_{\mathfrak a,i_0}$ is nothing but the action of $x_{i_0} \mathfrak a_n\mathfrak a_{i_0}^{-1} \subset \mathfrak a_{n} \mathfrak b_{x'}^{-1}$ on $S_{x'}$ by addition for each $x'=(x_1,\dots, x_{n-1})$, i.e., \marginpar{??? not i.e.}
\begin{align}
L_{\mathfrak a, i_0}/N_{i_0} = \coprod_{x' \in \overline L_{i_0}} \{x'\} \times (S_{x'}/x_{i_0} \mathfrak a_n\mathfrak a_{i_0}^{-1}),
\end{align}
where $x'=(x_1,\dots, x_{n-1})$.
\fi

Note that $\nu_{i_0}=0$ if and only if $\nu =0$ by the assumption $\nu = \frac{\nu_{i_0}}{x_{i_0}}(x_1, \dots, x_{n-1})$. Therefore, in the case $\nu =0$, we obtain
\begin{align}
&\mathscr I_{0,i_0} = \sum_{x' \in \mathcal O_F^{\times}\bs \overline L_{i_0}} \sum_{\alpha \in S_{x'}/x_{i_0} \mathfrak a_n\mathfrak a_{i_0}^{-1}} 
\frac{vol(N^{i_0}\bs N^{i_0}(\mathbb R))}{|N_{F/\mathbb Q}(x_{i_0})|} 
2^{r_2(F)}\pi^{\frac{d}{2}}  \frac{\Gamma_F(ns-1)}{\Gamma_F(ns)}
 \prod_{\sigma}  \frac{|\det \overline{g}_{\sigma}|^{n_{\sigma}s}}{||x' \overline g_{\sigma}||^{n_{\sigma}(ns-1)}} \nonumber \\ 
&=  \frac{vol(N\bs N(\mathbb R))}{\sqrt{|d_F|}} 2^{r_2(F)}\pi^{\frac{d}{2}}  \frac{\Gamma_F(ns-1)}{\Gamma_F(ns)} \left(\frac{N \mathfrak a}{N \mathfrak a_n}\right)
\sum_{\substack{x' \in \mathcal O_F^{\times}\bs \overline L_{i_0}\\ \mathfrak a \mathfrak b_{x'}^{-1} \subset \mathcal O_F}} \frac{\varphi(\mathfrak a \mathfrak b_{x'}^{-1})}{N(\mathfrak a \mathfrak b_{x'}^{-1})}  
\prod_{\sigma}  \frac{|\det \overline{g}_{\sigma}|^{n_{\sigma}s}}{||x' \overline g_{\sigma}||^{n_{\sigma}(ns-1)}},
\end{align}
where we use 
\begin{align}
\frac{\#(S_{x'}/x_{i_0} \mathfrak a_n\mathfrak a_{i_0}^{-1})}{|N_{F/\mathbb Q}(x_{i_0})|vol(N_{i_0}\bs N_{i_0}(\mathbb R))}
&= \frac{\varphi(\mathfrak a \mathfrak b_{x'}^{-1})}{N(\mathfrak a_n\mathfrak b_{x'}^{-1})\sqrt{|d_F|}}. 
\end{align}
By taking the summation over $1 \leq i_0 \leq n-1$, and using (\ref{eq429}) and Proposition \ref{prop eis ser} (3), we obtain Theorem \ref{fourier coeff thm} (1). 

Similarly, in the cases where $\nu \neq 0$, we obtain
\begin{align}\label{eq438}
\mathscr I_{\nu,i_0} =&vol(N\bs N(\mathbb R)) 2^d \pi^{\frac{dns}{2}} e^{2\pi i \braket{\nu,\mathbf x_n}} \frac{K_F(ns-1,(2\pi ||\nu \overline g_{\sigma}||)_{\sigma})}{\Gamma_F(ns)}
\prod_{\sigma}\frac{|\det \overline g_{\sigma}|^{n_{\sigma}s}}{||\nu \overline g_{\sigma}||^{\frac{n_{\sigma}}{2}(ns-1)}} \nonumber \\
&\times \sum_{\substack{x' \in \mathcal O_F^{\times} \bs \overline L_{i_0}\\ \nu=\frac{\nu_{i_0}}{x_{i_0}}x'}}
\frac{|N_{F/\mathbb Q}(\frac{\nu_{i_0}}{x_{i_0}})|^{ns-1}}{|N_{F/\mathbb Q}(x_{i_0})|vol(N_{i_0}\bs N_{i_0}(\mathbb R))}
\sum_{\alpha \in S_{x'}/x_{i_0} \mathfrak a_n\mathfrak a_{i_0}^{-1}} e^{2\pi i Tr_{F/\mathbb Q}(\frac{\nu_{i_0}}{x_{i_0}}\alpha)}.
\end{align}
Now, suppose $\nu \in (N^{\vee})_{\mathfrak n^{-1}}$ for an integral ideal $\mathfrak n \subset \mathcal O_F$ (and $\nu_{i_0}\neq 0$). Then we claim that the last summation in (\ref{eq438}) is $0$ unless $\mathfrak a \mathfrak b_{x'}^{-1} \subset \mathcal O_F$, in which case we have
\begin{align}
\sum_{\alpha \in S_{x'}/x_{i_0} \mathfrak a_n\mathfrak a_{i_0}^{-1}} e^{2\pi i Tr_{F/\mathbb Q}(\frac{\nu_{i_0}}{x_{i_0}}\alpha)}
=\#(\mathfrak a_n \mathfrak b_{x'}^{-1}/x_{i_0} \mathfrak a_n\mathfrak a_{i_0}^{-1}) \tau(\mathfrak a \mathfrak b_{x'}^{-1}, \mathfrak n \mathfrak d_F^{-1}). \label{eqn a}
\end{align} 
Indeed, by (\ref{eq429}), we have $(N^{\vee})_{\mathfrak n^{-1}}=\{\nu \in N^{\vee} \mid  \mathfrak a_n \mathfrak b_{\nu}^{-1} \mathfrak d_F=\mathfrak n\}$, and therefore, for $x' \in \overline L_{i_0}$ such that $\nu=\frac{\nu_{i_0}}{x_{i_0}}x'$, we have 
\begin{align}
\frac{\nu_{i_0}}{x_{i_0}} \mathfrak a_n \mathfrak b_{x'}^{-1}=\mathfrak a_n \mathfrak b_{\nu}^{-1} = \mathfrak n\mathfrak d_F^{-1} \subset \mathfrak d_F^{-1}.
\end{align}
Now, the claim follows easily from Lemma \ref{lem1 in pf}.
Thus, for $\nu \in (N^{\vee})_{\mathfrak n^{-1}}$, we obtain $\mathscr I_{\nu,i_0} =0$ unless $\nu_{i_0}\neq 0$ and $\nu_j=0$ for all $j < i_0$, in which case we have
\begin{multline}
\mathscr I_{\nu,i_0} 
=
\frac{vol(N\bs N(\mathbb R))}{\sqrt{|d_F|}} 2^d \pi^{\frac{dns}{2}} e^{2\pi i \braket{\nu,\mathbf x_n}} \frac{K_F(ns-1,(2\pi ||\nu \overline g_{\sigma}||)_{\sigma})}{\Gamma_F(ns)}
\prod_{\sigma}\frac{|\det \overline g_{\sigma}|^{n_{\sigma}s}}{||\nu \overline g_{\sigma}||^{\frac{n_{\sigma}}{2}(ns-1)}} \\ 
\times \left(\frac{N \mathfrak a}{N \mathfrak a_n}\right)^{ns} N(\mathfrak n \mathfrak d_F^{-1})^{ns-1}
\sum_{\substack{\mathfrak m \in [\mathfrak a \mathfrak a_n^{-1} \mathfrak n \mathfrak d_{F}^{-1}]\\ \mathfrak m \subset \mathcal O_F}}
\frac{\tau(\mathfrak m, \mathfrak n \mathfrak d_F^{-1})}{N\mathfrak m^{ns}}.
\end{multline}
Here we use the identity
\begin{align}
\left\{x' \in \mathcal O_F^{\times} \bs \overline L_{i_0} \middle| \nu=\frac{\nu_{i_0}}{x_{i_0}}x', \mathfrak a \mathfrak b_{x'}^{-1} = \mathfrak m \right\}
\overset{i_{0}\text{-th proj.}}{\overset{\sim}{\longrightarrow}} &
\left\{x_{i_0} \in \mathcal O_F^{\times} \bs (\mathfrak a_{i_0}-\{0\}) \middle| \frac{x_{i_0}}{\nu_{i_0}}\mathfrak a\mathfrak b_{\nu}^{-1}=\mathfrak m \right\} \nonumber \\
=\quad\quad& \left\{x_{i_0} \in \mathcal O_F^{\times} \bs F^{\times} \middle| \frac{x_{i_0}}{\nu_{i_0}}\mathfrak a\mathfrak b_{\nu}^{-1}=\mathfrak m \right\},
\end{align}
for any integral ideal $\mathfrak m \in [\mathfrak a \mathfrak a_n^{-1} \mathfrak n \mathfrak d_{F}^{-1}]$.
By taking the summation over $1\leq i_0 \leq n-1$ and $\nu \in N^{\vee}-\{0\}$, we obtain Theorem \ref{fourier coeff thm} (2).
This completes the proof. \qed 

\subsection{Proof of Theorem \ref{phi psi conti}}\label{phi psi proof}
The assertion for $\varPhi_j(z,s)$ follows directly from Corollary \ref{z_a cor}. To prove the assertion for $\varPsi_j(z,s)$, again by Corollary \ref{z_a cor}, we rewrite $\varPsi_j(z,s)$ as 
\begin{multline}\label{eqn459}
\varPsi_j(z,s)=d_j(z,s)(N\mathfrak d_F^{-1})^{ns-j-1} \frac{1}{h_F} \sum_{\chi \in \Hom(Cl_F,\mathbb C^{\times})} \frac{\chi(\mathfrak a^{-1}\mathfrak a_{n-j} \mathfrak d_F)}{L(ns, \chi)}\sum_{\mathscr A\in Cl_F}\chi(\mathscr A^{-1})\\
\times \sum_{\substack{\mathfrak n \in \mathscr A\\ \mathfrak n \subset \mathcal O_F}} \sum_{\nu \in (\Lambda^{(j+1)})_{\mathfrak n^{-1}}} (N\mathfrak n)^{ns-j-1} \sigma_{1+j-ns}(\mathfrak n, \chi)
\frac{K_F(ns-j-1, (2 \pi ||\nu g^{(j+1)}_{\sigma}||)_{\sigma})}{\prod_{\sigma} ||\nu g^{(j+1)}_{\sigma}||^{\frac{n_{\sigma}}{2}(ns-j-1)}} e^{2\pi i \braket{\nu, \mathbf x_{n-j}}}. 
\end{multline}
Therefore, it suffices to prove that the second row of (\ref{eqn459}) converges absolutely and compactly for $s \in \mathbb C$. 
We use the following asymptotic formulas.

\begin{lem}\label{asymp lem}
Let $R$ be any real number such that $R>1$. 
\begin{enumerate}[{\rm (1)}]
\item For any integral ideal $\mathfrak n$, any $\chi \in \Hom(Cl_F,\mathbb C^{\times})$, and any $s \in \mathbb C$ such that $|\re{s}|\leq R$, we have
\begin{align}
|\sigma_{s}(\mathfrak n, \chi)| \leq (N\mathfrak n)^{2R}.
\end{align}
\item There exists $C>0$ such that for any $x \in \mathbb R_{>0}$ and any $s\in \mathbb C$ such that $|\re{s}|\leq R$, we have
\begin{align}
\left|K_s(x) x^{-s}\right| \leq C  e^{-\frac{1}{2}x} x^{-10R}.
\end{align}
\item  There exists $C>0$ such that for any $x=(x_{\sigma})_{\sigma} \in T_F$, we have
\begin{align}
\sum_{u \in U_F}\prod_{\sigma \in S_F}e^{-x_{\sigma}u_{\sigma}} \leq C N_{F/\mathbb Q}(x)^{-\frac{1}{4}}.
\end{align}
See Section \ref{heeg obj} for the definition of $U_F \subset T_F$.
\end{enumerate}
\end{lem}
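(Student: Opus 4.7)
Part (1) follows from a routine divisor-sum bound: using $|\chi(\mathfrak m)|=1$ and $N\mathfrak m^{\re{s}}\leq N\mathfrak n^R$ for $\mathfrak m\mid\mathfrak n$, we get $|\sigma_s(\mathfrak n,\chi)|\leq d(\mathfrak n)N\mathfrak n^R$, and the trivial estimate $d(\mathfrak n)\leq\#(\mathcal O_F/\mathfrak n)=N\mathfrak n$, together with $R>1$, yields the claim. For part (2), the plan is to reduce to a real-order Bessel estimate: from $K_s(x)=\int_0^\infty e^{-x\cosh t}\cosh(st)\,dt$ and the triangle inequality one obtains $|K_s(x)|\leq K_{|\re{s}|}(x)\leq K_R(x)$ (the second inequality since $\cosh(\nu t)$ is monotone in $|\nu|$), and combined with $x^{-\re{s}}\leq x^R+x^{-R}$ the task reduces to bounding $K_R(x)(x^R+x^{-R})$. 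This splits into the small-$x$ regime, where $K_R(x)\sim\tfrac{1}{2}\Gamma(R)(2/x)^R$ gives $O(x^{-2R})$, and the large-$x$ regime, where $K_R(x)\sim\sqrt{\pi/(2x)}\,e^{-x}$ gives exponential decay; in both cases the slack factor $10R$ allows the polynomial powers to be absorbed into the constant.

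Part (3) will be handled by a lattice-to-integral comparison followed by an integral estimate. By Dirichlet's unit theorem $U_F$ is a cocompact lattice in $T_{F/\mathbb Q}$, so we fix a compact fundamental domain $D$ with $v_\sigma\in[a,A]$ for all $v\in D$ and universal $0<a\leq A$. For $t=uv\in uD$ we have $u_\sigma=t_\sigma/v_\sigma\geq t_\sigma/A$, hence $\prod_\sigma e^{-x_\sigma u_\sigma}\leq\prod_\sigma e^{-(x_\sigma/A)t_\sigma}$ pointwise on $uD$, and summing over the tiling $\{uD\}_{u\in U_F}$ of covolume $R_F$ gives
\[\sum_{u\in U_F}\prod_\sigma e^{-x_\sigma u_\sigma}\leq R_F^{-1}\int_{T_{F/\mathbb Q}}\prod_\sigma e^{-(x_\sigma/A)t_\sigma}\,\dtimes t_{F/\mathbb Q}.\]
Next, the substitution $t_\sigma=(AN_{F/\mathbb Q}(x)^{1/n}/x_\sigma)u_\sigma$ preserves both $T_{F/\mathbb Q}$ and $\dtimes t_{F/\mathbb Q}$, and reduces the integral to $J(r):=\int_{T_{F/\mathbb Q}}e^{-r\sum_\sigma u_\sigma}\,\dtimes u_{F/\mathbb Q}$ with $r=N_{F/\mathbb Q}(x)^{1/n}/A$. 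It thus suffices to show $J(r)\leq Cr^{-n/4}$.

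The main obstacle is this final uniform bound on $J(r)$. The plan is a Mellin-transform computation: factoring $T_F\simeq T_{F/\mathbb Q}\times\mathbb R_{>0}$ via the norm map and equating
\[\int_{T_F}e^{-r\sum_\sigma u_\sigma}\prod_\sigma u_\sigma^{\alpha n_\sigma}\,\dtimes u_F=r^{-n\alpha}\prod_\sigma n_\sigma\Gamma(\alpha n_\sigma)\]
with its slicing by the level sets of $N_{F/\mathbb Q}$ yields $\tilde J(z)=\tfrac{1}{n}\prod_\sigma n_\sigma\Gamma(zn_\sigma/n)$. Mellin inversion and shifting the contour past the pole of order $r_F$ at $z=0$ then produce $J(r)=O((\log(1/r))^{r_F-1})$ as $r\to 0^+$, while Stirling-type decay of the gamma factors in vertical strips gives $J(r)=O(e^{-cr})$ as $r\to\infty$. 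Both regimes are dominated by $r^{-n/4}$, completing the estimate.
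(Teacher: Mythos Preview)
Your treatments of (1) and (2) are correct and parallel the paper's: the paper omits (1) as elementary, and for (2) it uses the representation $K_s(x)=\tfrac12\int_0^\infty e^{-\frac{x}{2}(u+u^{-1})}u^s\,\tfrac{du}{u}$ rather than the $\cosh$-form, but otherwise the reduction to $K_R(x)$ followed by the two asymptotics is the same.

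For (3) your route is genuinely different. The paper proceeds by the pointwise inequality
\[
\sum_\sigma x_\sigma u_\sigma \;\geq\; \sum_{x_\sigma u_\sigma\geq 1}\log(x_\sigma u_\sigma)\;\geq\;\tfrac14\Bigl(\log N_{F/\mathbb Q}(x)+\sum_\sigma n_\sigma\bigl|\log(x_\sigma u_\sigma)\bigr|\Bigr),
\]
(using $t\geq\log t$ and $n_\sigma\leq 2$, together with $N_{F/\mathbb Q}(u)=1$), which peels off $N_{F/\mathbb Q}(x)^{-1/4}$ directly and leaves a translated lattice sum of the shape $\sum_{u\in U_F}\exp(-\tfrac14\sum_\sigma n_\sigma|\log u_\sigma+\log x_\sigma|)$; the latter is bounded uniformly in $x$ because $U_F$ is a lattice in $T_{F/\mathbb Q}$. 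This is a five-line argument with no analysis beyond Dirichlet's theorem. Your approach---fundamental-domain comparison, reduction to $J(r)$, Mellin inversion---is correct and in fact recovers the exact Mellin transform $\tilde J(z)=\tfrac{1}{d}\prod_\sigma n_\sigma\Gamma(zn_\sigma/d)$ (note this is $d=[F:\mathbb Q]$, not the paper's $n=[E:F]$; your proposal writes $n$ throughout where $d$ is meant). It buys a sharper asymptotic for $J(r)$ than is needed, at the cost of heavier machinery.

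Two small points to tidy. First, the substitution $t_\sigma=(AN_{F/\mathbb Q}(x)^{1/d}/x_\sigma)u_\sigma$ does not preserve $T_{F/\mathbb Q}$ because the scaling has norm $A^d$; the fix is to absorb $A$ into $x$ first (set $y=x/A$) and use $t_\sigma=(N_{F/\mathbb Q}(y)^{1/d}/y_\sigma)u_\sigma$. Second, ``Stirling-type decay in vertical strips'' of $\tilde J$ yields $J(r)=O_M(r^{-M})$ for every fixed $M$ by shifting the contour to $\re z=M$, which is already enough for $J(r)\leq Cr^{-d/4}$; the stronger claim $J(r)=O(e^{-cr})$ does not follow from that argument alone (it would require a saddle-point analysis, or more simply the direct bound $\sum_\sigma u_\sigma\geq d/2$ on $T_{F/\mathbb Q}$). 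Neither issue affects the validity of your conclusion.
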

\begin{proof}
(1) is elementary and we omit the proof. To prove (2) we use the following well-known asymptotic behavior of the $K$-Bessel function:
\begin{align}\label{asymp k}
\lim_{x \to \infty} \sqrt{x} K_{s}(x)e^x = \sqrt{\frac{\pi}{2}}, \quad
\lim_{x \to 0}K_s(x)x^{s} = 2^{s-1}\Gamma(s), \quad(s\in \mathbb C, \re{s}>0)
\end{align}
Now, since $|K_s(x)x^{-s}|\leq K_{\re s}(x)x^{-\re s}$, we may assume $-R \leq s \leq R$. Then, by the integral expression (\ref{K-bessel}) of $K_s(x)$, we have
\begin{align}
K_s(x) 
&\leq 
\frac{1}{2} \int_{0}^{1} e^{-\frac{1}{2}x(u+\frac{1}{u})}u^{-R}\frac{du}{u}
+\frac{1}{2} \int_{1}^{\infty} e^{-\frac{1}{2}x(u+\frac{1}{u})}u^R\frac{du}{u}\\
&\leq K_{-R}(x)+K_R(x)= 2K_R(x).
\end{align}
Therefore, we have 
\begin{align}
K_{s}(x)x^{-s}e^{x/2}x^{10R} \leq 2K_R(x)x^{10R-s}e^{x/2}.
\end{align}
By the asymptotic formulas, the right hand side can be bounded uniformly in $s$ for $-R \leq s \leq R$. This proves (2). 
To prove (3), first observe that 
\begin{align}
\sum_{\sigma} x_{\sigma}u_{\sigma} &\geq \sum_{\substack{\sigma\\ x_{\sigma}u_{\sigma}\geq 1}} \log(x_{\sigma}u_{\sigma})
\geq \frac{1}{4}\sum_{\substack{\sigma\\ x_{\sigma}u_{\sigma}\geq 1}} 2n_{\sigma} \log(x_{\sigma}u_{\sigma})\\
&= \frac{1}{4}(\log(N_{F/\mathbb Q}(x))+ \sum_{\sigma} n_{\sigma} |\log(x_{\sigma}u_{\sigma})|),
\end{align}
where we use $N_{F/\mathbb Q}(u)=1$. Thus we have
\begin{align}
\sum_{u \in U_F}\prod_{\sigma}e^{-x_{\sigma}u_{\sigma}} 
\leq N_{F/\mathbb Q}(x)^{-\frac{1}{4}} \sum_{u \in U_F} \prod_{\sigma}e^{-\frac{1}{4}(n_{\sigma}|\log(u_{\sigma})+\log (x_{\sigma})|)}.
\end{align}
Then, we easily see that there exists $C>0$ which is independent of $x$ such that 
\begin{align}
\sum_{u \in U_F} \prod_{\sigma}e^{-\frac{1}{4}(n_{\sigma}|\log(u_{\sigma})+\log (x_{\sigma})|)}\leq C,
\end{align}
using the fact that $U_F$ is a lattice in $T_{F/\mathbb Q}$.
This proves (3).
\end{proof}

Let $R>n\geq 1$. Then, by Lemma \ref{asymp lem}, there exist $C_1, C_2>0$ such that for $|\re{ns-j-1}|\leq R$, the second row of (\ref{eqn459}) can be bounded as 
\begin{align}
&\sum_{\substack{\mathfrak n \in \mathscr A}} \sum_{\nu \in (\Lambda^{(j+1)})_{\mathfrak n^{-1}}}\left| (N\mathfrak n)^{ns-j-1} \sigma_{1+j-ns}(\mathfrak n, \chi)
\prod_{\sigma}\frac{K_{\frac{n_{\sigma}}{2}(ns-j-1)}(2 \pi ||\nu g^{(j+1)}_{\sigma}||)}{||\nu g^{(j+1)}_{\sigma}||^{\frac{n_{\sigma}}{2}(ns-j-1)}}\right|\\
\leq &C_1 \sum_{\mathfrak n \in \mathscr A}\sum_{\nu \in (\Lambda^{(j+1)})_{\mathfrak n^{-1}}}
(N\mathfrak n)^{3R}\prod_{\sigma}||\nu g^{(j+1)}_{\sigma}||^{-5n_{\sigma}R}e^{-\pi||\nu g^{(j+1)}_{\sigma}||}\\
\leq & C_1 \sum_{\mathfrak n \in \mathscr A}\sum_{u\in \mathcal O_F^{\times}}\sum_{\nu \in \mathcal O_F^{\times} \bs (\Lambda^{(j+1)})_{\mathfrak n^{-1}}}
(N\mathfrak n)^{3R}\prod_{\sigma}||u \nu g^{(j+1)}_{\sigma}||^{-5n_{\sigma}R}e^{-\pi||u\nu g^{(j+1)}_{\sigma}||}\\
\leq & C_2 \sum_{\mathfrak n \in \mathscr A}\sum_{\nu \in \mathcal O_F^{\times} \bs (\Lambda^{(j+1)})_{\mathfrak n^{-1}}}
(N\mathfrak n)^{3R}\prod_{\sigma}||\nu g^{(j+1)}_{\sigma}||^{-5n_{\sigma}R-\frac{1}{4}n_{\sigma}}. \label{eqn 468}
\end{align}
Let us fix $\mathfrak n \in \mathscr A$. Then, (\ref{eqn 468}) can be rewritten as
\begin{align}
&C_3 \sum_{x\in \mathcal O_F^{\times} \bs \mathfrak n^{-1}} |N_{F/\mathbb Q}(x)|^{3R-5R-\frac{1}{4}}
\sum_{\nu \in \mathcal O_F^{\times} \bs (\Lambda^{(j+1)})_{\mathfrak n^{-1}}}\prod_{\sigma}||\nu g^{(j+1)}_{\sigma}||^{-n_{\sigma}(5R+\frac{1}{4})}\\
=&C_3 (N\mathfrak n)^{2R+\frac{1}{4}} \zeta_F(\mathfrak n^{-1}, 2R+\frac{1}{4})
\sum_{\nu \in \mathcal O_F^{\times} \bs (\Lambda^{(j+1)})_{\mathfrak n^{-1}}}
\prod_{\sigma}||\nu g^{(j+1)}_{\sigma}||^{-n_{\sigma}(5R+\frac{1}{4})},
\end{align} 
where $C_3:=C_2(N\mathfrak n)^{3R}$. 
By Proposition \ref{prop eis ser} (1) applied to the parabolic data $\mathfrak n^{-1} \hookrightarrow \Lambda^{(j+1)}$, the last infinite series converges. This completes the proof of Theorem \ref{phi psi conti}. \qed 

\section{Residue formula and Kronecker's limit formula}\label{res and limit formula}
In this section, we compute the residue and the constant term of our Eisenstein series $E_{L,[\mathfrak a]}(z,s)$ and the ``relative'' partial zeta function $\zeta_{E/F,[\mathfrak a]}(\mathfrak A^{-1},s)$ at $s=1$. 

We keep the notations in Section \ref{fourier exp}. That is, $F$ is a number field of degree $d$, $L\subset F^n$ $(n\geq 2)$ is an $\mathcal O_F$-lattice of the form $L=\mathfrak a_1 \oplus \cdots \oplus \mathfrak a _n$, and $E_{L,[\mathfrak a]}(z,s)$ is the Eisenstein series associated to a parabolic data $\data{\mathfrak a}{L}$, etc.
In the following, for any meromorphic function $f$ on $\mathbb C$, and $a \in \mathbb C$, we denote by $f^{(-1)}(a)$ (resp. $f^{(0)}(a)$) the residue (resp. constant term) of $f(s)$ at $s=a$.

\subsection{Residue formula for $E_{L,[\mathfrak a]}(z,s)$}

Recall that by the Fourier expansion formula (Theorem \ref{fourier exp thm}), we have
\begin{align}\label{eqn 51}
\frac{E_{L,[\mathfrak a]}(z,s)}{\zeta_F(\mathfrak a^{-1},ns)}= \sum_{j=0}^{n-2}(\varPhi_j(z,s)+\varPsi_j(z,s))+\varPhi_{n-1}(z,s).
\end{align}
Then, by Theorem \ref{phi psi conti}, $ \sum_{j=0}^{n-2}(\varPhi_j(z,s)+\varPsi_j(z,s))$ is holomorphic at $s=1$.
On the other hand, by Corollary \ref{z_a cor}, we have
\begin{align}\label{eqn 52}
\varPhi_{n-1}(z,s)=c_{n-1}(z,s) \frac{1}{h_F}\sum_{\chi \in \Hom(Cl_F, \mathbb C^{\times})} \chi( [\mathfrak a_1\mathfrak a^{-1}]) \frac{L(ns-n+1,\chi)}{L(ns,\chi)}.
\end{align}
Therefore we get the following.
\begin{thm}[Residue formula for $E_{L,[\mathfrak a]}(z,s)$]\label{res formula eis}
The Eisenstein series $E_{L,[\mathfrak a]}(z,s)$ has a simple pole at $s=1$, and we have 
\begin{align}
&E_{L,[\mathfrak a]}^{(-1)}(z,1) = c_{n-1}(z,1)\frac{\kappa_F}{n h_F} \frac{\zeta_F(\mathfrak a^{-1},n)}{\zeta_F(n)} \nonumber\\
&= (N\mathfrak a_1\cdots N\mathfrak a_n)^{-1} 
\left(\frac{2^{r_2(F)}\pi^{\frac{d}{2}}}{\sqrt{|d_F|}}\right)^n\frac{2^{r_1(F)}}{\Gamma_F(n)}
\frac{R_F}{n w_F} 
\frac{\zeta_F(\mathfrak a^{-1},n)}{\zeta_F(n)},
\end{align}
where $\zeta_F(s)$ is the Dedekind zeta function of $F$, and $\kappa_F = \zeta_F^{(-1)}(1)$ is its residue at $s=1$.
\end{thm}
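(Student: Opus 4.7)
The starting point is the Fourier expansion formula
$$\frac{E_{L,[\mathfrak a]}(z,s)}{\zeta_F(\mathfrak a^{-1},ns)}= \sum_{j=0}^{n-2}\bigl(\varPhi_j(z,s)+\varPsi_j(z,s)\bigr)+\varPhi_{n-1}(z,s)$$
from Theorem \ref{fourier exp thm}. The plan is to locate the pole at $s=1$ by a term-by-term inspection and then evaluate its residue explicitly.

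First, I would determine which summands are regular at $s=1$. By Theorem \ref{phi psi conti}, $\varPhi_j(z,s)$ can only fail to be holomorphic at a pole of $L(ns-j,\chi)/L(ns,\chi)$. At $s=1$ the numerator is $L(n-j,\chi)$, which has a pole only when $n-j=1$ and $\chi$ is trivial; thus only $\varPhi_{n-1}$ contributes. Similarly, $\varPsi_j(z,s)$ has possible poles only at zeros of $L(ns,\chi)$, but at $s=1$ we have $ns=n\geq 2$, which lies in the region of absolute convergence of the Hecke $L$-function, where Euler products guarantee non-vanishing. Consequently, the entire sum $\sum_{j=0}^{n-2}(\varPhi_j+\varPsi_j)$ is holomorphic at $s=1$.

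Next, I compute the residue of $\varPhi_{n-1}(z,s)$. By Corollary \ref{z_a cor}, (\ref{eqn 52}) gives
$$\varPhi_{n-1}(z,s)=c_{n-1}(z,s)\cdot \frac{1}{h_F}\sum_{\chi \in \Hom(Cl_F,\mathbb C^\times)}\chi([\mathfrak a_1\mathfrak a^{-1}])\,\frac{L(ns-n+1,\chi)}{L(ns,\chi)}.$$
Only the trivial character $\chi_0$ contributes to the pole, and $L(s,\chi_0)=\zeta_F(s)$. By a chain-rule computation, the residue of $\zeta_F(ns-n+1)$ at $s=1$ equals $\kappa_F/n$. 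Since $c_{n-1}(z,s)$ and $\zeta_F(ns)^{-1}$ are both holomorphic and non-zero at $s=1$, I obtain
$$\varPhi_{n-1}^{(-1)}(z,1)=c_{n-1}(z,1)\,\frac{\kappa_F}{n\,h_F\,\zeta_F(n)},$$
and multiplying by the regular factor $\zeta_F(\mathfrak a^{-1},n)$ yields the first claimed equality.

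Finally, I would unpack $c_{n-1}(z,1)$ and apply the analytic class number formula. At $s=1$ each factor $|N_{F/\mathbb Q}(y_k)|^{(n-k)(1-s)}$ equals $1$, the $\Gamma$-ratio reduces to $\Gamma_F(1)/\Gamma_F(n)=\pi^{r_1(F)/2}/\Gamma_F(n)$, and the product $N\mathfrak a_1^{-1}\cdot(N\mathfrak a_n\cdots N\mathfrak a_2)^{-1}$ collapses to $(N\mathfrak a_1\cdots N\mathfrak a_n)^{-1}$. Combining this with the class number formula
$$\kappa_F=\frac{2^{r_1(F)}(2\pi)^{r_2(F)}R_F h_F}{w_F\sqrt{|d_F|}}$$
and using $d=r_1(F)+2r_2(F)$, the powers of $2$, $\pi$, $|d_F|^{1/2}$ assemble exactly into $\bigl(2^{r_2(F)}\pi^{d/2}/\sqrt{|d_F|}\bigr)^n\cdot 2^{r_1(F)}$, producing the second equality. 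This last consolidation is the only non-trivial step, but it amounts to routine bookkeeping of archimedean factors and presents no genuine obstacle.
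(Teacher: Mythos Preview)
Your proposal is correct and follows essentially the same route as the paper: both use the Fourier expansion (Theorem~\ref{fourier exp thm}) together with Theorem~\ref{phi psi conti} to isolate $\varPhi_{n-1}$ as the unique singular term at $s=1$, then read off the residue from the expression~(\ref{eqn 52}) via the simple pole of $\zeta_F$ at $s=1$. Your write-up is more explicit than the paper's one-line proof, but the argument is the same.
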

\begin{rmk}
Note that $c_{n-1}(z,1)$ and hence $E_{L,[\mathfrak a]}^{(-1)}(z,1)$ are independent of $z$. We put $\mathbf c_{n-1}:=c_{n-1}(z,1)$ for simple. 
\end{rmk}

\begin{proof}
This follows from the fact that $L(s,\chi)$ is holomorphic at $s=1$ unless $\chi$ is trivial, in which case $L(s,\chi)=\zeta_F(s)$ has a simple pole. 
\end{proof}

\subsection{Kronecker's limit formula for $E_{L,[\mathfrak a]}(z,s)$}
First recall the classical case where $F=\mathbb Q$, $n=2$, $L=\mathbb Z \oplus \mathbb Z$, and $\mathfrak a=\mathbb Z$ (cf. Example \ref{phi psi ex}). In this case, we have
\begin{align}
\varPhi_0^{(0)}(z,1)&+\varPsi_0^{(0)}(z,1) = \varPhi_0(z,1)+\varPsi_0(z,1) = -\frac{6}{\pi} \log|\eta (z)|^2,\label{eqn 54}\\
\varPhi_1^{(0)}(z,1)& = 
\frac{6}{\pi} \left(\gamma + \frac{1}{2}\left(\psi \left(\frac{1}{2} \right)-\psi(1)-\log y_1 - 2\frac{\zeta_{\mathbb Q}^{(1)}(2)}{\zeta_{\mathbb Q}(2)} \right) \right)\\
&=\frac{6}{\pi} (\gamma -\log 2-\log \sqrt{y_1}) - \frac{6}{\pi} \frac{\zeta_{\mathbb Q}^{(1)}(2)}{\zeta_{\mathbb Q}(2)},
\end{align}
where $\eta(z)$ is the Dedekind eta function, $\gamma = \zeta_{\mathbb Q}^{(0)}(1)$ is Euler's constant, and $\psi(s) := \frac{\Gamma'(s)}{\Gamma(s)}$ is the digamma function, that is, the logarithmic derivative of the gamma function.

\begin{dfn}\label{limit formula dfn}
\begin{enumerate}[{\rm (1)}]
\item We define 
\begin{align}
H_{L,[\mathfrak a]}(z):= \zeta_F(\mathfrak a^{-1},n)\sum_{j=0}^{n-2}(\varPhi_j(z,1)+\varPsi_j(z,1)), \quad\text{for }z\in \mathfrak h_F^n.
\end{align}
\item Let $k/\mathbb Q$ be a number field. 
For a character $\chi \in \Hom(Cl_k,\mathbb C^{\times})$, let $L(s,\chi)$  be the Hecke $L$-function associated to the character $\chi$. Then we set 
\begin{align}
\gamma_k(\chi):= L^{(0)} (1,\chi).
\end{align}
Moreover, let $\Gamma_k(s)$ be the gamma function over $k$ (\ref{abs gamma}). We define the digamma function $\psi_k(s)$ over $k$ as
\begin{align}
\psi_k(s) := \frac{d}{ds} \log \Gamma_k(s) = \frac{\Gamma_k '(s)}{\Gamma_k(s)} = \sum_{\sigma \in S_k} \frac{n_{\sigma}}{2}\psi \left(\frac{n_{\sigma}s}{2}\right).
\end{align}
\end{enumerate}
\end{dfn}

\begin{thm}[Kronecker's limit formula for $E_{L,[\mathfrak a]}(z,s)$]\label{limit formula eis}
We have 
\begin{multline}
E_{L,[\mathfrak a]}^{(0)}(z,1) = H_{L,[\mathfrak a]}(z) + \mathbf c_{n-1} \frac{1}{h_F}\sum_{\chi \in \Hom(Cl_F,\mathbb C^{\times})} \chi ([\mathfrak a_1 \mathfrak a^{-1}])\frac{\zeta_F(\mathfrak a^{-1},n)}{L(n,\chi)} \gamma_F(\chi)  \\
+\mathbf c_{n-1} \frac{\kappa_F}{h_F} \frac{\zeta_F(\mathfrak a^{-1},n)}{\zeta_F(n)} 
\Bigg(\psi_F(1)-\psi_F(n) -\log(N\mathfrak a_1) -\sum_{k=1}^{n-1}\frac{n-k}{n}\log (N_{F/\mathbb Q}(y_k))\Bigg)  \\
+ \mathbf c_{n-1} \frac{\kappa_F}{h_F} \frac{\zeta_F(\mathfrak a^{-1},n)}{\zeta_F(n)} 
\Bigg( \frac{\zeta_F^{(1)}(\mathfrak a^{-1},n)}{\zeta_F(\mathfrak a^{-1},n)} - \frac{\zeta_F^{(1)}(n)}{\zeta_F(n)} \Bigg).
\end{multline}
\end{thm}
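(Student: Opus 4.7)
The starting point is the Fourier expansion formula (Theorem \ref{fourier exp thm}):
\begin{equation*}
E_{L,[\mathfrak a]}(z,s) = \zeta_F(\mathfrak a^{-1},ns)\Bigg[\sum_{j=0}^{n-2}(\varPhi_j(z,s)+\varPsi_j(z,s)) + \varPhi_{n-1}(z,s)\Bigg].
\end{equation*}
By Theorem \ref{phi psi conti} the sum over $0\leq j\leq n-2$ is holomorphic at $s=1$, and since $n\geq 2$ the factor $\zeta_F(\mathfrak a^{-1},ns)$ is regular there as well; evaluating gives the contribution $H_{L,[\mathfrak a]}(z)$ exactly. The entire remaining task is therefore to extract the constant term at $s=1$ of $F(s):=\zeta_F(\mathfrak a^{-1},ns)\varPhi_{n-1}(z,s)$.

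Using (\ref{eqn 52}) and Corollary \ref{z_a cor}, I decompose $F(s)$ as a sum over $\chi\in \Hom(Cl_F,\mathbb C^{\times})$. For $\chi\neq \mathbf 1$, both $L(ns-n+1,\chi)$ and $L(ns,\chi)^{-1}$ are holomorphic at $s=1$, so these summands contribute to $F^{(0)}(1)$ by direct evaluation, yielding the $\chi\neq \mathbf 1$ summands of term 1 (using $L(1,\chi)=\gamma_F(\chi)$). For the trivial character, write the corresponding summand as $A(s)\zeta_F(ns-n+1)$ with $A(s):=c_{n-1}(z,s)\zeta_F(\mathfrak a^{-1},ns)/(h_F\zeta_F(ns))$ holomorphic at $s=1$ and $A(1)=\mathbf c_{n-1}\zeta_F(\mathfrak a^{-1},n)/(h_F\zeta_F(n))$. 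The Laurent expansion $\zeta_F(ns-n+1) = \tfrac{\kappa_F/n}{s-1}+\gamma_F(\mathbf 1)+O(s-1)$ then yields the constant term $\tfrac{\kappa_F}{n}A'(1) + A(1)\gamma_F(\mathbf 1)$. The piece $A(1)\gamma_F(\mathbf 1)$ supplies precisely the missing $\chi=\mathbf 1$ summand of term 1.

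It remains to identify $(\kappa_F/n)A'(1)$ with terms 2 and 3. Logarithmic differentiation gives
\begin{equation*}
\frac{A'(1)}{A(1)} = \frac{c_{n-1}'(z,1)}{c_{n-1}(z,1)} + n\frac{\zeta_F^{(1)}(\mathfrak a^{-1},n)}{\zeta_F(\mathfrak a^{-1},n)} - n\frac{\zeta_F^{(1)}(n)}{\zeta_F(n)},
\end{equation*}
and the last two summands, each multiplied by $\kappa_F A(1)/n$, produce term 3 on the nose. For the first summand, I take the logarithmic derivative of the explicit formula defining $c_{n-1}(z,s)$: the ratio $\Gamma_F(ns-n+1)/\Gamma_F(ns)$ contributes $n(\psi_F(1)-\psi_F(n))$ at $s=1$; the factor $(N\mathfrak a_1)^{n-1-ns}$ contributes $-n\log(N\mathfrak a_1)$; and the factors $\prod_{k=1}^{n-1}|N_{F/\mathbb Q}(y_k)|^{(n-k)(1-s)}$ contribute $-\sum_{k=1}^{n-1}(n-k)\log N_{F/\mathbb Q}(y_k)$. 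Dividing by $n$ and multiplying by $\kappa_F A(1)$ gives term 2.

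The only mildly delicate point is the Laurent-series bookkeeping for the trivial character, ensuring that $A(1)\gamma_F(\mathbf 1)$ combines cleanly with the non-trivial $\chi$ contributions to form the single sum over $\Hom(Cl_F,\mathbb C^{\times})$ in term 1. Beyond that, everything reduces to logarithmic differentiation of explicit meromorphic factors and matching of coefficients.
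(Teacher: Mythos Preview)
Your argument is correct and is exactly the route the paper intends: the paper's own proof simply cites the Fourier expansion (Theorem~\ref{fourier exp thm}), equation~(\ref{eqn 52}), and Theorem~\ref{res formula eis}, and your write-up is a faithful unpacking of precisely those ingredients---separating the holomorphic part $H_{L,[\mathfrak a]}(z)$, splitting the $\chi$-sum into $\chi\neq\mathbf 1$ and $\chi=\mathbf 1$, and extracting the constant term of $A(s)\zeta_F(ns-n+1)$ via logarithmic differentiation of $c_{n-1}(z,s)\zeta_F(\mathfrak a^{-1},ns)/\zeta_F(ns)$. There is nothing to add; your computation of $c_{n-1}'(z,1)/c_{n-1}(z,1)$ is also on the mark (note that the paper's $(N\mathfrak a_i)^{j-ns}$ in the definition of $c_j$ is to be read with $i=n-j$, so for $j=n-1$ it is indeed $(N\mathfrak a_1)^{n-1-ns}$ as you use).
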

\begin{proof}
This follows from the Fourier expansion formula (\ref{eqn 51}) (Theorem \ref{fourier exp thm}), (\ref{eqn 52}), and Theorem \ref{res formula eis}.
\end{proof}

\begin{cor}[Automorphy of the function $H_{L,[\mathfrak a]}$]\label{automorphy H}
The function 
\begin{align}
H^*_{L,[\mathfrak a]}(z):= H_{L,[\mathfrak a]}(z)- \mathbf c_{n-1} \frac{\kappa_F}{h_F} \frac{\zeta_F(\mathfrak a^{-1},n)}{\zeta_F(n)} \sum_{k=1}^{n-1}\frac{n-k}{n}\log (N_{F/\mathbb Q}(y_k))
\end{align}
is an automorphic function on $\mathfrak h_F^n$, that is, we have 
$H^*_{L,[\mathfrak a]}(\gamma z)=H^*_{L,[\mathfrak a]}(z)$ for all $\gamma \in \Gamma_L$.
\end{cor}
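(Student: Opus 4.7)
The idea is to read off the automorphy of $H^*_{L,[\mathfrak a]}$ from that of $E_{L,[\mathfrak a]}(z,s)$ via the Kronecker limit formula (Theorem \ref{limit formula eis}).

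First, by Proposition \ref{prop eis ser} (2), the Eisenstein series $E_{L,[\mathfrak a]}(z,s)$ is invariant under $\Gamma_L$ acting on $z$ for each $s$ with $\re s>1$. By Theorem \ref{fourier exp thm}, $E_{L,[\mathfrak a]}(z,s)$ extends meromorphically in $s$ to the whole complex plane, and by analytic continuation the identity
\[
E_{L,[\mathfrak a]}(\gamma z,s)=E_{L,[\mathfrak a]}(z,s), \qquad \gamma\in \Gamma_L,
\]
persists as an identity of meromorphic functions of $s$. Taking the Laurent coefficients at $s=1$, it follows in particular that $E_{L,[\mathfrak a]}^{(0)}(z,1)$ is an automorphic function of $z$ with respect to $\Gamma_L$.

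Next, inspect the terms on the right-hand side of the Kronecker limit formula in Theorem \ref{limit formula eis}. I first verify that $\mathbf c_{n-1}=c_{n-1}(z,1)$ really is independent of $z$: indeed, setting $s=1$ and $j=n-1$ in the definition of $c_j(z,s)$, the $y$-factors become $\prod_{k=1}^{n-1}|N_{F/\mathbb Q}(y_k)|^{(n-k)(1-s)}\big|_{s=1}=1$, so that $\mathbf c_{n-1}$ depends only on $\mathfrak a_1,\dots,\mathfrak a_n$ and on $F$. Consequently, every summand of Theorem \ref{limit formula eis} other than $H_{L,[\mathfrak a]}(z)$ and the term
\[
-\,\mathbf c_{n-1}\,\frac{\kappa_F}{h_F}\,\frac{\zeta_F(\mathfrak a^{-1},n)}{\zeta_F(n)}\sum_{k=1}^{n-1}\frac{n-k}{n}\log|N_{F/\mathbb Q}(y_k)|
\]
is a constant (in $z$): the $\chi$-sum, $\psi_F(1)-\psi_F(n)-\log(N\mathfrak a_1)$, and the derivative-of-zeta expression are all independent of $z$.

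Therefore we may write
\[
E_{L,[\mathfrak a]}^{(0)}(z,1) = H^*_{L,[\mathfrak a]}(z) + C,
\]
where $C\in\mathbb C$ is the sum of the $z$-independent terms above. Since the left-hand side is $\Gamma_L$-invariant and $C$ is a constant, $H^*_{L,[\mathfrak a]}(z)$ is $\Gamma_L$-invariant, which is the desired automorphy. The only real content is the verification (done above) that the non-automorphic $z$-dependent contribution to $E_{L,[\mathfrak a]}^{(0)}(z,1)$ coming from Theorem \ref{limit formula eis} is exactly the $\log|N_{F/\mathbb Q}(y_k)|$ term subtracted in the definition of $H^*_{L,[\mathfrak a]}$.
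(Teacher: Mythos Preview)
Your proof is correct and follows essentially the same approach as the paper: use the $\Gamma_L$-invariance of $E_{L,[\mathfrak a]}(z,s)$ (via Proposition \ref{prop eis ser}) to deduce that $E_{L,[\mathfrak a]}^{(0)}(z,1)$ is automorphic, then read off from Theorem \ref{limit formula eis} that all $z$-dependent terms on the right-hand side combine into $H^*_{L,[\mathfrak a]}(z)$ plus a constant. Your version is simply more explicit in checking that $\mathbf c_{n-1}$ and the remaining summands are independent of $z$.
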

\begin{proof}
Since $E_{L,[\mathfrak a]}(z,s)$ is an automorphic function (Proposition \ref{prop eis ser}), $E_{L,[\mathfrak a]}^{(0)}(z,1) $ is also an automorphic function on $\mathfrak h_F^n$. 
Then, by Theorem \ref{limit formula eis}, we obtain
$H^*_{L,[\mathfrak a]}(\gamma z)-H^*_{L,[\mathfrak a]}(z)=E_{L,[\mathfrak a]}^{(0)}(\gamma z,1) -E_{L,[\mathfrak a]}^{(0)}(z,1)=0 \text{ for all }\gamma \in \Gamma_L.$
\end{proof}

\begin{rmk}
By the identity (\ref{eqn 54}), we see that the function $H_{L,[\mathfrak a]}(z)$ gives a generalization of the function $-\frac{6}{\pi} \log|\eta (z)|^2$. In fact, in the case where $n=2$, the sum $\sum_{\mathscr A\in Cl_F}H_{L,\mathscr A}(z)$ coincides with the function $h_F(z,\mathfrak a,\mathfrak b)$ considered by Yamamoto \cite[Theorem 2.5.1]{yamamoto08} up to some constant factors, and Corollary \ref{automorphy H} gives a generalization of  \cite[Corollary 2.5.2]{yamamoto08}.
\end{rmk}

\subsection{Application to $\zeta_{E/F,[\mathfrak a]}(\mathfrak A^{-1},s)$}

Let the notations be as in Section \ref{heeg obj} and Section \ref{rel int formula}. That is, $E/F$ is an extension of number fields of degree $n\geq 2$, and $\mathfrak A \subset E$ is a fractional $\mathcal O_E$-ideal. In this section, we take a basis $w:={}^t\!(w_1,\dots, w_n)$ of $E$ over $F$ so that the lattice $L \subset F^n$ corresponding to $\mathfrak A$ via the isomorphism $w: F^n \overset{\sim}{\rightarrow} E$ is of the form $L=\mathfrak a_1\oplus \cdots \oplus \mathfrak a_n$ for some anti-integral $\mathcal O_F$-ideals $\mathfrak a_i$ ($1\leq i\leq n$). 
Note that this is always possible by Proposition \ref{str thm}. Then, $\varpi :T_{E/F}\rightarrow \mathfrak h_F^n$ is the Heegner object associated to $w$. 
Let $\mathfrak a$ be an anti-integral $\mathcal O_F$-ideal, and let $E_{L,[\mathfrak a]}(z,s)$ be the Eisenstein series associated to the parabolic data $\data{\mathfrak a}{L}$.

Combining Theorem \ref{hecke's int formula}, Theorem \ref{fourier exp thm}, Theorem \ref{res formula eis}, and Theorem \ref{limit formula eis},
we obtain the following properties of $\zeta_{E/F,[\mathfrak a]}(\mathfrak A^{-1},s)$.

\begin{thm}\label{rel partial zeta thm}
\begin{enumerate}[{\rm (1)}]
\item The relative partial zeta function $\zeta_{E/F,[\mathfrak a]}(\mathfrak A^{-1},s)$ can be continued meromorphically to whole $s\in \mathbb C$, and has a simple pole at $s=1$.
\item (Residue formula) We have 
\begin{align}
\zeta_{E/F,[\mathfrak a]}^{(-1)}(\mathfrak A^{-1},1)
&= \frac{2^{r_1(E)}(2\pi)^{r_2(E)}R_E}{w_E \sqrt{|d_E|}} \frac{\zeta_F(\mathfrak a^{-1},n)}{\zeta_F(n)}  \\
&=\frac{\kappa_E}{h_E} \frac{\zeta_F(\mathfrak a^{-1},n)}{\zeta_F(n)}, \label{rel zeta res formula}
\end{align}
where $\kappa_E := \zeta_E^{(-1)}(1)$ is the residue of the Dedekind zeta function $\zeta_E(s)$ of $E$. 
\item (Kronecker's limit formula) We have
\begin{multline}
\zeta_{E/F,[\mathfrak a]}^{(0)}(\mathfrak A^{-1},1)= 
n \mathbf c_{n-1}^{-1}  \frac{h_F }{h_E } \frac{\kappa_E}{\kappa_F}  \Bigg( \frac{1}{R_{E/F}} \int_{T_{E/F}/U_{E/F}} H^*_{L,[\mathfrak a]}(\overline{\varpi}(t))\, \dtimes t_{E/F} \\
+ \frac{1}{h_F}\sum_{\chi} \chi([\mathfrak a_1\mathfrak a^{-1}]) \frac{\zeta_F(\mathfrak a^{-1},n)}{L(n,\chi)} \gamma_F(\chi) \Bigg) \\
 +\frac{\kappa_E}{h_E} \frac{\zeta_F(\mathfrak a^{-1},n)}{\zeta_F(n)} 
\Bigg((nr_F-r_E)\log 2 + \log \frac{|d_E|^{\frac{1}{2}}}{|d_F|^{\frac{n}{2}}} 
+ \log \frac{N \mathfrak a_1^{n}}{N\mathfrak a_1\cdots N \mathfrak a_n}\\
 -n \frac{\zeta_F^{(1)}(\mathfrak a^{-1},n)}{\zeta_F(\mathfrak a^{-1},n)} +n \frac{\zeta_F^{(1)}(n)}{\zeta_F(n)}\Bigg).
\end{multline}
\end{enumerate}
\end{thm}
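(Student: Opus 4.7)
The plan is to derive all three assertions directly from the relative Hecke's integral formula. Combining Theorem~\ref{hecke's int formula} with Proposition~\ref{prop eis ser}~(3), the identity
\[
\zeta_{E/F,[\mathfrak a]}(\mathfrak A^{-1},s) \;=\; A(s)\cdot I(s),\quad A(s):=\frac{N(\mathfrak A)^s}{|\Delta_w|^{s/2}\,c_{E/F}(s)},\quad I(s):=\int_{T_{E/F}/U_{E/F}} E_{L,[\mathfrak a]}(\overline{\varpi}(t),s)\,\dtimes t_{E/F}
\]
holds for $\re s>1$. The factor $A(s)$ is meromorphic in $s$ (an exponential times an explicit ratio of Gamma factors from Lemma~\ref{lem gamma factor}) and both finite and nonzero at $s=1$, so all analytic information about $\zeta_{E/F,[\mathfrak a]}(\mathfrak A^{-1},s)$ at $s=1$ is encoded in $I(s)$.

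For (1) and (2), Theorem~\ref{fourier exp thm} gives the meromorphic continuation of $E_{L,[\mathfrak a]}(z,s)$, and Theorem~\ref{res formula eis} shows that its only pole at $s=1$ is simple with $z$-independent residue $\mathbf c_{n-1}\tfrac{\kappa_F}{nh_F}\tfrac{\zeta_F(\mathfrak a^{-1},n)}{\zeta_F(n)}$. Integrating termwise over the compact quotient $T_{E/F}/U_{E/F}$ (of volume $R_{E/F}$), $I(s)$ inherits meromorphic continuation and a simple pole at $s=1$, proving (1); reading off residues then yields
\[
\zeta_{E/F,[\mathfrak a]}^{(-1)}(\mathfrak A^{-1},1) \;=\; A(1)\cdot R_{E/F}\cdot \mathbf c_{n-1}\,\frac{\kappa_F}{n h_F}\,\frac{\zeta_F(\mathfrak a^{-1},n)}{\zeta_F(n)}.
\]
To identify this with $(\kappa_E/h_E)\zeta_F(\mathfrak a^{-1},n)/\zeta_F(n)$, I will substitute the explicit formulas for $\mathbf c_{n-1}=c_{n-1}(z,1)$ and $c_{E/F}(1)$ (Lemma~\ref{lem gamma factor}), the class number formulas for $F$ and $E$, Lemma~\ref{lem regulator} for the regulator ratio, and the discriminant relation $|\Delta_w|\cdot(N\mathfrak a_1\cdots N\mathfrak a_n)^2\cdot 4^{r_2(E)-nr_2(F)}\cdot|d_F|^n=N(\mathfrak A)^2\,|d_E|$. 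This last relation comes from comparing $d_w=(\det W)^2$ with the classical basis discriminant $\det(\sigma w)^2$, whose local ratio at each real place $\sigma$ of $F$ equals $(-4)^{b_\sigma}$ with $b_\sigma$ the number of complex places of $E$ above $\sigma$. All factors of $2$, $\pi$, Gamma, and regulator collapse to the stated form.

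For (3), expanding the master identity to constant order at $s=1$ gives
\[
\zeta_{E/F,[\mathfrak a]}^{(0)}(\mathfrak A^{-1},1) \;=\; A(1)\,I^{(0)}(1) \;+\; A'(1)\,I^{(-1)}(1).
\]
By Theorem~\ref{limit formula eis} and the definition of $H^*_{L,[\mathfrak a]}$ (Corollary~\ref{automorphy H}), the $z$-dependent part of $E_{L,[\mathfrak a]}^{(0)}(z,1)$ is exactly $H^*_{L,[\mathfrak a]}(z)$, so $I^{(0)}(1)$ splits as $\int H^*_{L,[\mathfrak a]}(\overline{\varpi}(t))\,\dtimes t_{E/F}$ plus $R_{E/F}$ times the explicit $z$-independent constants of Theorem~\ref{limit formula eis} (the $\gamma_F(\chi)$-sum and the $\psi_F$/$\zeta_F^{(1)}$-terms). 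The logarithmic derivative
\[
\frac{A'(1)}{A(1)} \;=\; \log N(\mathfrak A)\,-\,\tfrac{1}{2}\log|\Delta_w|\,-\,(\log c_{E/F})'(1)
\]
supplies the logarithmic discrepancy; a short application of Lemma~\ref{lem gamma factor} yields $(\log c_{E/F})'(1)=\psi_E(1)-n\psi_F(n)$ (the $\psi(nd/2)$-contributions from $\Gamma_{E/\mathbb Q}$ and $\Gamma_{F/\mathbb Q}$ cancel). Rewriting $\log N(\mathfrak A)-\tfrac{1}{2}\log|\Delta_w|$ via the discriminant relation from (2) converts these logs into $\log(|d_E|^{1/2}/|d_F|^{n/2})+(nr_F-r_E)\log 2+\log(N\mathfrak a_1^n/\prod_i N\mathfrak a_i)$, which together with the Laurent expansion produces the stated formula.

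The main obstacle throughout is constant-tracking: conceptually the result is a direct corollary of Hecke's integral formula combined with Theorems~\ref{res formula eis} and~\ref{limit formula eis}, but matching the final expressions requires the precise discriminant relation above (with its $4$-power reflecting the archimedean ramification $r_2(E)-nr_2(F)$), the cancellation of $\psi(nd/2)$ in the computation of $(\log c_{E/F})'(1)$, and careful use of Lemma~\ref{lem regulator}. Once these technical identities are assembled, (2) and (3) follow by routine Laurent expansion and collection of terms.
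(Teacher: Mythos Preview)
Your proposal is correct and follows essentially the same route as the paper: combine the relative Hecke integral formula (Theorem~\ref{hecke's int formula}) with the meromorphic continuation (Theorem~\ref{fourier exp thm}), the residue formula (Theorem~\ref{res formula eis}), and the limit formula (Theorem~\ref{limit formula eis}) for the Eisenstein series, then reduce the explicit constants. The only substantive difference is how you obtain the discriminant identity
\[
|\Delta_w|\cdot(N\mathfrak a_1\cdots N\mathfrak a_n)^2\cdot 4^{r_2(E)-nr_2(F)}\cdot|d_F|^n \;=\; N(\mathfrak A)^2\,|d_E|.
\]
You derive it by comparing $(\det W)^2$ with the classical basis discriminant $\det(\tau_j(w_i))^2$ and tracking the local factor $(-4)^{b_\sigma}$ at each real place $\sigma$ of $F$; the paper instead obtains the equivalent form
\[
|\Delta_w|^{1/2} \;=\; \frac{2^{\,nr_2(F)-r_2(E)}\,|d_E|^{1/2}\,N\mathfrak A}{|d_F|^{n/2}\,N\mathfrak a_1\cdots N\mathfrak a_n}
\]
by a one-line covolume comparison $\mathrm{vol}(E_\infty/\mathfrak A)=2^{r_2(E)-nr_2(F)}|\Delta_w|^{1/2}\,\mathrm{vol}(F_\infty^n/L)$, using the normalized Haar measures of Section~\ref{haar meas}. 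Both arguments are valid and yield the same relation; the volume argument is shorter and avoids the local bookkeeping. Your explicit computation of $(\log c_{E/F})'(1)=\psi_E(1)-n\psi_F(n)$ and the Laurent expansion for part~(3) are steps the paper leaves implicit, so in that respect your write-up is more detailed than the paper's own proof.
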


\begin{proof}
(1) follows from Theorem \ref{hecke's int formula} and Theorem \ref{fourier exp thm}. 
To prove (2) and (3), we use the following identity:
\begin{align}
|\Delta_w|^{\frac{1}{2}}(=|N_{F/\mathbb Q}(\det W)|)= \frac{2^{nr_2(F)-r_2(E)}|d_E|^{\frac{1}{2}}N\mathfrak A}{|d_F|^{\frac{n}{2}} N\mathfrak a_1\cdots N\mathfrak a_n}.
\end{align}
Indeed, by the definition of $W$ and the normalization of the Haar measures $dx_E$, $dx_F$ (Section \ref{haar meas}), we see
\begin{align}
|d_E|^{\frac{1}{2}}N\mathfrak A
&= vol(E_{\infty}/\mathfrak A) = 2^{r_2(E)-nr_2(F)}|\Delta_w|^{\frac{1}{2}} vol(F_{\infty}^n/\mathfrak a_1\oplus \dots \oplus \mathfrak a_n) \nonumber \\
&= 2^{r_2(E)-nr_2(F)}|\Delta_w|^{\frac{1}{2}} |d_F|^{\frac{n}{2}} N\mathfrak a_1\cdots N\mathfrak a_n
\end{align}
Then, (2) and (3) follows from Theorem \ref{res formula eis} and Theorem \ref{limit formula eis} combined with Theorem \ref{hecke's int formula}. 
\end{proof}


\if0
\begin{rmk}
In view of  Theorem \ref{res formula eis} and Theorem \ref{rel partial zeta thm} (2), one may ask whether the following hold:
\begin{align}
E_{L,[\mathfrak a]}(z,s) &\overset{?}{=}  \frac{\zeta_F(\mathfrak a^{-1},ns)}{\zeta_F(ns)} E_{L}(z,s) \label{eqn 517}\\ 
\zeta_{E/F,[\mathfrak a]}(\mathfrak A^{-1},s) &\overset{?}{=}  \frac{\zeta_F(\mathfrak a^{-1},ns)}{\zeta_F(ns)} \zeta_{E}(\mathfrak A^{-1},s). \label{eqn 518}
\end{align}
Using Theorem \ref{fourier coeff thm}, we can show that (\ref{eqn 517}) does not hold in general. 
If (\ref{eqn 518}) does not hold, then the decomposition $\zeta_{E}(\mathfrak A^{-1},s)=\sum_{\mathscr A\in Cl_F} \zeta_{E/F,\mathscr A}(\mathfrak A^{-1},s)$ can be considered as a non-trivial decomposition of the partial zeta function $\zeta_{E}(\mathfrak A^{-1},s)$, and it may be an interesting problem to study the arithmetic properties of the values of our function $\zeta_{E/F,\mathscr A}(\mathfrak A^{-1},s)$. However the author has not been able to find any counter example of (\ref{eqn 518}) yet.
\end{rmk}
\fi

\if0
\appendix

\section{Level structure}
In this section we extend the previous arguments to the congruence subgroups. This enables us to treat the partial zeta functions with non-trivial conductors.

Let $\mathfrak f \in \mathcal O_F$ be an integral ideal. We denote by $Cl_{F,\mathfrak f}$ (resp. $h_{F,\mathfrak f}$) the ray class group (resp. the ray class number) of conductor $\mathfrak f$, i.e., $Cl_{F,\mathfrak f}:= I_{F}(\mathfrak f)/P_1(\mathfrak f)$, where $I_{F,\mathfrak f}$ is the group of fractional ideals of $F$ which is prime to the conductor $\mathfrak f$, and $P_1(\mathfrak f)$ is the subgroup of $I_{F}(\mathfrak f)$ consisting of principal ideals generated by $\alpha \in F^{\times}$ such that $\alpha \equiv 1 \mod \mathfrak f$. For $\mathfrak a \in I_{F}(\mathfrak f)$, we denote by $[\mathfrak a]_{\mathfrak f} \in Cl_{F,\mathfrak f}$ the ray class of $\mathfrak a$. 
We denote by $\mathcal O_{F,\mathfrak f}^{\times}$ the kernel of the natural map $\mathcal O_F^{\times} \rightarrow (\mathcal O_F/\mathfrak f)^{\times}$. Then we have the following exact sequence:
\begin{align}
1 \rightarrow \mathcal O_{F,\mathfrak f}^{\times} \rightarrow \mathcal O_{F}^{\times} \rightarrow (\mathcal O_F/\mathfrak f)^{\times} \rightarrow Cl_{F,\mathfrak f} \rightarrow Cl_F \rightarrow 1,
\end{align}
where the middle map $(\mathcal O_F/\mathfrak f)^{\times} \rightarrow Cl_{F,\mathfrak f}$ sends $\bar{\alpha} \in (\mathcal O_F/\mathfrak f)^{\times}$ to the ray class of the principal ideal $[\alpha \mathcal O_F]_{\mathfrak f}$ ($\alpha \in \mathcal O_F$ is a lift) which is easily seen to be well-defined.

Let $\mathfrak a \in I_{F}(\mathfrak f)$ be an anti-integral ideal. Then
\begin{align}
\zeta_{F,\mathfrak f}(\mathfrak a^{-1},s) 
= \sum_{\substack{\mathfrak b \subset \mathcal O_F\\ \mathfrak b \in [\mathfrak a^{-1}]_{\mathfrak f}}} \frac{1}{N\mathfrak b ^s} 
= N\mathfrak a^s \sum_{ x\in (1+\mathfrak f \mathfrak a)/\mathcal O_{F,\mathfrak f}^{\times}} \frac{1}{N_{F/\mathbb Q}(x)^s}
\end{align}
is the partial zeta function of conductor $\mathfrak f$.

\subsection{Eisenstein series} 
Let $n \geq 2$ be an integer, and let $L \subset F^n$ be an $\mathcal O_F$-lattice. Let $v \in L$ be an element such that the annihilator ideal of $\bar{v}=(v \mod \mathfrak f) \in L/\mathfrak f L$ coincides with $\mathfrak f$, i.e., $\Ann_{\mathcal O_F}(\bar{v}) = \mathfrak f$. Define the level-$\mathfrak f$ congruence subgroup $\Gamma_{L,v}(\mathfrak f)$ as 
\begin{align}
\Gamma_{L,v}(\mathfrak f):= \Stab_{\Gamma_L}(v + \mathfrak f L).
\end{align}

\begin{dfn} 
\begin{enumerate}[{\rm (1)}]
\item For an anti-integral ideal $\mathfrak a\in I_{F}(\mathfrak f)$, we define the Eisenstein series with conductor $\mathfrak f$ as
\begin{align}
E_{\mathfrak f, \data{\mathfrak a}{L}, v}(z,s) := \sum_{x \in \mathcal O_{F,\mathfrak f}^{\times} \bs (v+\mathfrak f L)_{\mathfrak a}} \prod_{\sigma \in S_F} \frac{|\det g_{\sigma}|^{n_{\sigma}s}}{||x g_{\sigma}||^{nn_{\sigma}s}},
\end{align}
where $z=[g] \in \mathfrak h_F^n$, $\re{s}>1$, and we put $(v+\mathfrak f L)_{\mathfrak a}:=(v+\mathfrak f L) \cap L_{\mathfrak a}$.
\item For $\mathscr A \in Cl_{F,\mathfrak f}$, we define
\begin{align}
E_{\mathfrak f, \mathscr A, L, v}(z,s):= \sum_{\mathfrak a \in \mathscr A, \text{\rm anti-int.}} E_{\mathfrak f, \data{\mathfrak a}{L}, v}(z,s).
\end{align}
\end{enumerate}
\end{dfn}

\begin{lem}
We have 
\begin{align}
\frac{E_{\mathfrak f, \data{\mathfrak a}{L}, v}(z,s)}{N\mathfrak a^{ns}} =\frac{E_{\mathfrak f, \mathscr A, L, v}(z,s)}{\zeta_{F,\mathfrak f}(\mathfrak a^{-1},ns)}.
\end{align}
\end{lem}

\subsection{Fourier expansion formula}
Without loss of generality we assume $L=\mathfrak a_1 \oplus \cdots \oplus \mathfrak a_n \subset F^n$, where $\mathfrak a_i$ are anti-integral ideals in $I_F(\mathfrak f)$, and $v=e_n:=(0,\dots , 0,1)$. \marginpar{not obvious}

First we compute the Fourier coefficients of $E_{\mathfrak f, \mathscr A, L, v}(z,s)$ at the ``cusp" corresponding to the parabolic group associated to the data $\data{\mathfrak a_n}{L}$. Put, as before, 
\begin{align}
N&:=\mathfrak a_1^{-1}\mathfrak a_n \oplus \cdots \oplus \mathfrak a_{n-1}^{-1}\mathfrak a_n, \\
\Lambda^{(i)}&:=
\end{align}
\begin{dfn}
For $\nu \in N^{\vee}=\Lambda^{(1)}$ define
\begin{align}
\mathscr I_{\mathfrak f, \data{\mathfrak a}{L}, v, \nu}(z,s) &:=\frac{1}{vol(N\bs N(\mathbb R))} \int_{N\bs N(\mathbb R)} E_{\mathfrak f, \data{\mathfrak a}{L}, v}(nz,s) e^{2\pi i \braket{\nu, n}} dn, \\
\mathscr I_{\mathfrak f, \mathscr A, L, v, \nu}(z,s) &:=\frac{1}{vol(N\bs N(\mathbb R))}  \int_{N\bs N(\mathbb R)} E_{\mathfrak f, \mathscr A, L, v}(nz,s) e^{2\pi i \braket{\nu, n}} dn.
\end{align}
\end{dfn}

For an integral ideal $\mathfrak m \subset \mathfrak f$, a fractional $\mathcal O_F$-ideal $\mathfrak b \subset \mathfrak d_F^{-1}$, and a ray class $\mathscr A \in Cl_{F,\mathfrak f}$, we define the Ramanujan sum with conductor $\mathfrak f$ as
\begin{align}
\tau_{\mathfrak f, \mathscr A}(\mathfrak m, \mathfrak b):=
\sum_{\substack{x \in (\mathfrak b \mathfrak m^{-1}/\mathfrak b)^{\times}\\ x \mathfrak m\mathfrak b^{-1} \in \mathscr A}} e^{2\pi i Tr_{F/\mathbb Q}(x)}.
\end{align}

\if0
\begin{thm} Put $\mathscr I_{\nu}(z,s):= \mathscr I_{\mathfrak f, \data{\mathfrak a}{L}, v, \nu}(z,s)$ for simplicity. We have
\begin{multline}
(1) ~ \mathscr I_{0}(z,s) = 
\delta_{[\mathfrak a]_{\mathfrak f},[\mathfrak a_n]_{\mathfrak f}} \left(\frac{N \mathfrak a}{N \mathfrak a_n} \right)^{ns} \prod_{\sigma} |\det \overline g_{\sigma}|^{n_{\sigma}s} \\
+ \frac{h_F}{h_{F, \mathfrak f}}\frac{2^{r_2(F)} \pi^{\frac{d}{2}}}{\sqrt{|d_F|}} 
\frac{\Gamma_F(ns-1)}{\Gamma_F(ns)} \frac{N \mathfrak a}{N \mathfrak a_n} 
\sum_{\substack{\mathfrak m \subset \mathfrak f\\ \mathfrak m \neq 0}} \frac{\varphi (\mathfrak m)}{N \mathfrak m} 
\sum_{x' \in \mathcal O_F^{\times} \bs \overline{L}_{\mathfrak a \mathfrak m^{-1}}} \prod_{\sigma} \frac{|\det \overline{g}_{\sigma}|^{n_{\sigma}s}}{||x' \overline g_{\sigma}||^{nn_{\sigma}s-n_{\sigma}}}, 
\end{multline}
where $\delta_{[\mathfrak a]_{\mathfrak f},[\mathfrak a_n]_{\mathfrak f}}$ is the Kronecker delta, and
\begin{multline}
(2)  \sum_{\nu \in N^{\vee} - \{0\}} \mathscr I_{\nu}(z,s)= \frac{2^{d} \pi^{\frac{dns}{2}}}{\sqrt{|d_F|}} \left(\frac{N \mathfrak a}{N \mathfrak a_n} \right)^{ns}
\sum_{\substack{\mathfrak n \subset \mathcal O_F\\ \mathfrak n \neq 0}} N(\mathfrak n \mathfrak d_F^{-1})^{ns-1} \sum_{\substack{\mathfrak m \subset \mathfrak f\\ \mathfrak m \neq 0}} \frac{\tau_{\mathfrak f, [\mathfrak a \mathfrak a_n^{-1}]_{\mathfrak f}}(\mathfrak m, \mathfrak n \mathfrak d_F^{-1})}{N \mathfrak m ^{ns}} \\ 
\times \sum_{\nu \in (N^{\vee})_{\mathfrak n^{-1}}} e^{2 \pi i \braket{\nu, \mathbf x_{n}}}
\frac{K_F(ns-1,(2 \pi ||\nu \overline g_{\sigma}||)_{\sigma})}{\Gamma_F(ns)} 
\prod_{\sigma}\frac{|\det \overline g_{\sigma}|^{n_{\sigma}s}}{||\nu \overline g_{\sigma}||^{\frac{nn_{\sigma}s-n_{\sigma}}{2}}}.
\end{multline}
\end{thm}
\fi

\begin{thm}\label{fourier coeff thm}
\begin{enumerate}[{\rm (1)}] 
\item (Constant term, $\nu=0$) For $\re{s}>1$, we have
\begin{align}
&\frac{\mathscr I_{\mathfrak f,[\mathfrak a], L, v, 0}(z,s)}{\zeta_{F,\mathfrak f}(\mathfrak a^{-1},ns)}
= Z_{[\mathfrak a \mathfrak a_n^{-1}]_{\mathfrak f}}(\emptyset; \emptyset) (N \mathfrak a_n)^{-ns} \prod_{\sigma} |\det g^{(1)}_{\sigma}|^{n_{\sigma}s}  \\
&+ \frac{h_F}{h_{F,\mathfrak f}}\frac{2^{r_2(F)} \pi^{\frac{d}{2}}}{\sqrt{|d_F|} N \mathfrak a_n} \frac{\Gamma_F(ns-1)}{\Gamma_F(ns)} 
\prod_{\sigma} |\det g^{(1)}_{\sigma}|^{n_{\sigma}\frac{1-s}{n-1}}
\sum_{\substack{\mathfrak m \subset \mathfrak f\\ \mathfrak m \neq 0}} \frac{\varphi (\mathfrak m)}{N \mathfrak m^{ns}} 
\frac{E_{L^{(1)},[\mathfrak a \mathfrak m^{-1}]}([g^{(1)}],\frac{ns-1}{n-1})}{\zeta_F(\mathfrak a^{-1}\mathfrak m,ns-1)},\nonumber
\end{align}
where $g^{(1)}_{\sigma}$ is the $\sigma$-component of $g^{(1)}$, and we identify $[g^{(1)}]=([g^{(1)}_{\sigma}])_{\sigma} \in \mathfrak h_F^{n-1}$, and $E_{L^{(1)},[\mathfrak a \mathfrak m^{-1}]}([g^{(1)}],s)$ is the Eisenstein series on $\mathfrak h_F^{n-1}$ with the trivial conductor.
\item (Non-constant terms, $\nu \neq 0$) For $\re{s}>1$, we have
\begin{multline}
\sum_{\nu \in N^{\vee} - \{0\}} \frac{\mathscr I_{\mathfrak f,\mathfrak [a], L, v, \nu}(z,s)}{\zeta_{F,\mathfrak f}(\mathfrak a^{-1},ns)} = \frac{2^{d} \pi^{\frac{dns}{2}}}{\sqrt{|d_F|}(N \mathfrak a_n)^{ns}} 
\sum_{\substack{\mathfrak n \subset \mathcal O_F\\ \mathfrak n \neq 0}} N(\mathfrak n \mathfrak d_F^{-1})^{ns-1} \sum_{\substack{\mathfrak m \subset \mathfrak f\\ \mathfrak m \neq 0}} \frac{\tau_{\mathfrak f, [\mathfrak a \mathfrak a_n^{-1}]_{\mathfrak f}}(\mathfrak m, \mathfrak n \mathfrak d_F^{-1})}{N \mathfrak m ^{ns}}  \\
\times \sum_{\nu \in (N^{\vee})_{\mathfrak n^{-1}}} e^{2 \pi i \braket{\nu, \mathbf x_{n}}}
\frac{K_F(ns-1,(2 \pi ||\nu g^{(1)}_{\sigma}||)_{\sigma})}{\Gamma_F(ns)} 
\prod_{\sigma}\frac{|\det g^{(1)}_{\sigma}|^{n_{\sigma}s}}{||\nu g^{(1)}_{\sigma}||^{\frac{n_{\sigma}}{2}(ns-1)}}.
\end{multline}
\end{enumerate}
\end{thm}

In order to extend the Fourier expansion formula, we modify the definitions of $\varPhi_i(z,s)$ and $\varPsi_i(z,s)$ as follows. First, for $\mathscr A \in Cl_F$, $\mathfrak b \subset \mathfrak d_F^{-1}$ and $1\leq k\leq n-1$, we set
\begin{align}
Z_{\mathfrak f,\mathscr A, k}(s)&:= \frac{h_F}{h_{F,\mathfrak f}}
\sum_{\substack{\mathfrak m_1,\dots, \mathfrak m_k \subset \mathcal O_F\\\mathfrak m_1 \subset \mathfrak f, ~\mathfrak m_1\cdots \mathfrak m_k \in \mathscr A}} 
\prod_{i=1}^k \frac{\varphi(\mathfrak m_i)}{N\mathfrak m_i^{s-i+1}}, \\
\tilde{Z}_{\mathfrak f,\mathscr A, k}(s; \mathfrak b)&:= \frac{h_F}{h_{F,\mathfrak f}}
\sum_{\substack{\mathfrak m_1,\dots, \mathfrak m_k, \mathfrak m \subset \mathcal O_F\\ \mathfrak m_1 \subset \mathfrak f, ~\mathfrak m_1\cdots \mathfrak m_k \mathfrak m\mathfrak b^{-1} \in \mathscr A}} 
\prod_{i=1}^k \frac{\varphi(\mathfrak m_i)}{N\mathfrak m_i^{s-i+1}} \frac{\tau(\mathfrak m, \mathfrak b)}{N\mathfrak m^{s-k}}.
\end{align}
For a character $\chi \in \Hom(Cl_F, \mathbb C^{\times})$ and $1\leq k \leq n-1$ we set
\begin{align}
Z_{\mathfrak f,k}(s, \chi) := \sum_{\mathscr A \in Cl_F} \chi(\mathscr A) Z_{\mathfrak f,\mathscr A, k}(s),\quad
\tilde{Z}_{\mathfrak f,k}(s; \mathfrak b, \chi)
:= \sum_{\mathscr A \in Cl_F} \chi(\mathscr A) \tilde{Z}_{\mathfrak f,\mathscr A, k}(s; \mathfrak b).
\end{align}
Furthermore, for $\mathscr A \in Cl_{F,\mathfrak f}$ and $\mathfrak b \subset \mathfrak d_F^{-1}$, we set
\begin{align}
Z_{\mathfrak f,\mathscr A, 0}(s):= \delta_{\mathscr A, [\mathcal O_F]_{\mathfrak f}}, \quad
\tilde{Z}_{\mathfrak f,\mathscr A, 0}(s; \mathfrak b):=
\sum_{\mathfrak m \subset \mathfrak f} 
\frac{\tau_{\mathfrak f, \mathscr A}(\mathfrak m, \mathfrak b)}{N\mathfrak m^{s}}.
\end{align}
For a character $\chi \in \Hom(Cl_{F,\mathfrak f}, \mathbb C^{\times})$, we set
\begin{align}
\tilde{Z}_{\mathfrak f,0}(s; \mathfrak b, \chi)
:= \sum_{\mathscr A \in Cl_{F,\mathfrak f}} \chi(\mathscr A) \tilde{Z}_{\mathfrak f,\mathscr A, 0}(s; \mathfrak b).
\end{align}.

\begin{dfn} Let $c_i(z,s), d_i(z,s)$ be the same as in Section \ref{}.
\begin{enumerate}[{\rm (1)}]
\item For $0\leq i \leq n-1$, $\re{s}>1$, we define
\begin{align}
\varPhi_{\mathfrak f, i}(z,s) := c_i(z,s) Z_{\mathfrak f, [\mathfrak a\mathfrak a_{n-i}^{-1}], i}(ns).
\end{align}
\item For $0\leq i \leq n-2$, $\re{s}>1$, we define
\begin{multline}
\varPsi_{\mathfrak f, i} (z,s) :=  
d_i(z,s) \sum_{\substack{\mathfrak n \subset \mathcal O_F\\ \mathfrak n \neq 0}} N(\mathfrak n \mathfrak d_F^{-1})^{ns-i-1} 
\tilde{Z}_{\mathfrak f, [\mathfrak a \mathfrak a_{n-i}^{-1}], i} (ns; \mathfrak n \mathfrak d_F^{-1}) \\
\times \sum_{\nu \in (\Lambda^{(i+1)})_{\mathfrak n^{-1}}} e^{2 \pi i \braket{\nu, \mathbf x_{n-i}}}
\frac{K_F(ns-i-1,(2 \pi ||\nu g^{(i+1)}_{\sigma}||)_{\sigma})}{\prod_{\sigma}||\nu g^{(i+1)}_{\sigma}||^{\frac{n_{\sigma}}{2}(ns-i-1)}}.
\end{multline}
\end{enumerate}
\end{dfn}


\begin{thm} 
\begin{enumerate}[{\rm (1)}]
\item We obtain
\begin{align}
\frac{E_{\mathfrak f, [\mathfrak a], L, v}(z,s)}{\zeta_{F,\mathfrak f}(\mathfrak a^{-1},ns)}= \sum_{i=0}^{n-2}(\varPhi_{\mathfrak f, i}(z,s)+\varPsi_{\mathfrak f, i}(z,s))+\varPhi_{\mathfrak f, n-1}(z,s).
\end{align}
Furthermore, for $0 \leq i \leq n-2$, $\varPhi_{\mathfrak f, i}(z,s)$ and $\varPsi_{\mathfrak f, i}(z,s)$ can be continued to holomorphic functions for $s \in \mathbb C$, and $\varPhi_{\mathfrak f, n-1}(z,s)$ can be continued to a meromorphic function for $s \in \mathbb C$ which has a simple pole at $s=1$. 
\item We have 
\begin{align}
E_{\mathfrak f, [\mathfrak a], L, v}^{(-1)}(z,1) = c_{n-1}(z,1)\frac{\kappa_F}{n h_{F,\mathfrak f}} \frac{\varphi(\mathfrak f)}{N\mathfrak f^n}\frac{\zeta_{F,\mathfrak f}(\mathfrak a^{-1},n)}{\zeta_{F,\mathfrak f}(n)}.
\end{align}
\end{enumerate}
\end{thm}
\fi

\subsection*{Acknowledgements}
I would like to express my deepest gratitude to Professor Takeshi Tsuji for the constant encouragement and valuable comments during the study. 
I would also like to thank Kazuki Hiroe for pointing to me about the Hecke's result in the case of general number fields \cite{hecke17}. 
Part of this paper is written during my stay at the Max Planck Institute for Mathematics in 2018. The author is supported by JSPS Overseas Challenge Program for Young Researchers.

{\small
\bibliographystyle{alpha}
\bibliography{hecke's_integral_formula}

}

{\small Graduate School of Mathematical Sciences, The University of Tokyo, 3-8-1 Komaba, Meguro, Tokyo, 153-8914 Japan

{\it Email}:  bekki@ms.u-tokyo.ac.jp}


\end{document}